\newcommand{\lsp}{\operatorname{span}}
\newcommand{\coker}{\operatorname{coker}}
\newtheorem{theorem}{Theorem}[section]
\newtheorem{lemma}[theorem]{Lemma}
\newtheorem{proposition}[theorem]{Proposition}
\newtheorem{corollary}[theorem]{Corollary}
\newtheorem*{mconj}{Matui's HK Conjecture}
\theoremstyle{definition}
\newtheorem{definition}[theorem]{Definition}
\newtheorem{remark}[theorem]{Remark}
\newtheorem{remarks}[theorem]{Remarks}
\newtheorem{example}[theorem]{Example}
\newtheorem{examples}[theorem]{Examples}
\newtheorem{question}[theorem]{Question}
\numberwithin{equation}{section}
\newcommand{\R}{\mathbb{R}}
\DeclareMathOperator{\Aut}{Aut}
\newcommand{\ima}{\text{im}\,}
\newcommand{\bi}{\begin{itemize}}
\newcommand{\ei}{\end{itemize}}
\newcommand{\be}{\begin{enumerate}}
\newcommand{\ee}{\end{enumerate}}
\newcommand{\N}{\mathbb{N}}
\newcommand{\Z}{\mathbb{Z}}
\newcommand{\Q}{\mathbb{Q}}
\newcommand{\C}{\mathbb{C}}
\newcommand{\T}{\mathbb{T}}
\newcommand{\F}{\mathbb{F}}
\newcommand{\HH}{\mathcal{H}}
\newcommand{\G}{\mathcal{G}}
\newcommand{\M}{\mathfrak{M}}
\newcommand{\id}{\operatorname{id}}
\newcommand\cyr
\renewcommand\rmdefault{wncyr}
\renewcommand\sfdefault{wncyss}
\renewcommand\encodingdefault{OT2}
\DeclareTextFontCommand{\textcyr}{\cyr}
\title[Ample groupoid homology]{Ample groupoids: Equivalence, Homology, \\ and Matui's HK conjecture}
\author[C. Farsi]{Carla Farsi}
\address[C.F.]{Department of Mathematics, Campus Box 395 \\
Boulder, Colorado 80309-0395, USA}
\email{farsi@colorado.edu}
 \author[A. Kumjian]{Alex Kumjian}
 \address[A.K]{Department of Mathematics (084), University of Nevada\\
 Reno, NV 89557-0084, USA}
 \email{alex@unr.edu}
\author[Pask, Sims]{David Pask and Aidan Sims}
\address[D.P, \&A.S.]{School of Mathematics and Applied Statistics\\
University of Wollongong\\
NSW  2522\\
Australia}
\email{dpask@uow.edu.au, asims@uow.edu.au}
\thanks{C.F.\ was partially supported by Simons Foundation Collaboration grant 523991.
A.K.\ was partially supported  by  Simons Foundation Collaboration grant 353626.
This research was supported by the Australian Research Council.}
\date{\today}
\keywords{Ample groupoid, groupoid equivalence,  homology of \'etale groupoids, K-theory of $C^*$-algebras, Deaconu-Renault groupoid}
\subjclass[2010]{Primary 37B05, 22A22, 46L80, 19D55}
\begin{document}

\begin{abstract}
We investigate the homology of ample Hausdorff groupoids. We establish that a number of
notions of equivalence of groupoids appearing in the literature coincide for ample
Hausdorff groupoids, and deduce that they all preserve groupoid homology. We compute the
homology of a Deaconu--Renault groupoid associated to $k$ pairwise-commuting local
homeomorphisms of a zero-dimensional space, and show that Matui's HK conjecture holds for
such a groupoid when $k$ is one or two. We specialise to $k$-graph groupoids, and show
that their homology can be computed in terms of the adjacency matrices, using a chain
complex developed by Evans. We show that Matui's HK conjecture holds for the groupoids of
single vertex $k$-graphs which satisfy a mild joint-coprimality condition. We also prove
that there is a natural homomorphism from the categorical homology of a $k$-graph to the
homology of its groupoid.
\end{abstract}

\maketitle

\section{Introduction}\label{sec:intro}

The purpose of this paper is to investigate the homology of ample Hausdorff groupoids,
and to investigate Matui's HK-conjecture for groupoids associated to actions of $\N^k$ by
local homeomorphisms on locally compact Hausdorff zero-dimensional spaces. Ample
Hausdorff groupoids are an important source of examples of $C^*$-algebras. They provide
models for the crossed-products associated to Cantor minimal systems \cite{GPS},
Cuntz--Krieger algebras and graph $C^*$-algebras and their higher-rank analogues
\cite{Renault, KPRR, KP2000}, and recently models for large classes of classifiable
$C^*$-algebras \cite{BCSS, DPS, Putnam}. It is therefore very desirable to develop
techniques for computing the $K$-theory of the $C^*$-algebra of an ample Hausdorff
groupoid. Unfortunately, there are relatively few general techniques available.

In a series of recent papers \cite{Matui-2012, Matui-2015, Matui-2016}, Matui has
advanced a conjecture that if $\G$ is a minimal effective ample Hausdorff groupoid with
compact unit space then $K_0(C^*_r(G))$ is isomorphic to the direct sum of the even
homology groups $H_{2n}(\G)$ of the groupoid as defined by Crainic and Moerdijk
\cite{CM}, and $K_1(C^*_r(\G))$ is isomorphic to the direct sum of the odd homology
groups $H_{2n+1}(\G)$. He has verified this conjecture for a number of key classes of
groupoids, including finite cartesian products of groupoids associated to shifts of
finite type, transformation groupoids for Cantor minimal systems, and AF groupoids with
compact unit spaces. He has also developed tools for computing the homology of ample
Hausdorff groupoids, including a spectral sequence that relates the homology of a
groupoid $\G$ endowed with a cocycle $c$ taking values in a discrete abelian group $H$
with the homology of $H$ with values in the homology groups of the skew-product groupoid
$\G \times_c H$. Other authors have subsequently verified Matui's conjecture for
Exel-Pardo groupoids and certain graded ample Hausdorff groupoids (see \cite{ortega}
\cite{HazLi}).

We begin Section~\ref{sec:equivalence} by investigating the many notions of groupoid
equivalence in the literature in the context of arbitrary ample Hausdorff groupoids.
Crainic and Moerdijk focus on the notion of Morita equivalence of groupoids (see
\cite[4.5]{CM}) while Matui employs the notions of similarity (see  \cite[Definition
3.4]{Matui-2012}) and Kakutani equivalence (see  \cite[Definition 4.1]{Matui-2012}).
Similarity of groupoids was previously studied by Renault \cite{Renault} and Ramsay
\cite{Ramsay:AM1971}. In the setting of ample Hausdorff groupoids with $\sigma$-compact
unit spaces, it follows from \cite[Theorem~3.6]{Matui-2012} that Kakutani equivalence
implies similarity. We show that similarity, Kakutani equivalence, Renault's notion of
groupoid equivalence \cite{Ren82, MRW},  and the notion of  groupoid Morita equivalence
of Crainic and Moerdijk (as well as a number of other notions) all coincide for ample
Hausdorff groupoids with $\sigma$-compact unit spaces (see
Theorem~\ref{thm:equivalences}).

In Section~\ref{sec:gpd homology} we recall the definition of homology for an arbitrary
ample Hausdorff groupoid from \cite[3.1]{CM} (see also  \cite[Definition
3.1]{Matui-2012}) and we appeal to a theorem of Crainic and Moerdijk to observe that
groupoid equivalence preserves groupoid homology for arbitrary ample Hausdorff groupoids
(see  \cite[Corollary~4.6]{CM}). Matui also proved that similar Hausdorff \'etale
groupoids have isomorphic homology groups (see  \cite[Proposition 3.5]{Matui-2012}), and
this formulation allows us to give an explicit description of the isomorphism when the
equivalence arises from a similarity. In Section~\ref{sec:HK} we introduce Matui's HK
conjecture, and extend his proof that AF groupoids with compact unit space satisfy the HK
conjecture to the case of non-compact unit spaces.

Our main computations of groupoid homology are in Section~\ref{sec:DR groupoids}, where
we investigate the homology of Deaconu--Renault groupoids $\G(X, \sigma)$ associated to
actions $\sigma$ of $\N^k$ by local homeomorphisms on totally disconnected locally
compact Hausdorff spaces $X$. We adapt techniques developed by Evans \cite{evans} in the
context of $K$-theory for higher-rank graph $C^*$-algebras to construct a chain complex
$A^\sigma$ in which the $n$-chains are elements of $\bigwedge^n \Z^k \otimes C_c(X, \Z)$
and the boundary maps are built from the forward maps $\sigma^n_*$ on $C_c(X, \Z)$ that
satisfy $\sigma^n_*(1_U) = 1_{\sigma^n(U)}$ whenever $U \subseteq X$ is a compact open
set on which $\sigma^n$ is injective. Our main result, Theorem~\ref{thm:main H*
computation}, gives an explicit computation of the homology groups $H_n(\G(X, \sigma))$:
we prove that $H_n(\G(X, \sigma))$ is canonically isomorphic to $H_n(A^\sigma_*)$. We
then show that, if $c : \G(X, \sigma) \to \Z^k$ is the canonical cocycle, then the
homology groups $H_*(A^\sigma)$ also coincide with the homology groups $H_*(\Z^k,
K_0(C^*(\G(X, \sigma) \times_c \Z^k)))$ appearing in Kasparov's spectral sequence for the
double crossed product $C^*(\G(X, \sigma) \times_c \Z^k) \rtimes \Z^k$. Since this double
crossed product is Morita equivalent to $C^*(\G(X, \sigma))$ by Takai duality, this
provides a useful tool for calculating the $K$-theory of $C^*(G(X, \sigma))$. In Theorems
\ref{thm:DR k=1}~and~\ref{thm:DR k=2} we calculate both the $K$-groups of $C^*(G(X,
\sigma))$ and the homology groups of $G(X, \sigma)$ explicitly for $k = 1, 2$, and in
particular prove that ample Deaconu--Renault groupoids of rank at most~2 satisfy Matui's
HK conjecture. We also discuss the differences between Kasparov's spectral sequence and
Matui's for $k \ge 3$ and indicate where one might look for counterexamples to Matui's
conjecture amongst such groupoids.

Finally, in Section~\ref{sec:k-graphs}, we specialise to $k$-graphs. The $k$-graph
groupoid $\G_\Lambda$ of a row-finite $k$-graph $\Lambda$ with no sources is precisely
the Deaconu--Renault groupoid $\G(\Lambda^\infty, \sigma)$ associated to the shift maps
on the infinite-path space of $\Lambda$. We begin the section by linking the homology of
the $k$-graph groupoid with the categorical homology of the $k$-graph by constructing a
.
natural homomorphism from $H_*(\Lambda)$ to $H_*(\G_\Lambda)$. We then investigate how to
apply the results of Section~\ref{sec:DR groupoids} in the specific setting of $k$-graph
groupoids. We prove that the chain complex $A^\sigma_*$ associated to $(\Lambda^\infty,
\sigma)$ as in Section~\ref{sec:DR groupoids} has the same homology as the much simpler
chain complex $D^\Lambda_*$ described by Evans in \cite{evans}. This provides a very
concrete calculation of the homology of a $k$-graph groupoid. It follows that the
homology of $\G_\Lambda$ does not depend on the factorisation rules in $\Lambda$. We use
this and the preceding section to establish an explicit description of the homology of
$1$-graph groupoids and $2$-graph groupoids and to see that these groupoids satisfy
Matui's conjecture. We also prove that for arbitrary $k$, if $\Lambda$ is a $k$-graph
with a single vertex such that the integers $|\Lambda^{e_1}| - 1, \dots, |\Lambda^{e_k}| -
1$ have no nontrivial common divisors, then both the homology of $\G_\Lambda$ and the $K$-theory of
$C^*(\Lambda)$ are trivial, and in particular $\G_\Lambda$ satisfies the HK-conjecture.

\smallskip

\textbf{Acknowledgements:} C.F.\ thanks A.K.\ for his hospitality during her visit to UNR.
A.K.\ thanks his co-authors for their hospitality and support on his visits to Boulder and
Wollongong.

\section{Background}\label{s:definitions}

\subsection{Groupoids and their \texorpdfstring{$C^*$}{C*}-algebras}

We give some brief background on groupoids and their $C^*$-algebras and establish our
notation. For details, see \cite{Exel, Renault, Sims}. A groupoid is a small category
$\G$ with inverses. We write $\G^{(0)}$ for the set of identity morphisms of $\G$, called
the unit space, and we write $r, s : \G \to \G^{(0)}$ for the range and source maps. We
write $\G^{(2)}$ for the set $\{ (\gamma_1 , \gamma_2 ) \in \G \times \G : s ( \gamma_1 )
= r ( \gamma_2 ) \}$ of composable pairs in $\G$. The groupoid $\G$ is a topological
groupoid if it has a locally compact topology under which all operations in $\G$ are
continuous and $\G^{(0)}$ is Hausdorff in the relative topology. If the topology on all
of $\G$ is Hausdorff, we call $\G$ a Hausdorff groupoid. An \'etale groupoid is a
topological groupoid in which $\G^{(0)}$ is open, and $r,s : \G \to \G^{(0)}$ are local
homeomorphisms (in \cite{Renault} such a groupoid is called $r$-discrete with Haar
system). An open subset $U \subseteq \G$ is said to be an \emph{open bisection} if both
$r|_U$ and $s|_U$ are homeomorphisms onto their ranges.
 Given $u \in \G^{(0)}$ we write $\G^u$ for
$\{\gamma \in \G : r(\gamma) = u\}$, $\G_u$ for $\{\gamma \in \G : s(\gamma) = u\}$ and
$\G^u_u = \G^u \cap \G_u$.

A groupoid $\G$ is \emph{ample} if it is \'etale and $\G^{(0)}$ is zero dimensional;
equivalently, $\G$ is ample if it has a basis of compact open bisections.

The \emph{orbit} of a unit $u \in \G^{(0)}$ is the set $[u] := r(\G_u) = s(\G^u)$. A
subset $U \subseteq \G^{(0)}$ is full if $U \cap [u] \not=\emptyset$ for every unit $u$.
We say that $\G$ is \emph{minimal} if the only nontrivial open invariant subset of
$\G^{(0)}$ is $\G^{(0)}$; equivalently, $\G$ is minimal if the closure of $[u]$ is equal to $\G^{(0)}$
for every $u \in \G^{(0)}$.

The \emph{isotropy} of $\G$ is the set $\bigcup_{u \in \G^{(0)}} \G^u_u$ of elements of
$\G$ whose range and source coincide. A groupoid $\G$  is said to be
\emph{effective}\footnote{Matui uses the term \emph{essentially principal}, and the term
\emph{topologically principal} has also been used elsewhere in the literature.} if the
interior of its isotropy coincides with its unit space $\G^{(0)}$

Let $A$ be an abelian group and let $c : \G \to A$ be a $1$-cocycle. Then we may form the
\emph{skew product} groupoid $\G \times_c A$ which is the set $\G \times A$ with
structure maps $r(\gamma, a) = (r(\gamma), a)$, $s(\gamma, a) = (s(\gamma), a +
c(\gamma))$ and $(\gamma, a)(\eta, a+c(\gamma)) = (\gamma\eta, a)$ (see \cite[Definition
I.1.6]{Renault}). There is a natural action $\alpha$ of $A$ on $\G \times_c A$ given by
$\alpha_b( \gamma , a ) = ( \gamma , a+b )$.

Given a Hausdorff \'etale groupoid $\G$, the space $C_c(\G)$ of continuous compactly
supported functions from $\G$ to $\C$ becomes a $*$-algebra with operations given by
\[
    (f*g)(\gamma) = \sum_{\gamma = \gamma_1\gamma_2} f(\gamma_1)g(\gamma_2)\quad\text{ and }\quad f^*(\gamma) = \overline{f(\gamma^{-1})}.
\]
The \emph{groupoid $C^*$-algebra} $C^*(\G)$ is the universal $C^*$-algebra generated by a
$^*$-representation of $C_c(\G)$ (cf.\ \cite[II.1.5]{Renault}). For each unit $u \in
\G^0$ there is a $^*$-representation $\pi_u : C_c(\G) \to \mathcal{B}(\ell^2(\G_u))$
given by $\pi_u(f)\delta_\eta = \sum_{\gamma \in \G_{r(\eta)}} f(\gamma)\delta_{\gamma\eta}$. The reduced
$C^*$-algebra $C^*_r(\G)$ is the closure of the image of $C_c(\G)$ under the
representation $\bigoplus_{u \in \G^{(0)}} \pi_u$.

A cocycle $c : \G \to \Z^k$ determines an action of $\T^k$ by automorphisms of $C_c(\G)$
given by $(z \cdot f)(\gamma) = z^{c(\gamma)}f(\gamma)$, and this extends to an action of $\T^k$ by
automorphisms on each of $C^*(\G)$ and $C^*_r(\G)$. There is an isomorphism $C^*(\G
\times_c \Z^k) \cong C^*(\G) \rtimes \T^k$ that carries a function $f \in C_c(\G \times
\{n\}) \subseteq C_c(\G \times_c \Z^k)$ to the function $z \mapsto (g \mapsto
z^n f(g, n)) \in C ( \T^k , C^* ( \G ) ) \subseteq C^* ( \G ) \rtimes \T^k$. This isomorphism descends to an isomorphism $C_r^*(\G \times_c \Z^k) \cong C_r^*(\G) \rtimes \T^k$.

We will be particularly interested in Deaconu--Renault groupoids associated to actions of
$\N^k$, which are defined as follows. Let $X$ be a locally compact Hausdorff space, and
let $\sigma$ be an action of $\N^k$ on $X$ by surjective local homeomorphisms. The
associated Deaconu--Renault groupoid\footnote{sometimes referred to as a transformation
groupoid (see \cite{exel-renault})} $\G = \G(X, \sigma)$ is defined by
\[
\G = \{(x, p-q, y) \in X \times \Z^k \times X : \sigma^p(x) = \sigma^q(y)\}
\]
(cf.\ \cite{Deaconu-trans,exel-renault}).
We identify $X$ with the unit space via the map $x \mapsto (x, 0, x)$.
The structure maps are given by $r(x, n, y) = x$, $s(x, n, y) = y$ and $(x, m, y) (y, n, z) = (x, m + n, z)$.
A basis for the topology on $\G$ is given by subsets of the form $U \times \{ p-q \} \times V$ where
$U,V$ are open in $X$ and $\sigma^p(U) = \sigma^q(V)$.

There is a natural cocycle $c : \G(X, \sigma) \to \Z^k$ given by $c(x, n, y) := n$.  We
can then form the skew-product groupoid $\G \times_c \Z^k$. With our conventions, in this
groupoid we have $r ( (x,n,y),p)= (x, p)$, $s ((x,n,y),p) = (y, p + n)$ and
$((x,n,y),p)((y,m,z),p+n)=((x,m+n,z),p)$.

There is an action $\widetilde\sigma$ of $\N^k$ on $\widetilde{X} = X \times \Z^k$ by surjective
local homeomorphisms given by $\widetilde\sigma^q (x,p) = (\sigma^q (x), p+q)$.
Moreover there is an isomorphism $\G \times_c \Z^k \cong \G( \widetilde{X}  , \widetilde\sigma)$
given by
\begin{equation}\label{eq:skew groupoid iso}
((x, m, y), p) \mapsto ((x, p), m, (y, p + m)).
\end{equation}

The full and reduced $C^*$-algebras of a Deaconu--Renault groupoid coincide (see for
example \cite[Lemma~3.5]{SW}).

\subsection{\texorpdfstring{$k$}{k}-graphs, their path groupoids, and their \texorpdfstring{$C^*$}{C*}-algebras} \label{sec:graphbasics}

For $k \ge 1$, a \emph{$k$-graph} is a non-empty
countable small category equipped with a functor $d : \Lambda \to \N^k$ that satisfies
the following \emph{factorisation property}. For all $\lambda \in \Lambda$ and $m,n\in
\N^k$ such that $d(\lambda) = m + n$ there exist unique $\mu, \nu\in \Lambda$ such that
$d(\mu) = m$, $d(\nu) = n$, and $\lambda = \mu\nu$. When $d(\lambda) = n$ we say
$\lambda$ has \emph{degree} $n$, and we write $\Lambda^n = d^{-1}(n)$. The standard
generators of $\N^k$ are denoted $\varepsilon_1, \dots ,\varepsilon_k$, and we write $n_i$ for the
$i^{\textrm{th}}$ coordinate of $n \in \N^k$. We define a partial
order on $\N^k$ by $m\leq n$ if $m_i\leq n_i$ for all $i\leq k$.

If $\Lambda$ is a $k$-graph, its \emph{vertices} are the elements of $\Lambda^0$. The
factorisation property implies that these are precisely the identity morphisms, and so
can be identified with the objects. For $\lambda \in \Lambda$ the \emph{source}
$s(\lambda)$ is the domain of $\lambda$, and the \emph{range} $r(\lambda)$ is the
codomain of $\lambda$ (strictly speaking, $s(\lambda)$ and $r(\lambda)$ are the identity
morphisms associated to the domain and codomain of $\lambda$). Given $\lambda,\mu\in
\Lambda$ and $E\subseteq \Lambda$, we define
\begin{gather*}
\lambda E=\{\lambda\nu : \nu\in E, r(\nu)=s(\lambda)\},
\quad E\mu=\{\nu\mu : \nu\in E, s(\nu)=r(\mu)\},\qquad\text{ and}\\
\lambda E\mu=(\lambda E)\mu=\lambda(E\mu).
\end{gather*}
In particular, for $v \in \Lambda^0$ and $n \in \N^k$, we have $v\Lambda^n = \{\lambda
\in \Lambda^n : r(\lambda) = v\}$.

We say that the $k$-graph $\Lambda$ is \textit{row-finite} if $|v{\Lambda}^{n}|<\infty$
 for each $n\in\mathbb{N}^k$ and $v\in{\Lambda}^0$, and has \textit{no sources}
if $0<|v{\Lambda}^{m}|$ for all $v\in{\Lambda}^0$ and $m\in\mathbb{N}^k$.

Let $A$ be an abelian group. Given a functor $c : \Lambda \to A$, we may form the
skew-product $k$-graph $\Lambda \times_c A$ which is the set $\Lambda \times A$ endowed
with structure maps given by $r(\lambda, a) = (r(\lambda), a)$, $s(\lambda, a) =
(s(\lambda), a + c(\lambda))$, $(\lambda, a)(\mu, a+c(\lambda)) = (\lambda\mu, a)$, and
$d ( \lambda, a)= d(\lambda)$ (see \cite[Definition 5.1]{KP2000}). There is a natural
$A$-action $\alpha$ on $\Lambda \times_c A$ given by $\alpha_b( \lambda , a) = ( \lambda
, a+b )$.

\begin{examples}
\begin{enumerate}[(a)]
\item A $1$-graph is the path category of a directed graph (see \cite{KP2000}).
\item Let $\operatorname{Mor} \Omega_k = \{ (m,n) \in \mathbb{N}^k \times \mathbb{N}^k : m \le n \}$, and $\operatorname{Obj} \Omega_k = \mathbb{N}^k$. Define $r,s : \operatorname{Mor} \Omega_k \to
\operatorname{Obj} \Omega_k$ by $r (m,n) = m$,  $s (m,n) = n$,
and for $m \le m \le p \in \N^k$ define $(m,n)(n,p)=(m,p)$ and $d (m,n) = n-m$.
Then $( \Omega_k , d )$ is a $k$-graph. We identify
$\operatorname{Obj} \Omega_k$ with $\{ (m,m) : m \in \mathbb{N}^k \}
\subseteq \operatorname{Mor} \Omega_k$.
\end{enumerate}
\end{examples}

Let $\Lambda$ be a row-finite $k$-graph with no sources. The set $\Lambda^\infty = \{ x :
\Omega_k \to \Lambda \mid x \text{ is a degree-preserving functor}\} $ is called the
\textit{infinite path space} of $\Lambda$. For $v \in \Lambda^0$, we put $v
\Lambda^\infty = \{ x \in \Lambda^\infty : x(0,0)=v \}$.

For $\lambda \in \Lambda$, let $Z ( \lambda ) = \{ x \in \Lambda^\infty : x ( 0 , d (
\lambda ) ) = \lambda \}$. Then $\{ Z ( \lambda ) : \lambda \in \Lambda \}$ forms a basis
of compact open sets for a topology on $\Lambda^\infty$. For $p \in \mathbb{N}^k$, the
shift map $\sigma^p : \Lambda^\infty \to \Lambda^\infty$ defined by
$\big(\sigma^p(x))(m,n) = x(m+p,n+p)$ for $x \in \Lambda^\infty$ is a local homeomorphism
(for more details see \cite[Remark 2.5, Lemma 2.6]{KP2000}).

Following \cite[Definition 2.7]{KP2000} we define the $k$-graph groupoid of $\Lambda$ to
be the Deaconu--Renault groupoid
\begin{equation} \label{eq:dr-kgraph}
\G_\Lambda := \G (\Lambda^\infty, \sigma) =
    \{(x, m-n, y) \in \Lambda^\infty \times \Z^k \times \Lambda^\infty : m,n \in \N^k, \sigma^m(x) = \sigma^n(y)\}.
\end{equation}

The sets $Z(\mu, \nu) := \{(\mu x, d(\mu) - d(\nu), \nu x) : x \in Z(s(\mu))\}$ indexed
by pairs $\mu,\nu \in \Lambda$ with $s(\mu) = s(\nu)$ form a basis of compact open
bisections for a locally compact Hausdorff topology on $\G_\Lambda$. With this topology
$\G_\Lambda$ is an ample Hausdorff groupoid (see \cite[Proposition~2.8]{KP2000}). The
sets $Z(\lambda) = Z(\lambda, \lambda)$ form a basis for the relative topology on
$\G_\Lambda^{(0)} \subseteq \G_\Lambda$. We identify $\G_\Lambda^{(0)} = \{ (x,0,x) : x
\in \Lambda^\infty \}$ with $\Lambda^\infty$. The groupoid $\G_\Lambda$ is minimal if and
only if $\Lambda$ is cofinal \cite[Proof of Proposition~4.8]{KP2000}.

As in the proof of \cite[Theorem~5.2]{KP2000}, there is a bijection between
$\Lambda^\infty \times \Z^k$ and $(\Lambda \times_d \Z^k)^\infty$ given by $(x, p)
\mapsto ((m,n) \mapsto (x(m,n), m+p))$. After making this identification, we obtain an
isomorphism of the groupoid $\G_{\Lambda \times_d \Z^k}$ of the skew-product $k$-graph
with the skew-product groupoid $\G_\Lambda \times_c \Z^k$ corresponding to the canonical
cocycle $c(x, n, y) = n$ via the map $((x,p), m-n, (y,q)) \mapsto ((x, m-n,y), p)$.

Let $\Lambda$ be a row-finite $k$-graph with no sources. A \emph{Cuntz--Krieger
$\Lambda$-family} in a $C^*$-algebra $B$ is a function $s : \lambda \mapsto s_\lambda$
from $\Lambda$ to $B$ such that
\begin{itemize}
\item[(CK1)] $\{s_v : v\in \Lambda^0\}$ is a collection of mutually orthogonal
    projections;
\item[(CK2)] $s_\mu s_\nu = s_{\mu\nu}$ whenever $s(\mu) = r(\nu)$;
\item[(CK3)] $s^*_\lambda s_\lambda = s_{s(\lambda)}$ for all $\lambda\in \Lambda$;
    and
\item[(CK4)] $s_v = \sum_{\lambda \in v\Lambda^n} s_\lambda s_\lambda^*$ for all
    $v\in \Lambda^0$ and $n\in \N^k$.
\end{itemize}
The \emph{$k$-graph $C^*$-algebra $C^*(\Lambda)$} is the universal $C^*$-algebra
generated by a Cuntz--Krieger $\Lambda$-family. There is an isomorphism $C^*
( \Lambda ) \cong C^* ( \G_\Lambda )$ satisfying $s_\lambda \mapsto 1_{Z(\lambda,
s(\lambda))}$ (see \cite[Corollary~3.5(i)]{KP2000}).

As discussed above, we have $\G_{\Lambda \times_d \Z^k} \cong \G_\Lambda
\times_c \Z^k$, and so
\[
C^* ( \G_\Lambda \times_c \Z^k ) \cong C^* ( \G_{\Lambda \times_d \Z^k} ) \cong
C^* ( \Lambda \times_d \Z^k ).
\]
These are approximately finite dimensional (AF) algebras by \cite[Lemma 5.4]{KP2000}.

\subsection{\texorpdfstring{$K$}{K}-theory for \texorpdfstring{$C^*$}{C*}-algebras}

This paper is concerned primarily with calculating groupoid homology, but it is motivated
by the relationship between this and $K$-theory of groupoid $C^*$-algebras, and some of
our key results concern $K$-theory. Readers unfamiliar with $C^*$-algebras and their
$K$-theory, and who are primarily interested in groupoids and groupoid homology will not
need to know more about $C^*$-algebraic $K$-theory than the following points:
$C^*$-algebraic $K$-theory associates two abelian groups $K_0(A)$ and $K_1(A)$ to each
$C^*$-algebra $A$; these groups are invariant under Morita equivalence, and continuous
with respect to inductive limits; the $K_0$-group is the Grothendieck group of a
semigroup consisting of equivalence classes of projections in matrix algebras over $A$;
the $K$-groups of a crossed-product of a $C^*$-algebra $A$ by $\Z$ are related to those
of $A$ by the Pimsner--Voiculescu exact sequence \cite{pv}; and the $K$-groups of a
crossed-product of a $C^*$-algebra $A$ by $\Z^k$ fit into a spectral sequence, due to
Kasparov \cite{kasparov}, involving the homology groups of $\Z^k$ with values in the
$K$-groups of $A$. For more background on $C^*$-algebraic $K$-theory, we refer the
interested reader to \cite{wegge}, \cite{rll} or \cite{B}.

\subsection{\texorpdfstring{$c$}{c}-soft sheaves}\label{subsec:csoft}

Let $X$  be a locally compact Hausdorff space. By a \textit{sheaf of abelian groups}, or
simply a sheaf, over $X$, we mean a (not necessarily Hausdorff) \'etale groupoid
$\mathcal{F}$ with unit space $\mathcal{F}^{(0)} = X$ in which $r = s$, so every element
belongs to the isotropy, and in which each isotropy group $\mathcal{F}_x =
\mathcal{F}^x_x$ is abelian. We think of $\mathcal{F}$ as a group bundle over $X$ with
bundle map  $r = s$. Given a subset $W \subseteq X$, we write $\Gamma(W, \mathcal{F})$
for the set $\{t: W \to \mathcal{F} : t(w) \in \mathcal{F}_w\text{ and }t \text{ is
continuous}\}$  of continuous sections of $\mathcal{F}$ over $W$. A sheaf $\mathcal{F}$
over $X$ is said to be \textit{c-soft} if the restriction map $\Gamma(X, \mathcal{F}) \to
\Gamma(K, \mathcal{F})$ is surjective for any compact set $K \subseteq X$ (see e.g.
\cite[Definition 2.5.5]{KS} or \cite[II.9.1]{Bredon}); that is, if every continuous
section of $\Gamma$ over a compact subset of $X$ extends to a continuous section over all
of $X$.

The property of $c$-softness is a key hypothesis for results of Crainic and Moerdijk (see
\cite{CM}) that we will need in our study of the homology of ample Hausdorff groupoids.
\begin{lemma}
	\label{lemma:extension-sets} Let $X$ be a $0$-dimensional topological space, and let
$K \subseteq X $ be  compact. If $W$ is a compact open subset of $K$ (in the relative
topology on $K$), then there exists a compact open set $V \subseteq X$ such that $W = V
\cap K$.
\end{lemma}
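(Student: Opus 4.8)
The plan is to build $V$ as a finite union of compact open neighbourhoods of the points of $W$, each chosen small enough that its intersection with $K$ remains inside $W$, and then to extract compactness of $V$ from compactness of $W$.

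First I would unpack the hypothesis that $W$ is open in the relative topology on $K$: by definition there is an open set $O \subseteq X$ with $W = O \cap K$. Since $X$ is zero-dimensional and hence has a basis of compact open sets, for each $w \in W$ I can choose a compact open set $V_w \subseteq X$ with $w \in V_w \subseteq O$. This choice forces $V_w \cap K \subseteq O \cap K = W$, which is the containment I will need at the end. Next I would invoke compactness of $W$: the family $\{V_w : w \in W\}$ is an open cover of $W$, so it admits a finite subcover $V_{w_1}, \dots, V_{w_n}$. Setting $V = \bigcup_{i=1}^n V_{w_i}$, I observe that $V$ is open as a union of open sets and compact as a finite union of compact sets, so $V$ is the desired compact open subset of $X$.

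Finally I would verify $W = V \cap K$ by two inclusions. Since the $V_{w_i}$ cover $W$ we have $W \subseteq V$, and $W \subseteq K$ by hypothesis, so $W \subseteq V \cap K$. Conversely, $V \cap K = \bigcup_{i=1}^n (V_{w_i} \cap K) \subseteq W$, using the containment $V_{w_i} \cap K \subseteq W$ established above. The only step requiring genuine care is the first one: it is essential to use zero-dimensionality to arrange that each neighbourhood $V_w$ is compact open rather than merely open, since it is precisely this that makes the finite union $V$ compact. Beyond that point the argument is routine point-set manipulation.
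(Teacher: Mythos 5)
Your proof is correct and follows essentially the same route as the paper's: express $W$ as $O\cap K$ for an open $O\subseteq X$, cover $W$ by compact open basis sets contained in $O$, extract a finite subcover by compactness of $W$, and take $V$ to be the finite union. The only cosmetic difference is that the paper covers $W$ by the relative sets $U\cap K$ rather than by the $V_w$ directly, which changes nothing.
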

\begin{proof}
Since $W$ is open in the relative topology,  $W= \widehat{V} \cap K,$ with $\widehat{V} $
open in $X$. Let $\mathcal{U}$ be a basis of compact open sets for the topology on $X$;
so $\mathcal{U}' := \{U \in \mathcal{U} : U \subseteq \widehat{V}\}$ satisfies
$\widehat{V}= \bigcup_{U \in \mathcal{U}'} U$. Since $\mathcal{U}'_K:=\{ U \cap K : U \in
\mathcal{U}'\} $ is an open cover of the compact set $W\subseteq K $ (in the relative
topology), There is a finite subset $F \subseteq \mathcal{U}'$ such that
\[
W = \bigcup_{U \in F} (U \cap W).
\]
Now $V := \bigcup_{U \in F} U$ is compact open, and $W = V \cap K$.
\end{proof}

\begin{proposition} \label{prp:constsheafiscsoft}\label{prop:c-soft}
Let $X$ be a $0$-dimensional topological space. Then the constant sheaf $\mathcal{F}$ on
$X$ with values in a discrete abelian group $A$ is c-soft.
\end{proposition}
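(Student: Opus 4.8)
The plan is to show that every continuous section of the constant sheaf $\mathcal{F}$ over a compact subset $K \subseteq X$ extends to a continuous global section. The key observation is that a continuous section $t \in \Gamma(K, \mathcal{F})$ of the \emph{constant} sheaf with values in the discrete group $A$ is precisely a locally constant function $t : K \to A$, since continuity into a discrete fibre bundle forces local constancy. Because $K$ is compact, the image $t(K)$ is a finite subset $\{a_1, \dots, a_m\}$ of $A$, and the preimages $W_j := t^{-1}(a_j)$ partition $K$ into finitely many disjoint sets that are each open in the relative topology on $K$ (and, being complements of unions of the others, also closed, hence compact open) in $K$.

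First I would invoke Lemma~\ref{lemma:extension-sets}: since each $W_j$ is a compact open subset of $K$ and $X$ is $0$-dimensional, there exists a compact open set $V_j \subseteq X$ with $W_j = V_j \cap K$. The sets $V_j$ need not be disjoint, so the second step is to disjointify them. Replacing $V_j$ by $V_j \setminus \bigcup_{i < j} V_i$ (a compact open set, since compact open sets are closed under finite unions, complements, and intersections in a $0$-dimensional space) yields pairwise disjoint compact open sets $V_j' \subseteq X$ that still satisfy $V_j' \cap K = W_j$; here one uses that the $W_j$ are already pairwise disjoint in $K$, so removing lower-indexed $V_i$ does not remove any point of $W_j$.

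Finally I would define the global section $\widetilde{t} : X \to \mathcal{F}$ by setting $\widetilde{t}(x) = a_j$ for $x \in V_j'$ and $\widetilde{t}(x) = 0$ (the identity of $\mathcal{F}_x$) for $x \notin \bigcup_j V_j'$. This is well-defined by disjointness, it is locally constant because the $V_j'$ are open and their complement is open, hence it is a continuous section; and it restricts to $t$ on $K$ since $\widetilde{t}(x) = a_j = t(x)$ whenever $x \in W_j = V_j' \cap K$, covering all of $K = \bigcup_j W_j$.

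I do not expect a serious obstacle here: the argument is a standard partition-and-extend manoeuvre, and all the topological work is already packaged in Lemma~\ref{lemma:extension-sets}. The only point requiring a little care is the translation between ``continuous section of the constant sheaf'' and ``locally constant $A$-valued function,'' together with the finiteness of the image coming from compactness of $K$; once these are in hand, the disjointification and gluing are routine.
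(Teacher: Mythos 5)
Your argument is correct and follows essentially the same route as the paper's proof: identify sections of the constant sheaf over $K$ with locally constant $A$-valued functions, use compactness and discreteness to get a finite partition of $K$ into compact open level sets, extend each piece via Lemma~\ref{lemma:extension-sets}, disjointify the extensions, and glue (padding by $0$ off their union). The only cosmetic difference is that the paper indexes the pieces by the values $a \in f(K)$ with an arbitrary total order rather than by $j$, and your justification that $V_j' \cap K = W_j$ matches theirs.
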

\begin{proof}
Since $\mathcal{F}$ is the constant sheaf, for every $W \subseteq X$ we have $\Gamma(W,
\mathcal{F}) \cong C(W, A)$. Let $K \subseteq X$ be compact and fix $f \in C(K, A)$. Then
$f(K)$ is a compact subset of the discrete group $A$, and hence finite. For $a \in f(K)$
we let $U_a = f^{-1} (a)$. Since $f$ is continuous each $U_a \subseteq K$ is clopen and
since $K$ is compact $U_a$ is compact and open. By Lemma \ref{lemma:extension-sets}, for
each $a \in A$ there exists a compact open set $V_a \subseteq X$ such that $U_a = V_a
\cap K$. Fix a total order $\le$ on $f(K)$ and for each $a \in f(K)$, define $V'_a := V_a
\backslash \bigcup_{b < a} V_b$. Then each $V_a'$ is a compact open subset of  $X$ since
the $V_a$ are compact open. Moreover, since the $U_a$ are mutually disjoint, we have
$V'_a \cap K = V_a \cap K$ for all $a$. Hence the formula
\[
\tilde{f}(x) :=\left\{  \begin{array}{cc}
a \ &\text{ for } x\in V_a'\\
0\ &\text{ otherwise,}
\end{array}\right.
\]
defines a continuous extension $\tilde{f}$ of  $f$ to $X$.
\end{proof}

\section{Equivalence of ample Hausdorff groupoids}\label{sec:equivalence}

There are a number of notions of equivalence of groupoids that are relevant to us here,
and we need to know that they all coincide for ample Hausdorff groupoids. The notions
that we consider are Morita equivalence \cite{CM}, groupoid equivalence \cite{MRW} (see
also \cite{Ren82}), equivalence via a linking groupoid, equivalence via isomorphic
ampliations, similarity \cite{Ramsay:AM1971, Renault, Matui-2012}, Kakutani equivalence
\cite{Matui-2012} and stable isomorphism \cite{CRS}. We show that the first four of these
notions coincide for all Hausdorff \'etale groupoids (Proposition~\ref{prp:nonample
equiv}), and that they all coincide for ample Hausdorff groupoids with $\sigma$-compact
unit spaces (Theorem~\ref{thm:equivalences}).

The following notion of similarity, recorded by Matui \cite[Definition 3.4]{Matui-2012}
and called homological similarity there, appears in the context of algebraic groupoids in
Renault's thesis \cite{Renault}, and earlier still in \cite{Ramsay:AM1971}---where Ramsay
in turn attributes it to earlier work of Mackey. An adaptation of this notion to the
situation of Lie groupoids also appears in \cite{MoerdijkMrcun} where it is called strong
equivalence.

\begin{definition}[{\cite[Definition~I.1.3]{Renault}, \cite[Definition~3.4]{Matui-2012}}]\label{def:sim}
Let $\mathcal{G, H}$ be Hausdorff \'etale groupoids and let $\rho, \sigma : \G \to
\mathcal{H}$ be continuous homomorphisms. We say that $\rho$ is similar to $\sigma$ if
there is a continuous map   $\theta : \G^{(0)} \to \mathcal{H}$ such that
\[
    \theta(r(g))\rho(g) = \sigma(g)\theta(s(g))\quad\text{ for all $g \in \G$.}
\]
We say that $\G$ and $\mathcal{H}$ are \emph{similar} groupoids if there exist \'etale
homomorphisms $\rho : \G \to \mathcal{H}$ and $\sigma : \mathcal{H} \to \G$ such that
$\sigma \circ \rho$ is similar to $\id_{\G}$ and $\rho \circ \sigma$ is similar to
$\id_{\mathcal{H}}$.   In this case, each of the two maps, $\rho$ and $\sigma$, is called a
\emph{similarity}.
\end{definition}

\begin{remark}\label{rmk:sim equiv}
It is not stated in \cite{Renault} or in \cite{Matui-2012} that similarity of groupoids
is an equivalence relation, but this is standard (it is essentially the argument that
category equivalence is an equivalence relation). It is also easy to give a direct
argument: suppose that $\sigma : \G \to \mathcal{H}$ and $\rho : \mathcal{H} \to \G$
implement a similarity, and that $\alpha : \mathcal{H} \to \mathcal{K}$ and $\beta :
\mathcal{K} \to \mathcal{H}$ also implement a similarity. We aim to show that $\alpha
\circ \sigma$ and $\rho \circ \beta$ implement a similarity. By symmetry it suffices to
find $\kappa : \G^{(0)} \to \G$ such that
\[
\kappa(r(g)) \big(\rho \circ \beta \circ \alpha \circ \sigma\big)(g) = g \kappa(s(g))\quad\text{ for all $g$.}
\]
Since $\rho \circ \sigma \sim \id_{\G}$ and $\beta \circ \alpha \sim \id_{\mathcal{H}}$,
there are $\theta : \G^{(0)} \to \G$ and $\eta : \mathcal{H}^{(0)} \to \mathcal{H}$ such
that
\[
\theta(r(g)) \rho\big(\sigma(g)\big) = g \theta(s(g))
    \quad\text{ and }\quad
\eta(r(h)) \beta \big(\alpha(h)\big) = h \eta(s(h)).
\]
Define $\kappa(x) = \theta(x) \rho \Big(\eta(r(\sigma(x))) \Big)$. Then using that $r(\sigma(r(g)))
= r(\sigma(g))$, and that $\rho$ is a homomorphism, we compute
\begin{align*}
\kappa(r(g))&\big(\rho \circ \beta \circ \alpha \circ \sigma\big)(g)
    = \theta(r(g)) \rho \big(\eta(r(\sigma(g)) \big) \beta\big( \alpha(\sigma(g))) \big) \\
    &= \theta(r(g)) \rho\big( \sigma(g) \big) \rho \big(\eta(s(\sigma(g))) \big)
    = g \theta(s(g)) \rho \big(\eta(s(\sigma(g))) \big)
    = g \kappa(s(g)).
\end{align*}
Thus $(\rho \circ \beta) \circ (\alpha \circ \sigma)$ is similar to $\id_{\G}$ and by symmetry
$(\alpha\circ\sigma)\circ(\rho\circ\beta)$ is similar to $\id_{\mathcal{K}}$.

\end{remark}

\begin{remark}\label{rmk:sim orbit maps}
Let  $\rho, \sigma : \G \to \mathcal{H}$ be continuous groupoid homomorphisms between
Hausdorff \'etale groupoids. Then $\rho$ and $\sigma$ both induce well-defined orbit maps
$[u] \mapsto [\rho(u)]$ and $[u] \mapsto [\sigma(u)]$. Suppose that $\rho$ and $\sigma$
are similar; then $[\rho(u)] = [\sigma(u)]$ for all $u \in \G^{(0)}$, and so the orbit
maps induced by $\rho$ and $\sigma$ are equal. It follows that every similarity of groupoids
induces a bijection between their orbit spaces.
\end{remark}

\begin{definition}[{\cite[Section~4.5]{CM}}]\label{def:weak equiv}
Let $\mathcal{G,H}$ be groupoids. A continuous functor $\varphi : \G \to \mathcal{H}$ is
a \textit{weak equivalence} if
\begin{enumerate}[(i)]
\item the map from $\G^{(0)} *_{\mathcal{H}^{(0)}} \mathcal{H} := \{(u,\gamma) \in
    \G^{(0)} \times \mathcal{H} : \varphi(u) = r(\gamma)\}$ to $\mathcal{H}^{(0)}$
    given by $( u , \gamma ) \mapsto s ( \gamma )$ is an \'etale surjection, and
\item $\G$ is isomorphic to the fibred product
\[
\G^{(0)} *_{\mathcal{H}^{(0)}} \mathcal{H}  *_{\mathcal{H}^{(0)}} \G^{(0)} = \{ (u,\gamma, v) \in \G^{(0)} \times \mathcal{H}  \times \G^{(0)} : r ( \gamma )  = \varphi (u) ,  s ( \gamma) = \varphi (v) \}.
\]
\end{enumerate}
If such a $\varphi$ exists, then we write $\G \stackrel{\sim}{\to} \mathcal{H}$. The
groupoids $\G$ and $\mathcal{H}$ are \emph{Morita equivalent} if there is a groupoid
$\mathcal{K}$ that admits weak equivalences $\G \stackrel{\sim}{\leftarrow} \mathcal{K}
\stackrel{\sim}{\to} \mathcal{H}$.
\end{definition}

\begin{remark}
It is not stated explicitly in \cite{CM} that Morita equivalence is in fact an
equivalence relation, though this is certainly standard. In any case it follows from
Theorem~\ref{thm:equivalences} below.
\end{remark}

In \cite[Proposition 5.11]{MoerdijkMrcun} it is shown that the analogue of similarity for
Lie groupoids implies the analogue of Morita equivalence. We briefly indicate how to
modify their argument to obtain the same result for Hausdorff \'etale groupoids.

\begin{lemma}\label{lem:sim->w.equiv}
Let $\mathcal{G,H}$ be Hausdorff \'etale groupoids. If $\G$ and $\mathcal{H}$ are
similar, then they are Morita equivalent.
\end{lemma}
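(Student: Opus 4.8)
The plan is to prove the apparently stronger statement that a single similarity $\rho\colon\G\to\mathcal{H}$ is already a \emph{weak equivalence} in the sense of Definition~\ref{def:weak equiv}. Morita equivalence then follows at once by taking $\mathcal{K}=\G$ and the span $\G\xleftarrow{\id}\G\xrightarrow{\rho}\mathcal{H}$, since the identity functor is trivially a weak equivalence. So I fix \'etale homomorphisms $\rho\colon\G\to\mathcal{H}$ and $\sigma\colon\mathcal{H}\to\G$ together with continuous maps $\theta\colon\G^{(0)}\to\G$ and $\eta\colon\mathcal{H}^{(0)}\to\mathcal{H}$ implementing $\sigma\circ\rho\sim\id_\G$ and $\rho\circ\sigma\sim\id_{\mathcal{H}}$, and I would first record the consequences of the defining identities at units: evaluating the identity for $\eta$ at $w\in\mathcal{H}^{(0)}$ gives $r(\eta(w))=w$ and $s(\eta(w))=\rho(\sigma(w))$, and likewise $r(\theta(x))=x$ and $s(\theta(x))=\sigma(\rho(x))$ for $x\in\G^{(0)}$.

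For condition (i) of Definition~\ref{def:weak equiv}, surjectivity of $(u,\gamma)\mapsto s(\gamma)$ onto $\mathcal{H}^{(0)}$ is witnessed, for each $w\in\mathcal{H}^{(0)}$, by the pair $(\sigma(w),\eta(w)^{-1})$, which lies in $\G^{(0)}*_{\mathcal{H}^{(0)}}\mathcal{H}$ because $\rho(\sigma(w))=s(\eta(w))=r(\eta(w)^{-1})$ and has $s(\eta(w)^{-1})=w$. That this map is \'etale I would deduce from the fact that the projection $(u,\gamma)\mapsto\gamma$ is the pullback along $r$ of the restriction $\rho|_{\G^{(0)}}$ (a local homeomorphism since $\rho$ is \'etale), hence is itself \'etale, and that $s\colon\mathcal{H}\to\mathcal{H}^{(0)}$ is \'etale; local homeomorphisms are stable under pullback and composition.

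For condition (ii) the candidate isomorphism is the continuous homomorphism $\Phi\colon\G\to\G^{(0)}*_{\mathcal{H}^{(0)}}\mathcal{H}*_{\mathcal{H}^{(0)}}\G^{(0)}$ defined by $\Phi(g)=(r(g),\rho(g),s(g))$. Injectivity reduces, by comparing two elements with the same image, to showing that an isotropy element $k$ with $\rho(k)$ a unit is itself a unit: substituting $k$ into $\theta(r(g))\sigma(\rho(g))=g\,\theta(s(g))$ and using $\sigma(\rho(k))=s(\theta(r(k)))$ collapses the relation to $\theta(r(k))=k\,\theta(r(k))$, which forces $k=r(k)$.

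The surjectivity of $\Phi$ (the fullness of $\rho$) is the step I expect to be the main obstacle, because the maps $\theta$ and $\eta$ are a priori unrelated. The naive preimage $\theta(u)\sigma(\gamma)\theta(v)^{-1}$ of a triple $(u,\gamma,v)$ does not map to $\gamma$ but to $\psi(u)\gamma\psi(v)^{-1}$, where $\psi(x):=\rho(\theta(x))\eta(\rho(x))^{-1}$ is an isotropy-valued self-similarity of $\rho$, satisfying $\psi(r(g))\rho(g)=\rho(g)\psi(s(g))$ (verified by applying $\rho$ to the identity for $\theta$ and substituting the identity for $\eta$). Correcting for this discrepancy, I would instead take $g:=\theta(u)\,\sigma(\psi(u)^{-1}\gamma\,\psi(v))\,\theta(v)^{-1}$, which has $r(g)=u$, $s(g)=v$ and $\rho(g)=\gamma$, so $\Phi(g)=(u,\gamma,v)$. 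Since this formula exhibits a continuous inverse and $\Phi$ is a continuous homomorphism, $\Phi$ is an isomorphism of topological groupoids, completing the verification that $\rho$ is a weak equivalence.
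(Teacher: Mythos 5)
Your proposal is correct and follows essentially the same route as the paper's proof: show that the similarity map itself is a weak equivalence in the sense of Definition~\ref{def:weak equiv} and deduce Morita equivalence from the resulting span. The only difference is one of detail --- where the paper declares the bijectivity of $g\mapsto (r(g),\rho(g),s(g))$ ``straightforward,'' you carry out the verification explicitly (your isotropy argument for injectivity and the correction term $\psi$ for surjectivity are both valid), and your witness $(\sigma(w),\eta(w)^{-1})$ for surjectivity in condition~(i) is in fact more careful than the paper's.
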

\begin{proof}
Suppose that $\sigma : \G \to \mathcal{H}$ and $\rho : \mathcal{H} \to \G$ constitute a
similarity of groupoids. We claim that the map $\sigma$ is a weak equivalence. It is easy
to show that $(u, h) \mapsto s(h)$ is a surjection from $\G^{(0)} *_{\G^{(0)}}
\mathcal{H}$ to $\mathcal{H}^{(0)}$ since any $v \in \HH^{(0)}$ is the image of $( \rho
(v) , v )$. We claim that it is an \'etale map. For this, fix $(u,h) \in \G^{(0)}
*_{\G^{(0)}} \mathcal{H}$, and use that $\sigma$ is an \'etale map to choose a
neighbourhood $U$ of $u$ such that $\sigma|_U$ is a homeomorphism onto its range. Pick a
bisection neighbourhood $B$ of $h$. By shrinking if necessary, we can assume that
$\sigma(U) = r_{\mathcal{H}}(B)$. Then $s \circ \pi_2$ is a homeomorphism on $U * B \cong
B$.

Since $\rho , \sigma$ constitute a similarity of groupoids it is straightforward to see
that the map $g \mapsto (r(g), \sigma(g), s(g))$ is a bijection from $\G$ to $\G^{(0)}
*_{\mathcal{H}^{(0)}} \mathcal{H} *_{\mathcal{H}^{(0)}} \G^{(0)}$. For $g \in G$ we use
that $r, \sigma$, and $s$ are \'etale maps to find a neighbourhood $U$ of $g$ on which
they are all homeomorphisms, and observe that then $g \mapsto (r(g), \sigma(g), s(g))$ is
a homeomorphism onto $r(U) *_{\mathcal{H}^{(0)}} \sigma(U) *_{\mathcal{H}^{(0)}} s(U)$.
So $g \mapsto (r(g), \sigma(g), s(g))$ is continuous and open.
\end{proof}

The third notion of equivalence that we consider is the one formulated by Renault (see
\cite[Section~3]{Ren82}) and studied in \cite{MRW}. Given a locally compact Hausdorff
groupoid $\G$, we say that a locally compact Hausdorff space $Z$ is a \emph{left
$\G$-space} if it is equipped with a continuous open map $r : Z \to \G^{(0)}$ and a
continuous pairing $(g, z) \mapsto g \cdot z$ from $\G * X$ to $X$ such that $r(g \cdot
z) = r(g)$ and $(gh) \cdot z = g \cdot (h \cdot z)$ and such that $r(z) \cdot z = z$. We
say that $Z$ is a free and proper left $\G$-space if the map $(g, x) \mapsto (g\cdot x,
x)$ is a proper injection from $\G * X$ to $\G \times X$. Right $\G$-spaces are defined
analogously.

\begin{definition}[{\cite[Definition~2.1]{MRW},\cite[\S3]{Ren82}}]
The groupoids $\G$ and $\mathcal{H}$ are \emph{equivalent} if there is a locally compact
Hausdorff space $Z$ such that
\begin{enumerate}
\item $Z$ is a free and proper left $\G$-space with fibre map $r : Z \to \G^{(0)}$,
\item $Z$ is a free and proper right $\mathcal{H}$-space with fibre map $s : Z \to
    \mathcal{H}^{(0)}$,
\item the actions of $\G$ and $\mathcal{H}$ on $Z$ commute,
\item $r : Z \to \G^{(0)}$ induces a homeomorphism $Z/\mathcal{H} \to \G^{(0)}$, and
\item $r : Z \to \mathcal{H}^{(0)}$ induces a homeomorphism $\G\backslash Z \to
    \mathcal{H}^{(0)}$.
\end{enumerate}
\end{definition}

The fourth notion of equivalence we need to discuss is the generalisation of Kakutani
equivalence developed by Matui \cite[Definition~4.1]{Matui-2012} in the situation of
ample Hausdorff groupoids with compact unit spaces, and extended to non-compact unit
spaces in \cite{CRS}. This notion has previously been discussed only for ample groupoids,
but it makes sense for general Hausdorff \'etale groupoids, and in particular weak
Kakutani equivalence is a fairly natural notion in this setting (though in this more general setting
it is not an equivalence relation, see Example~\ref{ex:sim ne equiv} ).

\begin{definition}
The Hausdorff \'etale groupoids $\G$ and $\mathcal{H}$ are \emph{weakly Kakutani
equivalent} if there are full open subsets $X \subseteq \G^{(0)}$ and $Y \subseteq
\mathcal{H}^{(0)}$ such that $\G|_X \cong \mathcal{H}|_Y$. They are \emph{Kakutani
equivalent} if $X$ and $Y$ can be chosen to be clopen sets.
\end{definition}

For ample Hausdorff groupoids with $\sigma$-compact unit spaces, \cite[Theorem~3.2]{CRS}
shows that weak Kakutani equivalence and Kakutani equivalence both coincide with groupoid
equivalence, and with a number of other notions of equivalence. Our next two results show
first that for Hausdorff \'etale groupoids, Morita equivalence and equivalence in the
sense of \cite{MRW} are equivalent to the existence of a linking groupoid, and to
existence of isomorphic ampliations of the two groupoids in the following sense.

If $\G$ is a Hausdorff \'etale groupoid, $X$ is a locally compact Hausdorff space, and
$\psi : X \to \G^{(0)}$ is a local homeomorphism, then the \emph{ampliation} (also known
as the \emph{blow-up} \cite[\S3.3]{Wil16}) $\G^\psi$ of $\G$  corresponding to $\psi$ is
given by
\[
\G^{\psi} = \{(x, \gamma, y) \in X \times \G \times X : \psi(x) = r(\gamma)\text{ and } \psi(y) = s(\gamma)\}
\]
with $\big((x, \gamma, y), (w, \eta, z)\big) \in (\G^{\psi})^{(2)}$ if and only if $y =
w$, and composition and inverses given by $(x, \gamma, y)(y, \eta, z) = (x, \gamma\eta,
z)$ and $(x, \gamma, y)^{-1} = (y, \gamma^{-1}, x)$. This is a Hausdorff \'etale groupoid
under the relative topology inherited from $X \times \G \times X$.

\begin{example}  \label{ex:rpsidef}
Let $X$ and $Y$ be locally compact Hausdorff spaces and $\psi: Y \to X$ be a local homeomorphism.
Then we may regard
\[
R(\psi) := \{ (y_1, y_2) \in Y \times Y : \psi(y_1) = \psi(y_2) \}
\]
as an Hausdorff \'etale groupoid (see \cite{kumjian}).  Note that $R(\psi)$ is the
ampliation of the trivial groupoid $X$ corresponding to $\psi$.
\end{example}

\begin{proposition}\label{prp:nonample equiv}
Let $\G$ and $\mathcal{H}$ be Hausdorff \'etale groupoids. The following are equivalent:
\begin{enumerate}
\item $\G$ and $\mathcal{H}$ are Morita equivalent;
\item there is a Hausdorff \'etale groupoid $\mathcal{L}$ and a decomposition
    $\mathcal{L}^{(0)} = X \sqcup Y$ of $\mathcal{L}^{(0)}$ into complementary full
    clopen subsets such that $\mathcal{L}|_X \cong \G$ and $\mathcal{L}|_Y \cong
    \mathcal{H}$;
\item $\G$ and $\mathcal{H}$ are equivalent in the sense of Renault; and
\item $\G$ and $\mathcal{H}$ admit isomorphic ampliations.
\end{enumerate}
If $\G$ and $\HH$ are weakly Kakutani equivalent, then they satisfy (1)--(4).
\end{proposition}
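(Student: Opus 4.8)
The plan is to prove the four conditions equivalent by establishing the cycle $(1)\Rightarrow(3)\Rightarrow(2)\Rightarrow(4)\Rightarrow(1)$, and to deduce the final sentence by showing directly that weak Kakutani equivalence implies~(4). The linchpin is that the moment maps of a Renault equivalence of \'etale groupoids are local homeomorphisms, so such an equivalence converts into a pair of isomorphic ampliations, while conversely an ampliation by a full-image local homeomorphism is a weak equivalence.

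For $(1)\Rightarrow(3)$ I would first observe that a single weak equivalence $\varphi\colon\mathcal K\to\G$ produces a Renault equivalence: the space $Z_\varphi=\mathcal K^{(0)} *_{\G^{(0)}}\G=\{(u,\gamma):\varphi(u)=r(\gamma)\}$ carries commuting free and proper actions of $\mathcal K$ on the left, via $k\cdot(u,\gamma)=(r(k),\varphi(k)\gamma)$, and of $\G$ on the right, via $(u,\gamma)\cdot g=(u,\gamma g)$, and conditions (i) and (ii) of Definition~\ref{def:weak equiv} are exactly what is needed to identify $Z_\varphi/\G\cong\mathcal K^{(0)}$ and $\mathcal K\backslash Z_\varphi\cong\G^{(0)}$. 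Given a Morita equivalence $\G\xleftarrow{\sim}\mathcal K\xrightarrow{\sim}\mathcal H$, I would then compose the two resulting equivalences via the balanced product over $\mathcal K$, the standard composition of groupoid equivalences of \cite{MRW}, to obtain a $\G$--$\mathcal H$ equivalence. For $(3)\Rightarrow(2)$ I would build the linking groupoid: with $Z$ a $\G$--$\mathcal H$ equivalence, set $\mathcal L=\G\sqcup Z\sqcup Z^{\mathrm{op}}\sqcup\mathcal H$ on unit space $\mathcal L^{(0)}=\G^{(0)}\sqcup\mathcal H^{(0)}=:X\sqcup Y$, with multiplication extending the two groupoid products and the two actions; the equivalence axioms make $\mathcal L$ a Hausdorff \'etale groupoid in which $X$ and $Y$ are complementary full clopen sets with $\mathcal L|_X\cong\G$ and $\mathcal L|_Y\cong\mathcal H$.

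The heart of the argument is $(2)\Rightarrow(4)$. Starting from a linking groupoid $\mathcal L$ as in~(2), I would set $Z:=\{\ell\in\mathcal L:r(\ell)\in Y,\ s(\ell)\in X\}$, an open subset of $\mathcal L$. Since $r$ and $s$ are local homeomorphisms on $\mathcal L$ and $X,Y$ are full, the restrictions $s|_Z\colon Z\to X=\G^{(0)}$ and $r|_Z\colon Z\to Y=\mathcal H^{(0)}$ are surjective local homeomorphisms. The key point is that the two resulting ampliations are then literally isomorphic: the map
\[
(z_1,\gamma,z_2)\mapsto (z_1,\ z_1\gamma z_2^{-1},\ z_2)
\]
is an isomorphism $\G^{\,s|_Z}\xrightarrow{\ \cong\ }\mathcal H^{\,r|_Z}$, with inverse $(z_1,\eta,z_2)\mapsto(z_1,z_1^{-1}\eta z_2,z_2)$; one checks from the unit-space decomposition that $z_1\gamma z_2^{-1}$ lands in $\mathcal L|_Y=\mathcal H$ and that both maps are continuous and multiplicative. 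For $(4)\Rightarrow(1)$ I would use that when $\psi\colon X'\to\G^{(0)}$ is a local homeomorphism with full image the forgetful functor $\G^{\psi}\to\G$, $(x,\gamma,y)\mapsto\gamma$, is a weak equivalence (condition (ii) is the definition of $\G^\psi$, and condition (i) is the surjectivity of $X'*_{\G^{(0)}}\G\to\G^{(0)}$, which is precisely fullness of the image of $\psi$); an isomorphism $\G^\psi\cong\mathcal H^\phi$ then yields a span $\G\xleftarrow{\sim}\G^\psi\cong\mathcal H^\phi\xrightarrow{\sim}\mathcal H$ of weak equivalences, i.e.\ a Morita equivalence. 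Finally, if $\G$ and $\mathcal H$ are weakly Kakutani equivalent via full open $X\subseteq\G^{(0)}$ and $Y\subseteq\mathcal H^{(0)}$ with $\G|_X\cong\mathcal H|_Y$, then $\G|_X=\G^{\iota_X}$ and $\mathcal H|_Y=\mathcal H^{\iota_Y}$ are ampliations along the full-image open inclusions $\iota_X,\iota_Y$, so the given isomorphism is itself an isomorphism of ampliations, establishing~(4) and hence all of (1)--(4).

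I expect the main obstacle to be the topological bookkeeping rather than the algebra. In $(1)\Rightarrow(3)$ one must verify freeness, properness, and the quotient homeomorphisms for $Z_\varphi$ and then invoke the composition of equivalences correctly, and in $(3)\Rightarrow(2)$ the genuine work lies in specifying the topology on $\mathcal L$ and checking that its range and source maps are local homeomorphisms. I would be careful about the fullness hypothesis in~(4): an ampliation by a local homeomorphism whose image misses some orbit need not be Morita equivalent to the original groupoid, so $(4)\Rightarrow(1)$ really does require the ampliating maps to have full image, and I would state~(4) with this understood. The multiplicativity and homeomorphism checks in $(2)\Rightarrow(4)$, by contrast, are a short direct computation inside $\mathcal L$ and should present no difficulty.
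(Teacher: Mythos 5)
Your argument is correct, and it traverses the cycle of implications in a different order from the paper, with genuinely different constructions at two points. The paper proves $(1)\Rightarrow(2)\Rightarrow(3)\Rightarrow(4)\Rightarrow(1)$: it builds the linking groupoid directly from a weak equivalence $\varphi$ as the four-piece disjoint union $(\G^{(0)}*\mathcal H*\G^{(0)})\sqcup(\G^{(0)}*\mathcal H)\sqcup(\mathcal H*\G^{(0)})\sqcup\mathcal H$, and proves $(3)\Rightarrow(4)$ by working with an abstract equivalence bimodule $Z$ and the continuous pairings $[\cdot,\cdot]_{\mathcal H}$ and $_{\G}[\cdot,\cdot]$ (citing \cite{MRW, SW1} for continuity). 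You instead pass from a weak equivalence to a Renault equivalence first and compose equivalences by a balanced product, which shifts the linking-groupoid construction to the $(3)\Rightarrow(2)$ step; and your $(2)\Rightarrow(4)$ realises the paper's pairings as literal multiplication $z_1\gamma z_2^{-1}$ inside $\mathcal L$, which makes continuity and multiplicativity immediate rather than something to be imported from \cite{MRW}. Your proofs of $(4)\Rightarrow(1)$ and of the final statement are cleaner than the paper's: the paper deduces weak Kakutani $\Rightarrow$ (3) by citing the argument of \cite[Lemma~6.1]{CS} that $\G U$ is a $\G$--$\G|_U$ equivalence, whereas you simply observe that $\G|_X$ \emph{is} the ampliation $\G^{\iota_X}$ along the open inclusion, so the Kakutani isomorphism is already an isomorphism of ampliations. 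Your caveat about fullness is a genuine and worthwhile observation: as stated, the definition of ampliation does not require $\psi$ to have full image, and without that hypothesis $(4)$ does not imply $(1)$ (take $\G$ a point and $\mathcal H$ two points, ampliated along the inclusion of one point); the paper's own proof of $(4)\Rightarrow(1)$ silently uses fullness of $\phi(X)$ when verifying that $(x,\gamma)\mapsto s(\gamma)$ is surjective, so condition $(4)$ should indeed be read with full-image ampliations, exactly as you propose. The remaining deferred verifications (freeness and properness of the actions on $Z_\varphi$, the topology on the linking groupoid) are at the same level of routine checking that the paper itself leaves to the reader.
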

\begin{proof}
For \mbox{(1)\;$\implies$\;(2)}, suppose that $\phi : \G \to \mathcal{H}$ is a weak
equivalence in the sense of Definition~\ref{def:weak equiv}. Then $\G$ is isomorphic to
the fibred product
\[
\G^{(0)} * \mathcal{H} * \mathcal{\G}^{(0)}
    := \{(x, \eta, y) \in \G^{(0)} \times \mathcal{H} \times \mathcal{\G}^{(0)} : \phi(x) = r(\eta)\text{ and }\phi(y) = s(\eta)\}.
\]
Using the first condition of Definition~\ref{def:weak equiv} it is straightforward to check that under the natural operations and topology, the disjoint union
\[
\mathcal{L} :=
    (\G^{(0)} * \mathcal{H} * \mathcal{\G}^{(0)}) \sqcup
    (\G^{(0)} * \mathcal{H}) \sqcup
    (\mathcal{H} * \mathcal{\G}^{(0)}) \sqcup
    \mathcal{H}
\]
is a Hausdorff \'etale groupoid satisfying~(2) with respect to $X = \G^{(0)}$ and $Y :=
\mathcal{H}^{(0)}$.

For \mbox{(2)\;$\implies$\;(3)}, one checks that given $\mathcal{L}$ as in~(2), the
subspace
\[
Z := \{z \in \mathcal{L} : r(z) \in X\text{ and } s(z) \in Y\},
\]
under the actions of $\G$ and $\mathcal{H}$ by multiplication on either side, is a
$\G$--$\mathcal{H}$-equivalence as in~(3).

For \mbox{(3)\;$\implies$\;(4)}, suppose that $Z$ is a $\G$--$\mathcal{H}$-equivalence;
to avoid confusion, we will write $\rho : Z \to \G^{(0)}$ and $\sigma : Z \to
\mathcal{H}^{(0)}$ for the anchor maps. Since $Z/\mathcal{H} \cong \G^{(0)}$ and since
the right $\mathcal{H}$-action is free, if $x,y \in Z$ satisfy $\rho(x) = \rho(y)$, then
there is a unique element $[x, y]_{\mathcal{H}}$ of $\mathcal{H}$ satisfying $x \cdot [x,
y]_{\mathcal{H}} = y$. By \cite[\S{2}]{MRW} the map $[\cdot, \cdot]_{\mathcal{H}}$ is
continuous (see also \cite[Lemma~2.1]{SW1}).   Similarly there is a continuous pairing
$(x,y) \mapsto {_\G[x, y]}$ from $\{(x,y) \in Z^2 : \sigma(x) = \sigma(y)\}$ to $\G$ such
that ${_{\G}[x,y]} \cdot y = x$. Consider the ampliations
\begin{align*}
\G^{\rho} &:= \{(x,\gamma, y) : x,y \in Z, \gamma \in \G, \rho(x) = r(\gamma), \rho(y) = s(\gamma)\}\quad\text{ and}\\
\mathcal{H}^{\sigma} &:= \{(x,\eta, y) : x,y \in Z, \eta \in \mathcal{H}, \sigma(x) = r(\eta), \sigma(y) = s(\eta)\}.
\end{align*}
If $(x, \gamma, y) \in \G^{\rho}$, then $\rho(\gamma \cdot y) = r(\gamma) = \rho(x)$, and
so we can take the pairing $[x, \gamma \cdot y]_{\mathcal{H}}$ to obtain an element
$\Theta(x, \gamma, y) := (x, [x, \gamma \cdot y]_{\mathcal{H}}, y) \in
\mathcal{H}^\sigma$. It is routine to check that this is a continuous groupoid
homomorphism. Symmetrically, we see that $\Theta' : (x, \eta, y) \mapsto (x, {_\G[x\cdot
\eta, y]}, y)$ is a continuous groupoid homomorphism from $\mathcal{H}^\sigma$ to
$\G^\rho$. A simple calculation using the defining properties of ${_{\G}[\cdot,\cdot]}$
and $[\cdot,\cdot]_{\mathcal{H}}$ shows $\Theta$ and $\Theta'$ are mutually inverse. So
$\Theta$ is an isomorphism, giving~(4).

For \mbox{(4)\;$\implies$\;(1)}, fix ampliations $\G^\phi$ and $\mathcal{H}^\psi$ and an
isomorphism $\Theta : \G^\phi \to \mathcal{H}^\psi$. Write $\pi^\G$ for the canonical map
$(x, \gamma, y) \mapsto \gamma$ from $\G^\phi$ to $\G$, and $\pi^{\mathcal{H}}$ for the
corresponding map from $\mathcal{H}^\psi$ to $\mathcal{H}$. We obtain continuous groupoid
homomorphisms $\tilde{\phi} : \G^\phi \to \G$ and $\tilde{\psi} : \G^\phi \to
\mathcal{H}$ by $\tilde{\phi} := \pi^\G$ and $\tilde{\psi} := \pi^{\mathcal{H}} \circ
\Theta$. It is routine to check that this determines a Morita equivalence $\G
\stackrel{\tilde{\phi}}{\leftarrow} \G^\phi \stackrel{\tilde{\psi}}{\to} \mathcal{H}$.

For the final statement, observe that if $U$ is a full open subset of $\G^0$, then the
argument of \cite[Lemma~6.1]{CS} shows that $\G U = \{g \in \G : s(g) \in U\}$ is a
$\G$--$\G|_U$-equivalence under the actions determined by multiplication in $\G$. So,
writing $\sim_R$ for equivalence in the sense of Renault, if $\G$ and $\HH$ are weakly
Kakutani equivalent, say $\G|_U \cong \HH|_V$, then we have $\G \sim_R \G|_U \cong \HH|_V
\sim_R \HH$. Since $\sim_R$ is an equivalence relation, we deduce that $\G \sim_R \HH$.
\end{proof}

\begin{remarks} \label{rmk:weakeq}
\begin{enumerate}[(i)]
\item Recall the definition of  $R ( \psi )$ from Example \ref{ex:rpsidef}.
    Proposition~\ref{prp:nonample equiv} shows that $R ( \psi )$ and $X$ are
    equivalent, and so $C^*(R(\psi))$ is Morita equivalent to $C_0(X)$.
\item It follows from the proof of Proposition~\ref{prp:nonample equiv} that if $\G$
    and $\mathcal{H}$ admit isomorphic ampliations, then there exist a locally
    compact Hausdorff space $X$, local homeomorphisms $\phi : X \to \G^{(0)}$ and
    $\psi : X \to \mathcal{H}^{(0)}$, and an isomorphism $\Theta : \G^\phi \to
    \mathcal{H}^\psi$ such that $\Theta(x, \phi(x), x) = (x, \psi(x), x)$ for all $x
    \in X$.
 \item Let $\G$ and $\mathcal{H}$ be minimal Hausdorff \'etale groupoids which are
     equivalent in the sense of Renault. Then with notation as in the
     Proposition~\ref{prp:nonample equiv}(2),  we may identify $\G = \mathcal{L}|_X$
     and $\mathcal{H} = \mathcal{L}|_Y$  where $\mathcal{L}$  is a Hausdorff \'etale
     groupoid and $X$ and $Y$ are complementary full clopen subsets  of
     $\mathcal{L}^{(0)}$.
      Let $U \subseteq \mathcal{L}$ be an open bisection such that
    $r(U) \subseteq  X$ and $s(U) \subseteq Y$.
    Since $\mathcal{L}$ is minimal, both $r(U)$ and $s(U)$ are full open subsets.
    It follows that $\G|_{r(U)} \cong \mathcal{H}|_{s(U)}$ and so $\G$ and $\mathcal{H}$ are weakly Kakutani equivalent.
\end{enumerate}
\end{remarks}

Our next result shows that the notions of equivalence in Proposition~\ref{prp:nonample
equiv} are further equivalent to a number of additional conditions, including similarity,
in the special case of ample Hausdorff groupoids with $\sigma$-compact unit spaces. We
write $\mathcal{R}$ for the (discrete) full equivalence relation $\mathcal{R} = \N \times
\N$.

\begin{theorem}\label{thm:equivalences}
Let $\G$ and $\mathcal{H}$ be ample Hausdorff groupoids with $\sigma$-compact unit
spaces. Then the following are equivalent:
\begin{enumerate}
\item $\G$ and $\mathcal{H}$ are similar;
\item $\G$ and $\mathcal{H}$ are Morita equivalent;
\item there exist an ample Hausdorff groupoid $\mathcal{L}$ and a decomposition
    $\mathcal{L}^{(0)} = X \sqcup Y$ of $\mathcal{L}^{(0)}$ into complementary full
    clopen subsets such that $\mathcal{L}|_X \cong \G$ and $\mathcal{L}|_Y \cong
    \mathcal{H}$;
\item $\G$ and $\mathcal{H}$ are equivalent in the sense of Renault;
\item $\G$ and $\mathcal{H}$ admit isomorphic ampliations;
\item $\G \times \mathcal{R} \cong \mathcal{H} \times \mathcal{R}$;
\item $\G$ and $\mathcal{H}$ are Kakutani equivalent; and
\item $\G$ and $\mathcal{H}$ are weakly Kakutani equivalent.
\end{enumerate}
\end{theorem}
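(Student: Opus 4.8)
The plan is to assemble the equivalence from three ingredients. First, Proposition~\ref{prp:nonample equiv} already gives $(2)\Leftrightarrow(3)\Leftrightarrow(4)\Leftrightarrow(5)$ for all Hausdorff \'etale groupoids. Second, the cited \cite[Theorem~3.2]{CRS}, applied in our ample, $\sigma$-compact setting, identifies groupoid equivalence $(4)$ with Kakutani equivalence $(7)$, weak Kakutani equivalence $(8)$, and the stable-isomorphism condition $(6)$; here one observes that $\mathcal{R} = \N \times \N$ and that $\G \times \mathcal{R}$ is the ampliation $\G^\psi$ for the projection $\psi : \G^{(0)} \times \N \to \G^{(0)}$, so that $(6)$ is a special case of $(5)$. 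Third, Lemma~\ref{lem:sim->w.equiv} gives $(1)\Rightarrow(2)$. Granting these, the only implication left to prove is $(3)\Rightarrow(1)$, and this is where ampleness and $\sigma$-compactness do the real work; everything else is quotation.

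So I would fix a linking groupoid $\mathcal{L}$ as in $(3)$, with $\mathcal{L}^{(0)} = X \sqcup Y$ a decomposition into complementary full clopen sets and $\G = \mathcal{L}|_X$, $\HH = \mathcal{L}|_Y$. Write $Z = \{g \in \mathcal{L} : r(g) \in X,\ s(g) \in Y\}$. The heart of the argument is to produce a continuous section $\theta_X : X \to \mathcal{L}$ of $r$ with image in $Z$ (and symmetrically $\theta_Y : Y \to \mathcal{L}$ with $r \circ \theta_Y = \id_Y$ and image in $Z^{-1}$). To build $\theta_X$: since $Y$ is full and $\mathcal{L}$ is ample, each $x \in X$ lies in the range $r(U)$ of some compact open bisection $U$ with $s(U) \subseteq Y$, and these ranges form an open cover of $X$. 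Because $X$ is zero-dimensional and $\sigma$-compact, I can refine this cover to a countable partition of $X$ into disjoint compact open sets $\{X_n\}$, each contained in some such $r(U_n)$, and set $\theta_X|_{X_n} := (r|_{U_n})^{-1}$. The resulting $\theta_X$ is continuous, has range a disjoint union of compact open bisections, and satisfies $r \circ \theta_X = \id_X$ with $s(\theta_X(x)) \in Y$.

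Given such sections, I would define $\rho : \G \to \HH$ and $\sigma : \HH \to \G$ by
\[
\rho(g) = \theta_X(r(g))^{-1}\, g\, \theta_X(s(g)), \qquad \sigma(h) = \theta_Y(r(h))^{-1}\, h\, \theta_Y(s(h)).
\]
Because each $\theta$ is continuous with locally bisective image, $\rho$ and $\sigma$ are continuous and \'etale, and a direct computation using $\theta_X(s(g))\theta_X(s(g))^{-1} = s(g)$ shows they are homomorphisms. To see $\sigma \circ \rho \sim \id_{\G}$, I would set $\kappa(x) = \theta_X(x)\,\theta_Y\big(s(\theta_X(x))\big) \in \G$; then $r(\kappa(x)) = x$, and substituting the definitions of $\rho$ and $\sigma$ and cancelling the paired terms yields the similarity identity of Definition~\ref{def:sim} for $\kappa$. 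The verification that $\rho \circ \sigma \sim \id_{\HH}$ is symmetric. Hence $\G$ and $\HH$ are similar, giving $(3)\Rightarrow(1)$ and closing the cycle $(1)\Rightarrow(2)\Leftrightarrow(3)\Rightarrow(1)$.

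The main obstacle is precisely the construction of the global continuous section $\theta_X$: locally such a section always exists, but patching the local bisective inverses into a single continuous map over all of $X$ is exactly what zero-dimensionality (to chop $X$ into clopen pieces fitting inside the bisection ranges) and $\sigma$-compactness (to do so with countably many pieces, keeping the section continuous) buy us. Without these hypotheses the section can fail to exist and similarity is genuinely weaker than equivalence, as Example~\ref{ex:sim ne equiv} shows; so this step is both the crux of the proof and the place where the ample, $\sigma$-compact setting is essential.
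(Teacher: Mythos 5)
Your proposal is correct, and its skeleton matches the paper's: both quote Proposition~\ref{prp:nonample equiv} for the equivalence of (2)--(5), \cite[Theorem~3.2]{CRS} for the equivalence of (4), (6), (7) and (8), and Lemma~\ref{lem:sim->w.equiv} for (1)$\implies$(2). The difference is in how the loop is closed. The paper proves (8)$\implies$(1) by citing Matui's \cite[Theorem~3.6(2)]{Matui-2012}, which says that in the ample, $\sigma$-compact setting $\G$ is similar to $\G|_U$ for any full open $U \subseteq \G^{(0)}$, and then invokes transitivity of similarity (Remark~\ref{rmk:sim equiv}). You instead prove (3)$\implies$(1) by an explicit construction inside the linking groupoid $\mathcal{L}$: using zero-dimensionality and $\sigma$-compactness to partition $X$ into countably many compact open pieces sitting inside ranges of bisections landing in $Y$, you build a global continuous section $\theta_X$, define $\rho(g) = \theta_X(r(g))^{-1} g\, \theta_X(s(g))$ (and symmetrically $\sigma$), and verify the similarity identity with $\kappa(x) = \theta_X(x)\theta_Y(s(\theta_X(x)))$. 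This computation checks out --- the cancellation $\theta_X(s(g))\theta_X(s(g))^{-1} = s(g)$ gives the homomorphism property, and $\kappa(r(g))\,(\sigma\circ\rho)(g) = g\,\kappa(s(g))$ follows by substitution --- and your section-patching argument is essentially the content of Matui's cited theorem, so what you have done is inline a self-contained proof of the one ingredient the paper outsources. The trade-off: the paper's route is shorter and leans on the literature; yours makes visible exactly where ampleness and $\sigma$-compactness enter (the existence of the global section), which is a genuine expository gain and is consistent with Example~\ref{ex:sim ne equiv} showing the implication fails without those hypotheses.
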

\begin{proof}
\mbox{(1)\;$\implies$\;(2)} follows from Lemma~\ref{lem:sim->w.equiv}.

Proposition~\ref{prp:nonample equiv} shows that (2)--(5) are equivalent, and
\cite[Theorem~3.2]{CRS} shows that(4), (6), (7)~and~(8) are equivalent. In particular, we
have (1)\;$\implies$\;(2)\;$\implies$\;$\cdots$\;$\implies$\;(8).

For \mbox{(8)\;$\implies$\;(1)}, suppose that $U \subseteq \G^{(0)}$ and $V \subseteq
\mathcal{H}^{(0)}$ are full open sets with $\G|_U \cong \mathcal{H}|_V$. Matui proves in
\cite[Theorem~3.6(2)]{Matui-2012} that $\G$ is similar to $\G|_U$ and $\mathcal{H}$ is
similar to $\mathcal{H}|_V$. Since any isomorphism of groupoids is a similarity, and
since similarity of groupoids is an equivalence relation (Remark~\ref{rmk:sim equiv}), it
follows that $\G$ and $\mathcal{H}$ are similar.
\end{proof}

To close the section, we present an example to show that groupoid equivalence does not
imply either similarity or weak Kakutani equivalence in general.
We also show that weak Kakutani equivalence is not an equivalence relation.

\begin{example}\label{ex:sim ne equiv}
Let $Y := \R$ and $X := S^1$ and define $\psi: Y \to X$ by $\psi(y) = e^{2{\pi}iy}$. Then
$\psi$ is a local homeomorphism and the groupoid $R(\psi)$ (see Example~\ref{ex:rpsidef})
is equivalent to the trivial groupoid $X$ (see Remarks~\ref{rmk:weakeq}(1)).

We claim that there is no similarity $\rho: X \to R(\psi)$. Indeed suppose that such a
$\rho$ exists. Since $\rho$ is a groupoid map, we have $\rho(X^{(0)}) \subseteq
R(\psi)^{(0)}$. Identifying $R(\psi)^{(0)} = \R$ and $X^{(0)} = S^1$ we obtain a
continuous map $\rho: S^1 \to \R$. Since $\rho$ is a similarity it induces a bijective
map on orbits (see Remark~\ref{rmk:sim orbit maps}). Since the orbits in $X$ are
singletons, this implies that $\rho$ is injective which is impossible.

We also claim that $X$ and $R(\psi)$ are not weakly Kakutani equivalent. To see this,
suppose that $U$ is a full open subset of $X^{(0)}$. Since $X$ is trivial, we have $U =
X^{(0)} = S^1$, which is not homeomorphic to any open subset of $\R = R(\psi)^{(0)}$. So
there is no full open subset $V \subseteq R(\psi)^{(0)}$ such that $X|_U \cong R(\psi)|_V$, and so
the two groupoids are not weakly Kakutani equivalent.

Consider the local homeomorphism $\varphi: X \sqcup Y \to X$ given by
\[
\varphi(z) :=
\begin{cases}
 z & \text{if  } z \in X, \\
 \psi(z) &  \text{if  } z \in Y.
\end{cases}
\]
Then $X$  and $R(\psi)$ are each weakly Kakutani equivalent to
$R(\varphi)$ but as shown above $X$ is not weakly Kakutani equivalent to $R(\psi)$.
\end{example}

\section{Crainic--Moerdijk--Matui homology for ample Hausdorff groupoids}\label{sec:gpd homology}

Crainic and Moerdijk introduced a compactly supported homology theory for Hausdorff
\'etale groupoids in \cite{CM}. Matui reframed the theory for ample Hausdorff groupoids
(though he did not explicitly require this; see \cite[Definition~3.1]{Matui-2012}). To
use the results of \cite{CM} we must ensure that the standing assumptions of
\cite[Section~2.5]{CM}) are satisfied. We therefore require that all groupoids we
consider henceforth are locally compact, Hausdorff, second countable, and zero
dimensional.

For the reader's convenience we recall Matui's definition of homology for an ample
Hausdorff groupoid $\G$ (see \cite[Section~3.1]{Matui-2012}). Since a locally constant
sheaf over such a groupoid with values in a discrete abelian group is $c$-soft (see
Section~\ref{subsec:csoft}), this agrees with the definition given by Crainic and
Moerdijk \cite[Section~3.1]{CM} under our standing assumptions.

We first need to establish some notation. Given a locally compact Hausdorff
zero-dimensional space $X$ and a discrete abelian group $A$, let $C_c(X, A)$ denote the
set of compactly supported $A$-valued continuous (equivalently, locally constant)
functions on $X$. Then $C_c(X, A)$ is an abelian group under pointwise addition. Given a
(not necessarily surjective) local homeomorphism $\psi: Y \to X$ between two such spaces,
as in \cite[Section~3.1]{Matui-2012} we define a homomorphism $\psi_*: C_c(Y, A) \to
C_c(X, A)$ by
\begin{equation}\label{eq:homology induced map}
\psi_*(f)(x) := \sum_{\psi(y) = x} f(y) \quad\text{ for all } f \in C_c(Y, A), \text{ and } x \in X.
\end{equation}
If $U \subseteq Y$ is compact open and $\psi|_U$ is injective, then
$\psi_*(1_U) = 1_{\psi(U)}$, where $1_U$ is the indicator function of $U$.

Recall that for $n > 0$, the space of composable $n$-tuples in a groupoid $\G$ is
\begin{equation} \label{eq:gndef}
\G^{(n)} = \{ (g_1, \dots, g_n) \in \G^n : s(g_i) = r(g_{i+1}) \ \text{for } 1 \le i < n \},
\end{equation}
while $\G^{(0)}$ is the unit space.  For $n \ge 2$ and $0 \le i \le n$ we define $d_i :
\G^{(n)} \to \G^{(n-1)}$ by
\[
d_i(g_1, \dots, g_n) :=
\begin{cases}
(g_2, \dots, g_n) & i = 0, \\
(g_1, \dots, g_ig_{i+1}, \dots, g_n) & 1 \le i \le n-1, \\
(g_1, \dots, g_{n-1}) & i = n.
\end{cases}
\]
Note that $\G^{(n)}$ is $0$-dimensional and each $d_i$ is a local homeomorphism.

\begin{definition}\label{dfn:gpd homology}
Let $\G$ be a second-countable ample Hausdorff groupoid. For $n \ge 1$ define $\partial_n:
C_c(\G^{(n)}, A) \to C_c(\G^{(n-1)}, A)$ by
\[
\partial_1 = s_* - r_*\qquad\text{ and }\qquad\partial_n := \sum_{i=0}^n (-1)^i(d_i)_* \quad \text{for $n \ge 2$},
\]
and define $\partial_0$ to be the zero map from $C_c(\G^{(0)}, A)$ to $0$. Routine
calculations show that this defines a chain
complex $(C_c(\G^{(*)}, A), \partial_*)$. We define the homology of $\G$ with values in
$A$ to be the homology of this complex, denoted $H_*(\G, A)$. If $A = \Z$ we simply write
$H_*(\G)$.
\end{definition}

\begin{remark}
An ample groupoid with one unit is just a  discrete group. In this instance the groupoid
homology just defined coincides with group homology, see
\cite[Section~2.22]{crainic-general-case}, and also \cite{brown}.
\end{remark}

Matui shows in \cite[Proposition 3.5]{Matui-2012} that if  $\G$ and $\HH$ are similar,
then $H_*(\G, A) \cong H_*(\HH, A)$ for any discrete abelian group $A$.  So
Theorem~\ref{thm:equivalences} implies that if $\G$ and $\HH$ are ample and have
$\sigma$-compact unit spaces and are equivalent via any of the eight notions of
equivalence listed in the statement of the theorem, then their homologies coincide. We
will also an explicit description of the isomorphism.

\begin{lemma}\label{lem:morita-invariance}
Let $\G$ and $\mathcal{H}$ be ample Hausdorff groupoids with $\sigma$-compact unit
spaces. If the pair $\G$ and $\mathcal{H}$ satisfies any of the eight equivalent
conditions in Theorem~\ref{thm:equivalences}, then $H_n(\G) \cong H_n(\HH)$. In
particular, if $\rho: \G \to \mathcal{H}$ is a similarity (see Definition \ref{def:sim})
then it induces an isomorphism $\rho_*: H_n(\G) \cong H_n(\HH)$. If $X$ is a full open
subset of $\G^{(0)}$ then the inclusion $\G |_X \subseteq \G$ is a similarity and induces
an isomorphism $H_* (\G|_X) \cong H_* (\G)$.
\end{lemma}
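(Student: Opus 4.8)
The plan is to assemble the result from the three pieces already available in the paper. The core analytic input is Matui's Proposition~3.5 of \cite{Matui-2012}, which tells us that \emph{similar} ample Hausdorff groupoids have isomorphic homology, together with an explicit chain-map construction on the complex $(C_c(\G^{(*)}, A), \partial_*)$. Theorem~\ref{thm:equivalences} tells us that all eight listed notions of equivalence coincide for ample Hausdorff groupoids with $\sigma$-compact unit spaces, and in particular each implies similarity (condition~(1)). So the first claim of the lemma, that any of the eight conditions gives $H_n(\G) \cong H_n(\HH)$, reduces immediately to the implication ``similar $\implies$ isomorphic homology,'' which is Matui's result applied after invoking the theorem.

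For the second claim, that a similarity $\rho : \G \to \HH$ induces the isomorphism, **I would** unwind Matui's construction to exhibit $\rho_*$ explicitly at the level of chains. Given a similarity, one has an \'etale homomorphism $\rho$ (and its partner $\sigma : \HH \to \G$) together with continuous maps $\theta$ implementing $\sigma\circ\rho \sim \id_\G$ and $\rho\circ\sigma \sim \id_\HH$. The homomorphism $\rho$ induces maps $\rho^{(n)} : \G^{(n)} \to \HH^{(n)}$ by applying $\rho$ coordinatewise; since $\rho$ is \'etale these are local homeomorphisms, so the pushforward formula \eqref{eq:homology induced map} gives chain-level maps $\rho^{(n)}_* : C_c(\G^{(n)}, A) \to C_c(\HH^{(n)}, A)$. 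These commute with the face maps $d_i$ because $\rho$ is a functor, hence they form a chain map and descend to $\rho_* : H_n(\G) \to H_n(\HH)$. The similarity partner $\sigma$ gives a chain map in the reverse direction, and the similarity-implementing maps $\theta, \eta$ produce chain homotopies witnessing that $\sigma_* \circ \rho_*$ and $\rho_* \circ \sigma_*$ are homotopic to the identity; this is precisely the content of Matui's proof, so the induced map on homology is the claimed isomorphism.

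For the final assertion about a full open $X \subseteq \G^{(0)}$, **I would** argue that the inclusion $\iota : \G|_X \hookrightarrow \G$ is itself a similarity and then apply the preceding paragraph. Fullness of $X$ is exactly what Matui's \cite[Theorem~3.6(2)]{Matui-2012} needs to conclude that $\G$ is similar to $\G|_X$; the similarity is implemented by the inclusion together with a choice, for each $u \in \G^{(0)}$, of an element carrying $u$ into $X$ (possible by fullness). Since $\iota$ is manifestly an \'etale homomorphism, the second claim applies and yields the isomorphism $H_*(\G|_X) \cong H_*(\G)$ induced by the inclusion.

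**The main obstacle** is verifying that the inclusion $\G|_X \subseteq \G$ genuinely qualifies as a similarity in the sense of Definition~\ref{def:sim}, rather than merely establishing that $\G$ and $\G|_X$ are similar via some abstract pair of maps. One must produce an explicit \'etale homomorphism $\G \to \G|_X$ serving as the similarity partner to the inclusion, using fullness to select, coherently and continuously on compact open pieces, retraction data carrying orbits back into $X$; the continuity and \'etale property of this retraction need a little care, since a global continuous choice need not exist and one works instead with a partition of the unit space into compact open bisection pieces. This is where $\sigma$-compactness and zero-dimensionality of the unit space are used, mirroring the techniques in the proof of Matui's \cite[Theorem~3.6]{Matui-2012}.
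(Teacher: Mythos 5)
Your proposal is correct and follows essentially the same route as the paper: reduce the general statement to a single notion of equivalence via Theorem~\ref{thm:equivalences}, invoke Matui's \cite[Proposition 3.5]{Matui-2012} for the fact that a similarity induces an isomorphism on homology, and use the argument of \cite[Theorem 3.6]{Matui-2012} to see that the inclusion $\G|_X \subseteq \G$ is itself a similarity when $X$ is full and open. The only cosmetic difference is that the paper handles the first claim by passing through Morita equivalence and citing \cite[Corollary 4.6]{CM}, whereas you pass through similarity and cite Matui; given Theorem~\ref{thm:equivalences} these are interchangeable.
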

\begin{proof}
Crainic and Moerdijk show that a Morita equivalence between Hausdorff \'etale groupoids
induces an isomorphism between their homology groups  (see \cite[Corollary 4.6]{CM}). The
proof of \cite[Proposition 3.5]{Matui-2012} shows that if $\rho:  \G \to \mathcal{H}$ is
a similarity then $\rho$ induces an isomorphism $H_n ( \G ) \cong H_n ( \HH)$ for all $n
\ge 0$. Let $X$ be a full open subset of $\G^{(0)}$ then the argument of \cite[Theorem
3.6]{Matui-2012} proves that the inclusion $\G |_X \subseteq \G$ is a similarity.  Hence,
the inclusion map induces an isomorphism $H_* (\G|_X ) \cong H_* ( \G  )$.
\end{proof}

\begin{proposition}\label{prop:trivial-groupoid}
Let $X$ be a 0-dimensional space.  If we regard  $X$ as an ample groupoid with $X^{(0)} =
X$ and with trivial multiplication then
\[
H_n(X) =
\begin{cases}
C_c(X, \Z) & \text{if } n = 0, \\
0 & \text{otherwise.}
\end{cases}
\]
\end{proposition}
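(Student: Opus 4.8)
The plan is to compute the chain complex of Definition~\ref{dfn:gpd homology} directly and show it collapses to an alternating string of identity maps. Since every element of the trivial groupoid $X$ is a unit, we have $r = s = \id_X$, so the composability condition $s(g_i) = r(g_{i+1})$ defining $X^{(n)}$ in \eqref{eq:gndef} forces $g_i = g_{i+1}$ for every $i$. Thus the diagonal map $\delta_n \colon x \mapsto (x, \dots, x)$ is a homeomorphism of $X$ onto $X^{(n)}$ for each $n \ge 0$, and I would use these to identify $C_c(X^{(n)}, \Z) \cong C_c(X, \Z)$ throughout.

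Next I would trace each face map through this identification. For $n \ge 2$ and $0 \le i \le n$, evaluating $d_i$ on a diagonal tuple $(x, \dots, x)$ returns $(x, \dots, x)$ again: the cases $i = 0$ and $i = n$ simply delete a coordinate, while for $1 \le i \le n-1$ the product $g_i g_{i+1} = x \cdot x = x$ collapses to $x$. Hence under the identifications $\delta_n$, every $d_i$ becomes $\id_X$, and since $\id_X$ is injective on compact open sets, \eqref{eq:homology induced map} gives $(d_i)_* = \id$ on $C_c(X, \Z)$. Likewise $\partial_1 = s_* - r_* = \id - \id = 0$.

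Consequently, for $n \ge 2$ the boundary map is $\partial_n = \sum_{i=0}^n (-1)^i (d_i)_* = \big(\sum_{i=0}^n (-1)^i\big)\id$, and the alternating sum of the $n+1$ terms equals $0$ when $n$ is odd and $1$ when $n$ is even. Together with $\partial_0 = \partial_1 = 0$, the complex therefore reads $\cdots \xrightarrow{0} C_c(X,\Z) \xrightarrow{\id} C_c(X,\Z) \xrightarrow{0} C_c(X,\Z) \xrightarrow{0} 0$, with the $\id$ maps in even degrees $\ge 2$ and zero maps in odd degrees. Reading off homology then gives $H_0 = C_c(X,\Z)/0 = C_c(X,\Z)$, while in each positive degree the relevant kernel and image cancel: in odd degrees $\operatorname{im}\partial_{n+1} = \operatorname{im}\id$ fills the whole kernel, and in even degrees $\ge 2$ the map $\partial_n = \id$ is injective so the kernel already vanishes. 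There is no real obstacle here; the only point requiring care is the bookkeeping of the alternating sign, i.e.\ confirming that the identity maps land precisely in the even degrees so that kernels and images align to kill all higher homology.
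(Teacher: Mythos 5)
Your computation is correct. It is worth noting, though, that it takes a more explicit route than the paper, whose entire proof is the one-line assertion that ``the boundary maps for the groupoid $X$ are all trivial and there are no nondegenerate $n$-chains for $n \ge 1$'' --- in effect an implicit passage to the normalized (nondegenerate) chain complex, in which every tuple consisting of units is degenerate and the complex collapses to $C_c(X,\Z)$ concentrated in degree zero. You instead work directly with the unnormalized complex of Definition~\ref{dfn:gpd homology}, identifying $X^{(n)}$ with $X$ via the diagonal, and in doing so you expose a point that the paper's phrasing glosses over: the boundary maps of that complex are \emph{not} all zero. As you show, $\partial_{2m} = \id$ for $m \ge 1$ while $\partial_{2m+1} = 0$, so the higher homology vanishes not because the chain groups or differentials are trivial, but because the complex is exact in positive degrees --- in odd degrees $\operatorname{image}(\partial_{n+1}) = \ker(\partial_n) = C_c(X,\Z)$, and in even degrees $\ge 2$ the kernel of $\partial_n = \id$ is already zero. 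Both routes are valid, since the normalized and unnormalized complexes compute the same homology, but yours has the advantage of being self-contained relative to the definition actually stated in the paper, at the modest cost of the sign bookkeeping you flag.
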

\begin{proof} The boundary maps for the groupoid $X$ are all trivial and there are no
nondegenerate $n$-chains for $n \ge 1$.
\end{proof}

\begin{remark}\label{rmk:trivial-case}
\begin{enumerate}
\item Following \cite[Definition 3.1]{Matui-2012}, if $\G$ is an ample Hausdorff
    groupoid, then there is a natural preorder on $H_0(\G)$ determined by the cone
    \[
    H_0(\G)^+ := \{ [f] : f \in C_c(\G^{(0)}, \Z) \text{ and } f(x) \ge 0 \text{ for all }x \in \G^{(0)} \}.
    \]
\item For any $0$-dimensional space $X$ regarded as a groupoid as in Proposition~\ref{prop:trivial-groupoid}, we have $C^*(X) \cong C_0(X)$, and
    \[
    K_0(C_0(X)) \cong H_0(X) \cong C_c(X, \Z) ,
    \]
via an isomorphism that carries the positive cone of  $K_0(C_0(X))$  to $H_0(X)^+$.
\item The notion of a type semigroup for the transformation group $(X ,\Gamma), $
    where $X$ is a Cantor set and $\Gamma $ is discrete was introduced in
    \cite{SieraRor}. This idea was generalised by Rainone and Sims in
    \cite[Definition~5.4]{RaiSims}, and independently by B{\"o}nicke and Li
    \cite{BonLi}, who introduced the type semigroup $S(\G)$ of an ample Hausdorff
    groupoid $\G$. The map $[ 1_U ]_{H_0 ( \G )} \mapsto [ 1_U ]_{G(S(\G))}$ induces
    an isomorphism of the homology group $H_0(\G)$ onto the Grothendieck group of
    $S(\G)$. This isomorphism carries $H_0(\G)^+$ to the image of $S(\G)$ in its
    Grothendieck group. In particular, the coboundary subgroup $H_\G$ of
 \cite[Definition~6.4]{RaiSims} is exactly $\ima{\partial_1}$ as defined above.
\end{enumerate}
\end{remark}

\begin{remark}\label{rmk:functoriality}
Homology for ample Hausdorff groupoids is functorial in the following sense. Let $\G$ and
$\HH$ be ample Hausdorff groupoids and let $\phi: \G \to \HH$ be an \'etale groupoid
homomorphism (so in particular, $\phi$ is a local homeomorphism). Then as Crainic and
Moerdijk observe (see \cite[3.7.2]{CM}), the maps $\phi^{(n)}_*: C_c(\G^{(n)}, \Z) \to
C_c(\HH^{(n)}, \Z)$ induce homomorphisms on homology which we denote by $\phi_*: H_n(\G)
\to H_n(\HH)$, and $\phi \mapsto \phi_*$ preserves composition. If $\G$ is an open
subgroupoid of an ample Hausdorff groupoid $\HH$, then $\G$ is also an ample Hausdorff
groupoid and the inclusion map $\iota: \G \to \HH$ is an \'etale groupoid homomorphism.
Hence $\iota$ induces a map $\iota_* : H_* ( \G ) \to H_* ( \HH )$ satisfying $\iota_* [
1_U ]_{H_n ( \G )} = [ 1_U ]_{H_n ( \HH )}$.
\end{remark}

One key point of the functoriality of homology described in the preceding remark is that
it leads to the following notion of continuity for homology of ample Hausdorff groupoids.

\begin{proposition}[cf. \cite{ortega} Lemma 1.5] \label{prop:union-homology}
Let $\G$ be an ample Hausdorff groupoid and let $\{ \G_i \}$ be an increasing sequence of
open subgroupoids of $\G$.  Then
\[
H_* (\G) \cong \varinjlim H_* (\G_i ).
\]
\end{proposition}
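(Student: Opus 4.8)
The plan is to exhibit the isomorphism at the level of chain complexes and then invoke the exactness of the direct limit functor on abelian groups. Assume $\G = \bigcup_i \G_i$, where $\{\G_i\}$ is an increasing sequence of open subgroupoids. First I would observe that, since each $\G_i$ is open in $\G$, the inclusion $\iota_i \colon \G_i \hookrightarrow \G$ is an \'etale groupoid homomorphism, and likewise each inclusion $\G_i \hookrightarrow \G_{i+1}$ is \'etale. By the functoriality recorded in Remark~\ref{rmk:functoriality}, these inclusions induce chain maps on the associated complexes $(C_c(\G_i^{(*)}, \Z), \partial_*)$ that are compatible with composition, so we obtain a direct system of chain complexes indexed by $i$, together with compatible maps into $(C_c(\G^{(*)}, \Z), \partial_*)$.

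The key step is to show that for each fixed $n \ge 0$ the natural map
\[
\varinjlim_i C_c(\G_i^{(n)}, \Z) \longrightarrow C_c(\G^{(n)}, \Z)
\]
is an isomorphism of abelian groups. Openness of each $\G_i$ in $\G$ forces $\G_i^{(n)}$ to be open in $\G^{(n)}$, and $\G^{(n)} = \bigcup_i \G_i^{(n)}$ since a composable tuple lies in some $\G_i^{(n)}$ as soon as all its entries lie in a common $\G_i$ (which happens eventually by the nesting of the $\G_i$). The connecting maps in the direct system are the extension-by-zero inclusions $C_c(\G_i^{(n)}, \Z) \hookrightarrow C_c(\G_{i+1}^{(n)}, \Z)$, which are injective, so the direct limit is realised concretely as the union $\bigcup_i C_c(\G_i^{(n)}, \Z)$ sitting inside $C_c(\G^{(n)}, \Z)$. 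Surjectivity is the one point requiring an argument: given $f \in C_c(\G^{(n)}, \Z)$, its support $K$ is compact, and $\{\G_i^{(n)}\}$ is an increasing open cover of $K$, so by compactness $K \subseteq \G_i^{(n)}$ for some $i$; hence $f$ lies in the image of $C_c(\G_i^{(n)}, \Z)$. Thus the comparison map is a bijection, and it is routine to check it is a group homomorphism commuting with the boundary maps $\partial_n$, so it furnishes an isomorphism of chain complexes $\varinjlim_i C_c(\G_i^{(*)}, \Z) \cong C_c(\G^{(*)}, \Z)$.

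Finally, I would invoke the fact that the direct limit functor is exact on the category of abelian groups and therefore commutes with the formation of homology: for a direct system of chain complexes one has $H_n(\varinjlim_i C_\bullet^i) \cong \varinjlim_i H_n(C_\bullet^i)$. Combining this with the chain-complex isomorphism above yields
\[
H_n(\G) = H_n\big(C_c(\G^{(*)}, \Z)\big) \cong H_n\big(\varinjlim_i C_c(\G_i^{(*)}, \Z)\big) \cong \varinjlim_i H_n\big(C_c(\G_i^{(*)}, \Z)\big) = \varinjlim_i H_n(\G_i),
\]
as required. The main obstacle is purely the compactness argument establishing surjectivity of the comparison map in degree $n$; once that is in place, the identification of homology with the direct limit is a standard homological-algebra fact, and the verification that the maps respect the simplicial boundary operators is a direct consequence of the functoriality already noted in Remark~\ref{rmk:functoriality}.
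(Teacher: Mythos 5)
Your argument is correct and rests on exactly the same key observation as the paper's proof: compactness of supports forces every chain on $\G^{(n)}$ (and every bounding chain witnessing triviality of a class) to live in some $\G_i^{(n)}$ for $i$ large, since the $\G_i^{(n)}$ form a nested open cover. The only difference is organisational --- you establish the isomorphism at the level of chain complexes and then invoke exactness of $\varinjlim$, whereas the paper checks injectivity and surjectivity of the comparison map $\varinjlim H_*(\G_i) \to H_*(\G)$ directly on homology classes --- so the two proofs are essentially the same.
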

\begin{proof}
For each $i$ the inclusion map $\G_i \hookrightarrow \G$ induces a homomorphism $\iota_i
: H_*(\G_i) \to H_*(\G)$ by Remark~\ref{rmk:functoriality}. So the universal property of
the direct limit yields a homomorphism $\iota_\infty : \varinjlim H_* (\G_i ) \to H_*
(\G)$. This homomorphism is injective because if $\iota_\infty(a)$ is a boundary, say
$\iota_\infty(a) = \partial_n(f)$, then $f \in C_c(\G_i^{(n+1)})$ for large enough $i$,
and then $a = \partial_n(f)$ belongs to $B_n(\G_i)$. It is surjective because if $U$ is a
compact open subset of $\G^{(n)}$, then $U \subseteq \bigcup_i \G^{(n)}_i$, and since the
$\G_i^{(n)}$ are open and nested, it follows that $U$ is a compact open subset of
$\G_i^{(n)}$ for large $i$. Hence every generator of $H_*(\G)$ belongs to the image of
$\iota_\infty$.
\end{proof}

\begin{lemma}\label{lem:sim}
Let $X, Y$ be locally compact Hausdorff spaces, and let $\psi : Y \to X$ be a local homeomorphism.
Suppose that there exists a continuous open section $\varphi : X \to Y$ of $\psi$.
Then the groupoid maps $\rho: R(\psi) \to X$ given by $\rho(y_1, y_2) = \psi(y_1)$ and
$\sigma: X \to R(\psi)$ given by $\sigma(x) = (\varphi(x), \varphi(x))$ are both similarities.
Indeed, $\sigma \circ \rho$ is similar to $\id_{R(\psi)}$ and $\rho \circ \sigma$ is similar to
$\id_{X}$.
Moreover, the induced maps $\rho_*: H_*(R(\psi)) \to H_*(X)$ and $\sigma_*: H_*(X) \to H_*(R(\psi))$
are inverse to each other.
\end{lemma}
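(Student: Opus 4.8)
The plan is to establish that $\rho$ and $\sigma$ are mutually inverse similarities in the strong form $\rho\circ\sigma = \id_X$ (genuine equality) and $\sigma\circ\rho$ similar to $\id_{R(\psi)}$, and then to push this through the functoriality of homology. First I would check that $\rho$ and $\sigma$ are \'etale homomorphisms, as Definition~\ref{def:sim} demands. For $\sigma$, note that $\varphi$ is injective because $\psi\circ\varphi = \id_X$, so the continuous open map $\varphi$ is a homeomorphism onto the open set $\varphi(X)\subseteq Y$; hence $\sigma$ is a homeomorphism of $X$ onto the open subset $\{(\varphi(x),\varphi(x)) : x\in X\}$ of the unit space of $R(\psi)$. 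For $\rho$, on any open bisection $U$ of $R(\psi)$ the first-coordinate projection agrees with the range map and so is a homeomorphism onto an open subset $V\subseteq Y$; shrinking $U$ so that $\psi|_V$ is injective shows that $\rho|_U$ is $\psi$ composed with this projection, hence a homeomorphism onto an open subset of $X$. Both maps are therefore local homeomorphisms, and it is routine that they respect the (trivial) multiplication on $X$ and the multiplication on $R(\psi)$.

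Next I would treat the two composites. A direct computation gives $(\rho\circ\sigma)(x) = \psi(\varphi(x)) = x$, so $\rho\circ\sigma = \id_X$, which is trivially similar to $\id_X$. For the other composite, $(\sigma\circ\rho)(y_1,y_2) = (\varphi\psi(y_1),\varphi\psi(y_1))$, and I would exhibit a similarity to $\id_{R(\psi)}$ using the continuous map $\theta : R(\psi)^{(0)}\to R(\psi)$ given by $\theta(y,y) = (y,\varphi(\psi(y)))$, which lands in $R(\psi)$ since $\psi(\varphi(\psi(y))) = \psi(y)$. Verifying the defining identity $\theta(r(g))(\sigma\circ\rho)(g) = g\,\theta(s(g))$ for $g = (y_1,y_2)$ reduces, after carrying out the two products in $R(\psi)$, to the equality $(y_1,\varphi\psi(y_1)) = (y_1,\varphi\psi(y_2))$, which holds because $\psi(y_1) = \psi(y_2)$ forces $\varphi\psi(y_1) = \varphi\psi(y_2)$. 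Together these show that $\rho$ and $\sigma$ constitute a similarity of groupoids.

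For the homology statement I would invoke the functoriality of Remark~\ref{rmk:functoriality}: since $\rho$ and $\sigma$ are \'etale homomorphisms they induce $\rho_*$ and $\sigma_*$, and $\phi\mapsto\phi_*$ preserves composition and sends identities to identities. From $\rho\circ\sigma = \id_X$ I get $\rho_*\circ\sigma_* = \id_{H_*(X)}$ immediately. For the reverse composite I would use the fact, established in the proof of \cite[Proposition~3.5]{Matui-2012} and already invoked in Lemma~\ref{lem:morita-invariance}, that similar homomorphisms induce equal maps on homology; this gives $\sigma_*\circ\rho_* = (\sigma\circ\rho)_* = (\id_{R(\psi)})_* = \id_{H_*(R(\psi))}$.

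The hard part will be the last point: upgrading ``a similarity induces an isomorphism on homology'' to ``similar homomorphisms induce the same map on homology,'' which is precisely what forces $\rho_*$ and $\sigma_*$ to be genuinely mutually inverse rather than merely a pair of isomorphisms. This rests on the chain-homotopy construction underlying Matui's Proposition~3.5, and I would make sure to cite it in that sharper form. The only other place needing care is the choice of $\theta$: one must place the twist $\varphi\psi(y)$ in the source coordinate rather than the range coordinate, so that the range condition $r(\theta(u)) = u$ required by the right-hand product $g\,\theta(s(g))$ is satisfied.
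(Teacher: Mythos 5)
Your proof is correct and follows essentially the same route as the paper: an explicit transfer function $\theta$ exhibiting the similarity of $\sigma\circ\rho$ with $\id_{R(\psi)}$, the observation that $\rho\circ\sigma=\id_X$ on the nose, and an appeal to \cite[Proposition~3.5]{Matui-2012} (in the sharper form that similar homomorphisms induce equal maps on homology) for the last assertion. The only cosmetic difference is that the paper takes $\theta(y,y)=(\varphi\circ\psi(y),y)$ and verifies the symmetric version of the similarity identity, whereas you take the transpose $(y,\varphi\circ\psi(y))$; both are valid since similarity of homomorphisms is a symmetric relation.
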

\begin{proof}
The first assertion follows from the second. To prove the second assertion, define
$\theta: R(\psi)^{(0)} \to R(\psi)$ by $\theta(y, y) = (\varphi\circ\psi(y), y)$. Then
\begin{align*}
\sigma \circ \rho(y_1, y_2)\theta(y_2, y_2) &=  (\varphi\circ\psi(y_1), \varphi\circ\psi(y_2))(\varphi\circ\psi(y_2), y_2)   =  (\varphi\circ\psi(y_1), y_2) \\
&= (\varphi\circ\psi(y_1), y_1)(y_1, y_2) = \theta(y_1, y_1) \id_{R(\psi)}(y_1, y_2).
\end{align*}
Hence, $\sigma \circ \rho$ is similar to $\id_{R(\psi)}$.  Since $\rho \circ \sigma =  \id_{X}$, it follows that $\rho \circ \sigma$ is similar to $\id_X$.
The last assertion now follows from \cite[Proposition 3.5]{Matui-2012}.
\end{proof}

\begin{definition}[{cf. \cite[Definition~2.2]{Matui-2012}}]\label{def:AF}
An ample groupoid $\G$ is said to be \textit{elementary} if it is isomorphic to the
groupoid $R(\psi)$  of Example~\ref{ex:rpsidef} for some local homeomorphism $\psi: Y \to
X$ between 0-dimensional spaces. An ample groupoid $\G$ is said to be \textit{AF} if it
can be expressed as a union of open elementary subgroupoids.
\end{definition}

The only point of difference between Definition~\ref{def:AF} and Matui's
\cite[Definition~2.2]{Matui-2012} is that we allow non-compact unit spaces.

For the following result, recall that if $\psi : Y \to X$ is a local homeomorphism then
the homomorphism $\psi_* : C_c(Y, \Z) \to C_c(X, \Z)$ is given in \eqref{eq:homology induced map}.
There is also an inclusion $\iota : C_0(Y) \hookrightarrow C^*(R(\psi))$ induced
by the homeomorphism $Y \cong R(\psi)^{(0)}$, and this induces a homomorphism $\iota_* :
C_c(Y, \Z) \to K_0(C^*(R(\psi))$.

\begin{theorem}\label{thm:R(psi) homology}
Let $X, Y$ be locally compact Hausdorff spaces, and let $\psi : Y \to X$ be a local
homeomorphism. Then $Y$ is an $R(\psi)$--$X$ equivalence with anchor maps $\id : Y \to
R(\psi)^{(0)}$ and $\psi : Y \to X$, right action of $X$ given by $y \cdot \psi(y) = y$,
and left action given by $(x,y) \cdot y = x$. Hence $H_*(R(\psi)) \cong H_*(X)$.

If $Y$ is $\sigma$-compact and totally disconnected, then the map $\psi$ admits a continuous open section and the map
$\rho: R(\psi) \to X$ given by $\rho(y_1, y_2) = \psi(y_1)$ is a similarity and thus induces the isomorphism
$H_0(R(\psi)) \cong H_0(X)  = C_c(X, \Z)$ determined by $[1_U] \mapsto \psi_*(1_U)$ for $U \subseteq Y$ compact and open.
We have $H_n(R(\psi)) = 0$ for $n \ge 1$.

The groupoid $C^*$-algebra $C^*(R(\psi))$ is an AF algebra, the map $\psi_*$ induces an
isomorphism $K_0(C^*(R(\psi))) \to C_c(X, \Z)$ such that the diagram
\[
\begin{tikzpicture}
    \node (a) at (0, 2) {$C_c(Y, \Z)$};
    \node (b) at (0, 0) {$K_0(C^*(R(\psi)))$};
    \node (c) at (4, 0) {$C_c(X, \Z)$};
    \draw[->] (a)--(c) node[pos=0.5, above] {$\psi_*$};
    \draw[->] (a)--(b) node[pos=0.5, left] {$\iota_*$};
    \draw[->] (b)--(c) node[pos=0.5, above] {$\cong$};
\end{tikzpicture}
\]
commutes, and we have $K_1(C^*(R(\psi))) = \{0\}$.
\end{theorem}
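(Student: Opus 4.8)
The plan is to establish the three groups of assertions in turn. First I would verify directly that $Y$ is an $R(\psi)$--$X$ equivalence by checking the five defining conditions. The right $X$-action is trivial, so its freeness and properness and the homeomorphism $Y/X \cong R(\psi)^{(0)}$ are immediate. For the left $R(\psi)$-action, the map $(g,y)\mapsto (g\cdot y, y)$ is exactly the inclusion $R(\psi)\hookrightarrow Y\times Y$, which is a proper injection since $R(\psi)$ is closed in $Y\times Y$ (here $X$ being Hausdorff is used); this gives freeness and properness. The $R(\psi)$-orbits in $Y$ are precisely the fibres of $\psi$, so $\psi$ induces the homeomorphism $R(\psi)\backslash Y\cong X$ (which also forces $\psi$ to be surjective, as any equivalence requires). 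Granting the equivalence, the isomorphism $H_*(R(\psi))\cong H_*(X)$ follows because groupoid equivalence preserves homology: by Proposition~\ref{prp:nonample equiv} the two groupoids are Morita equivalent, and Morita equivalent \'etale groupoids have isomorphic homology by \cite[Corollary~4.6]{CM}.

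Assuming now that $Y$ is $\sigma$-compact and totally disconnected, I would produce a continuous open section $\varphi$ of $\psi$ and then appeal to Lemma~\ref{lem:sim}. Since $Y$ has a basis of compact open sets and $\psi$ is a local homeomorphism, each point of $Y$ lies in a compact open set on which $\psi$ is injective; $\sigma$-compactness yields a countable such cover, which I would disjointify to a partition $Y=\bigsqcup_n W_n$ into compact open sets with each $\psi|_{W_n}$ a homeomorphism onto a compact open subset of $X$. Disjointifying the images $\psi(W_n)$, which cover $X$ by surjectivity, partitions $X$ into compact open sets over which the local inverses $(\psi|_{W_n})^{-1}$ patch to a continuous open section $\varphi$. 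Lemma~\ref{lem:sim} then shows $\rho$ is a similarity and that $\rho_*$ and $\sigma_*$ are mutually inverse isomorphisms of homology; with Proposition~\ref{prop:trivial-groupoid} this gives $H_n(R(\psi))=0$ for $n\ge 1$ and $H_0(R(\psi))\cong C_c(X,\Z)$. Because $\rho$ restricts to $\psi$ on the unit space $R(\psi)^{(0)}=Y$, the induced map on $H_0$ is the map $\psi_*$ of \eqref{eq:homology induced map}, so the isomorphism is $[1_U]\mapsto \psi_*(1_U)$.

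For the $C^*$-algebraic claims I would proceed via the Morita equivalence of $C^*(R(\psi))$ with $C_0(X)$ recorded in Remark~\ref{rmk:weakeq}(i). Since $X$ is totally disconnected, $C_0(X)$ is AF; as Morita equivalence of separable $C^*$-algebras implies stable isomorphism and AF-ness is inherited by hereditary subalgebras and preserved under stabilisation, $C^*(R(\psi))$ is AF. The same Morita equivalence gives $K_1(C^*(R(\psi)))\cong K_1(C_0(X))=0$ and $K_0(C^*(R(\psi)))\cong K_0(C_0(X))\cong C_c(X,\Z)$ by Remark~\ref{rmk:trivial-case}(2). To check that the diagram commutes I would compute the Rieffel correspondence on the standard imprimitivity bimodule given by the completion of $C_c(Y)$: for a compact open $U\subseteq Y$ with $\psi|_U$ injective one has $\iota(1_U)=1_{U'}$, the corresponding projection on the unit space, and cutting the bimodule by $1_{U'}$ yields $C_c(U)$, which—because $\psi|_U$ is injective—is isomorphic as a Hilbert $C_0(X)$-module to $1_{\psi(U)}C_0(X)$. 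Hence $[1_{U'}]$ corresponds to $[1_{\psi(U)}]=\psi_*(1_U)$. Since the functions $1_U$ with $\psi|_U$ injective generate $C_c(Y,\Z)$ and all the maps are homomorphisms, the diagram commutes.

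The main obstacle I anticipate is precisely this last step. The earlier parts are direct verifications or immediate applications of Lemma~\ref{lem:sim}, Proposition~\ref{prop:trivial-groupoid} and the cited Morita-invariance results; but matching the abstract $K_0$-isomorphism with $\psi_*$ requires making the groupoid Morita equivalence explicit as a $C^*$-imprimitivity bimodule and carefully tracking the Rieffel correspondence on generating projections, turning the topological fact that $\psi|_U$ is injective into the $K$-theoretic identity $[1_{U'}]\mapsto[1_{\psi(U)}]$.
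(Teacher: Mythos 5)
Your proposal is correct and follows essentially the same route as the paper: direct verification of the equivalence axioms, a disjointification argument producing a continuous open section so that Lemma~\ref{lem:sim} applies, and the Muhly--Renault--Williams Morita equivalence with $C_0(X)$ for the AF and $K$-theory claims. The only difference is one of detail: where the paper simply asserts that the $K_0$-isomorphism carries $[1_U]$ to $1_{\psi(U)}$, you spell out the Rieffel correspondence on the imprimitivity bimodule, which is a worthwhile elaboration but not a different argument.
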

\begin{proof}
For the first statement, one just checks directly that the maps described satisfy the
axioms for an equivalence of groupoids. The isomorphism $H_*(R(\psi)) \cong H_*(X)$
follows from Lemma~\ref{lem:morita-invariance}.

Now suppose that $Y$ is $\sigma$-compact and totally disconnected. Then $X$ is also
$\sigma$-compact and totally disconnected. Choose a cover $Y = \bigcup^\infty_{i=1} U_i$
of $Y$ by countably many compact open sets. Since $\psi$ is a local homeomorphism and the
$U_i$ are compact, each $U_i$ is a finite union of compact open sets on which $\psi$ is
injective, so by relabelling we may assume that the $U_i$ have this property. For each
$i$, let $V_i := U_i \setminus \big(\bigcup^{i-1}_{j=1} \psi^{-1}(\psi(U_j))\big)$. Then
$X = \bigsqcup_i \psi(V_i)$, and the $V_i$ are compact open sets on which $\psi$
restricts to a homeomorphism $\psi_i : V_i \to \psi(V_i)$. So we can define a continuous
section $\varphi$ for $\psi$ by setting $\varphi|_{\psi(V_i)} = \psi_i^{-1} : \psi(V_i)
\to V_i$. Hence Lemma~\ref{lem:sim} yields a similarity $\rho: R(\psi) \to X$ such that
the restriction of $\rho_*$ to $C_c(Y, \Z)$ coincides with $\psi_*: C_c(Y, \Z) \to C_c(X,
\Z)$.

The equivalence $Y$ of groupoids determines a
Morita equivalence between $C^*(R(\psi))$ and $C_0(X)$ \cite[Theorem~2.8]{MRW}. Since
approximate finite dimensionality is preserved by Morita equivalence and $C_0(X)$ is AF, we see that
$C^*(R(\psi))$ is AF, and the Morita equivalence induces the desired isomorphisms in
$K$-theory. To see that the diagram commutes, suppose that $U \subseteq Y$ is compact
open and that $\psi|_U$ is injective. Then $\iota_*(1_U) = [1_U] \in K_0(C^*(R(\psi)))$,
and this is carried to $1_{\psi(U)}$ by the isomorphism $K_0(C^*(R(\psi))) \to C_c(X,
\Z)$ just described. This is precisely $\psi_*(1_U)$.
\end{proof}

\section{Matui's HK Conjecture}\label{sec:HK}

In \cite[Conjecture 2.6]{Matui-2016} Matui posed the HK conjecture for a certain class of
ample Hausdorff groupoids.   Recall that an  \'{e}tale groupoid $\G$ is said to be
effective if the interior of its isotropy coincides with its unit space $\G^{(0)}$ and
minimal if every orbit is dense.

\begin{mconj}
Let $\G$ be a locally compact Hausdorff  \'{e}tale groupoid such that $\G^{(0)}$ is a
Cantor set. Suppose that $\G$ is both effective and minimal. Then for $j = 0, 1$ we have
\begin{equation}\label{conj:HK}
K_j(C^*_r(\G)) \cong \bigoplus_{i = 0}^{\infty} H_{2i+j}(\G).
\end{equation}
\end{mconj}

We are interested in the extent to which the isomorphism~\eqref{conj:HK} holds amongst
groupoids that do not necessarily have non-compact unit space and are not necessarily
minimal or effective. To streamline our discussion, we make the following definition.

\begin{definition}\label{dfn:class M}
We define $\M$ to be the class of ample Hausdorff groupoids for which the
isomorphism~\eqref{conj:HK} holds.
\end{definition}

Matui proves in \cite[Theorem 4.14]{Matui-2012} that the groupoids associated to shifts
of finite type belong to $\M$ and in \cite[Theorems 4.10~and~4.11]{Matui-2012} that AF
groupoids with compact unit space belong to $\M$. He shows in
\cite[Proposition~2.7]{Matui-2016} that $\M$ is closed under Kakutani equivalence, and he
shows in \cite[Theorem 5.5]{Matui-2016} that $\M$ contains all finite cartesian products
of groupoids associated to shifts of finite type. He proves in
\cite[Section~3.1]{Matui-2012} that $\M$ contains the transformation groupoids of
topologically free and minimal actions of $\Z$ on the Cantor set.

In \cite{HazLi} Hazrat and Li verify \eqref{conj:HK} for $j=0$ in the setting of
groupoids of row-finite $1$-graphs with no sinks. We complete the analysis for $j=1$ in
Theorem~\ref{thm:DR k=1}. In \cite{ortega} Ortega shows that the Katsura-Exel-Pardo
groupoid $\G_{A,B}$ associated to square integer matrices with $A \ge 0$ belongs to $\M$.
Here we consider Deaconu-Renault groupoids and thereby
study higher dimensional aspects not present in other cases.

There are examples of ample Hausdorff groupoids $\G$ that belong to $\M$ but either do
not have compact unit spaces or which are not necessarily effective or minimal. For
example, if $X$ is any noncompact totally disconnected space, then the groupoid $X \times
\Z$ satisfies none of these conditions, but belongs to $\M$. It is also easy to show that
$\Z^n$ is in $\M$ for all $n$. Let $\F_n$ denote the free group on $n$ letters. Then by
\cite[10.8.1]{B} and \cite[II.4.2]{brown} we have
\[
K_0(C^*_r(\F_n)) \cong H_0(\F_n) \cong \Z , \quad
K_1(C^*_r(\F_n)) \cong H_1(\F_n) \cong \Z^n
\]
and $H_i(\F_n)  = 0$ for all $i > 1$.  Hence $\F_n$ lies in $\M$. The integer Heisenberg
group also belongs to $\M$---see \cite[Corollary 1]{kodaka} and \cite[Example
8.24]{knudson}. On the other hand, not every ample Hausdorff groupoid belongs to $\M$:
for example, $\M$ contains no nontrivial finite cyclic group.

Here we expand the class of groupoids known to belong to $\M$. We show that all AF
groupoids, all Deaconu--Renault groupoids associated to actions of $\N$ or $\N^2$ on
0-dimensional spaces, and path groupoids associated to many one-vertex $k$-graphs belong
to $\M$.

By Proposition~\ref{prop:trivial-groupoid} and Remark~\ref{rmk:trivial-case}, any
$0$-dimensional space $X$ regarded as a trivial groupoid belongs to $\M$. More generally,
the following corollary to Theorem~\ref{thm:R(psi) homology} shows that all AF groupoids
belong to $\M$.

\begin{corollary}\label{cor:lim homology}
Let $\G$ be a groupoid that can be expressed as a direct limit $\G = \varinjlim \G_n$ of
open subgroupoids each of which is isomorphic to $R(\psi_n)$ for some local homeomorphism
$\psi_n : \G^{(0)} \to X_n$. Suppose that $\G^{(0)}$ is totally disconnected. Then there
are maps $\varphi_n : X_n \to X_{n+1}$ such that $\varphi_n \circ \psi_n = \psi_{n+1}$
for all $n$. We have $H_n(\G) = 0$ for $n \ge 1$, and $H_0(\G) \cong \varinjlim(H_0(X_n),
(\varphi_n)_*)$.

There is an isomorphism $K_0(C^*(\G)) \cong H_0(\G)$ that carries
$[1_U]_0$ to $[1_U]$ for each compact open $U \subseteq \G^{(0)}$, and we have
$K_1(C^*(\G)) = \{0\}$. In particular, $\G$ belongs to $\M$.
\end{corollary}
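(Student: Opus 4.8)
The plan is to reduce everything to the single-relation case treated in Theorem~\ref{thm:R(psi) homology} and then pass to the direct limit, using continuity of homology (Proposition~\ref{prop:union-homology}) and continuity of $K$-theory. First I would produce the maps $\varphi_n$. After adjusting each $\psi_n$ we may identify $\G_n = R(\psi_n)$ as groupoids with the common unit space $\G^{(0)}$, so that the inclusions $\G_n \subseteq \G_{n+1}$ become inclusions $R(\psi_n) \subseteq R(\psi_{n+1})$ of equivalence relations on $\G^{(0)}$. Thus each fibre of $\psi_n$ lies in a fibre of $\psi_{n+1}$, so $\psi_{n+1}$ factors through $\psi_n$; replacing $X_n$ by the open set $\psi_n(\G^{(0)})$ we may take $\psi_n$ surjective and obtain a unique $\varphi_n : X_n \to X_{n+1}$ with $\varphi_n \circ \psi_n = \psi_{n+1}$. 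Since $\psi_n$ is an open local homeomorphism and $\psi_{n+1}$ is a local homeomorphism, $\varphi_n$ is locally $\psi_{n+1}\circ\psi_n^{-1}$ and hence itself a local homeomorphism, so $(\varphi_n)_*$ is defined on compactly supported integer-valued functions, and a short fibre-counting computation gives $(\varphi_n)_* \circ \psi_{n,*} = \psi_{n+1,*}$.

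Next I would compute the homology. By Theorem~\ref{thm:R(psi) homology} each $\G_n \cong R(\psi_n)$ satisfies $H_m(\G_n) = 0$ for $m \ge 1$ and $H_0(\G_n) \cong C_c(X_n, \Z)$ via $[1_U] \mapsto \psi_{n,*}(1_U)$, and Proposition~\ref{prop:union-homology} gives $H_*(\G) \cong \varinjlim H_*(\G_n)$. In degrees $m \ge 1$ every term vanishes, so $H_m(\G) = 0$. To identify the degree-$0$ limit I would check that the inclusion $\G_n \hookrightarrow \G_{n+1}$ sends $[1_U]$ to $[1_U]$ by Remark~\ref{rmk:functoriality}, which under the above identifications corresponds to $\psi_{n,*}(1_U) \mapsto \psi_{n+1,*}(1_U) = (\varphi_n)_*\psi_{n,*}(1_U)$; hence the connecting maps are exactly the $(\varphi_n)_*$, giving $H_0(\G) \cong \varinjlim(H_0(X_n), (\varphi_n)_*)$.

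Finally I would treat the $K$-theory. As the $\G_n$ are open and nested, $C^*(\G) \cong \varinjlim C^*(\G_n)$, and each $C^*(\G_n) \cong C^*(R(\psi_n))$ is AF by Theorem~\ref{thm:R(psi) homology}, so $C^*(\G)$ is AF and, being amenable, satisfies $C^*(\G) = C^*_r(\G)$. Continuity of $K$-theory yields $K_1(C^*(\G)) \cong \varinjlim K_1(C^*(\G_n)) = 0$ and $K_0(C^*(\G)) \cong \varinjlim K_0(C^*(\G_n))$. Using the commuting triangle of Theorem~\ref{thm:R(psi) homology}, which identifies $[1_U]_0$ with $\psi_{n,*}(1_U)$ and shows these classes generate $K_0(C^*(\G_n))$, together with the compatibility of the maps $\iota_*$ with the inclusions $C^*(\G_n)\hookrightarrow C^*(\G_{n+1})$, the connecting maps are again the $(\varphi_n)_*$. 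Thus $K_0(C^*(\G)) \cong \varinjlim(C_c(X_n,\Z),(\varphi_n)_*) \cong H_0(\G)$ by an isomorphism carrying $[1_U]_0$ to $[1_U]$. Since $H_m(\G) = 0$ for $m \ge 1$, the right-hand sides of~\eqref{conj:HK} collapse to $H_0(\G)$ and $0$, so these isomorphisms show $\G \in \M$.

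I expect the main obstacle to be the two compatibility claims: that under the isomorphisms of Theorem~\ref{thm:R(psi) homology} the inclusion-induced connecting maps on $H_0$ and on $K_0$ both coincide with $(\varphi_n)_*$. Establishing these requires carefully matching the generators $[1_U]$ across the limit and invoking functoriality of both the inclusion maps and the pushforwards $\psi_{n,*}$; everything else follows directly from the cited results.
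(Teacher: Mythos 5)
Your proposal is correct and follows essentially the same route as the paper: apply Theorem~\ref{thm:R(psi) homology} to each $R(\psi_n)$ and then invoke continuity of homology (Proposition~\ref{prop:union-homology}) and of $K$-theory under inductive limits, matching the connecting maps with $(\varphi_n)_*$ via the generators $[1_U]$. The paper's own proof is just a terser version of this, and your explicit construction of the $\varphi_n$ and verification that $(\varphi_n)_*\circ\psi_{n,*}=\psi_{n+1,*}$ correctly fills in details the paper leaves implicit.
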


\begin{proof}
Each $R(\psi_n)^{(0)}$ is totally disconnected because it is an open subspace of $\G^{(0)}$,
and so Theorem~\ref{thm:R(psi) homology} shows that $H_p(R(\psi_n)) = 0$ for $p \ge 1$ and
all $n$, and that $H_0(R(\psi_n)) \cong C_c(X_n, \Z) \cong K_0(C^*(R(\psi_n)))$ with both
isomorphisms induced by $(\psi_n)_*$. Since homology and $K$-theory are continuous with
respect to inductive limits, the result follows.
\end{proof}

\section{Deaconu--Renault groupoids}\label{sec:DR groupoids}

In this section we first show that the homology of an ample higher-rank Deaconu-Renault
groupoid $\G ( X , \sigma )$ is  given by $H_* ( \Z^k , H_0 ( \G ( X , \sigma ) \times_c
\Z^k ))$ using spectral sequence arguments of Matui. In Theorem~\ref{thm:main H*
computation} we describe a complex which allows us to compute $H_* ( \Z^k , H_0 ( \G ( X
, \sigma ) \times_c \Z^k )$ by adapting techniques from \cite{evans}. We then use this
description and Kasparov's $K$-theory spectral sequence to prove that $\G ( X , \sigma )$
belongs to $\M$ when the rank is either one or two (see Theorems~\ref{thm:DR k=1},
\ref{thm:DR k=2}). Furthermore we also give formulas for computing the $K$-theory of $C^*
( \G ( X , \sigma ))$ in these cases.

Recall from Section~\ref{s:definitions} that if $\sigma$ is an action of $\N^k$ on a
locally compact Hausdorff space $X$ by local homeomorphisms, then we write $\widetilde{X}
:= X \times \Z^k$, and there is an action $\tilde\sigma$ of $\N^k$ by local
homeomorphisms on $\widetilde{X}$ given by $\tilde\sigma^q(x, p) = (\sigma^q(x), p+q)$.
Equation~\eqref{eq:skew groupoid iso} then defines an isomorphism of the skew-product
groupoid $\G(X, \sigma) \times_c \Z^k$ corresponding to the cocycle $c(x, m, y) = m$ onto
the Deaconu--Renault groupoid $\G(\widetilde{X}, \tilde{\sigma})$.

Our first result shows that $\G ( X ,\sigma ) \times_c \Z^k$ is equivalent to $c^{-1} (0)$;
this in turn allows us to compute its homology.

\begin{lemma}\label{lem:skew gpd homology}
Let $X$ be a locally compact Hausdorff totally disconnected space, and let $\sigma$ be an
action of $\N^k$ on $X$ by local homeomorphisms. The set $X \times \{0\} \subseteq X
\times \Z^k$ is a clopen $\G(\widetilde{X}, \tilde{\sigma})$-full subspace of
$\G(\widetilde{X},\tilde\sigma)^{(0)}$. The map $(x, 0, y) \mapsto ((x, 0), 0, (y, 0))$
is an isomorphism of $c^{-1}(0) \subseteq \G(X, \sigma)$ onto $\G(\widetilde{X},
\tilde{\sigma})|_{X \times \{0\}}$, and $\G(X, \sigma) \times_c \Z^k$ is an AF groupoid.

There is an isomorphism of $H_*(c^{-1}(0))$ onto $H_*(\G(\widetilde{X}, \tilde{\sigma}))$
that carries $[1_U]$ to $[1_{U \times \{0\}}]$ for every compact open $U \subseteq X$.
\end{lemma}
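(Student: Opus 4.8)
The plan is to prove the three assertions of Lemma~\ref{lem:skew gpd homology} in sequence, leaning heavily on the isomorphism $\G(X,\sigma)\times_c\Z^k\cong\G(\widetilde X,\tilde\sigma)$ from Equation~\eqref{eq:skew groupoid iso} and on Corollary~\ref{cor:lim homology} together with Lemma~\ref{lem:morita-invariance}. First I would verify that $X\times\{0\}$ is a clopen subset of $\widetilde X=X\times\Z^k$: this is immediate since $\Z^k$ is discrete, so $X\times\{0\}$ is open, and it is also closed because its complement $X\times(\Z^k\setminus\{0\})$ is open. To see it is $\G(\widetilde X,\tilde\sigma)$-full, I would take an arbitrary unit $(x,p)\in\widetilde X$ and produce a groupoid element with source $(x,p)$ and range in $X\times\{0\}$; the natural candidate comes from applying $\tilde\sigma^{\,q}$ with $q=-p$ when $p\le 0$, and more generally using that $c(x,n,y)=n$ ranges over all of $\Z^k$ on each orbit, so that every orbit meets $X\times\{0\}$. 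Concretely, $((\sigma^{p_+}(x),0),\,p,\,(x,p))$ (suitably chosen so the first coordinate lands at level $0$) is such an element.

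Next I would check the stated isomorphism $c^{-1}(0)\cong\G(\widetilde X,\tilde\sigma)|_{X\times\{0\}}$. Here $c^{-1}(0)=\{(x,0,y)\in\G(X,\sigma):\sigma^p(x)=\sigma^p(y)\text{ some }p\}$, and the map $(x,0,y)\mapsto((x,0),0,(y,0))$ is the obvious candidate. I would confirm it is a continuous bijective groupoid homomorphism with continuous inverse by unwinding the structure maps of $\G(\widetilde X,\tilde\sigma)$: the source and range of $((x,0),0,(y,0))$ both sit at level $0$, matching the restriction to $X\times\{0\}$, and composability and products are preserved because the $\Z^k$-coordinate is additive. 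That $\G(X,\sigma)\times_c\Z^k$ is AF follows because, under \eqref{eq:skew groupoid iso}, it is isomorphic to $\G(\widetilde X,\tilde\sigma)$, and the latter is AF: exhausting $\widetilde X$ by the increasing sequence of compact-open ``slabs'' and using that $\tilde\sigma$ increments the $\Z^k$-coordinate, one sees that the equivalence relations $R_m=\{((x,p),(y,q)):\tilde\sigma^{m}(x,p)=\tilde\sigma^{m}(y,q)\}$ form an increasing union of open elementary subgroupoids whose union is $\G(\widetilde X,\tilde\sigma)$, matching Definition~\ref{def:AF}.

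For the final homology isomorphism, the clopen full subspace $X\times\{0\}$ gives, via Lemma~\ref{lem:morita-invariance}, an isomorphism $H_*(\G(\widetilde X,\tilde\sigma)|_{X\times\{0\}})\cong H_*(\G(\widetilde X,\tilde\sigma))$ induced by the inclusion, and this inclusion-induced map carries $[1_{U\times\{0\}}]$ in the restricted groupoid to $[1_{U\times\{0\}}]$ in $\G(\widetilde X,\tilde\sigma)$ by the functoriality described in Remark~\ref{rmk:functoriality}. Composing with the isomorphism $H_*(c^{-1}(0))\cong H_*(\G(\widetilde X,\tilde\sigma)|_{X\times\{0\}})$ induced by the groupoid isomorphism of the second assertion, which sends $[1_U]$ to $[1_{U\times\{0\}}]$, produces the claimed isomorphism $H_*(c^{-1}(0))\cong H_*(\G(\widetilde X,\tilde\sigma))$ carrying $[1_U]$ to $[1_{U\times\{0\}}]$.

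I expect the main obstacle to be the verification of fullness and of the AF structure. Fullness requires care because one must exhibit, for a unit at an arbitrary level $p\in\Z^k$, an explicit groupoid element connecting it into level $0$, and the surjective-local-homeomorphism hypothesis on $\sigma$ is exactly what guarantees this; tracking signs and the $p_+/p_-$ decomposition of $p$ is the fiddly part. The AF claim is conceptually routine given Definition~\ref{def:AF}, but writing down the exhausting sequence of open elementary subgroupoids of $\G(\widetilde X,\tilde\sigma)$ and confirming each is of the form $R(\psi)$ for a local homeomorphism on a $0$-dimensional space — so that Corollary~\ref{cor:lim homology} applies — takes some bookkeeping. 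Everything else is a direct application of the equivalence results already established.
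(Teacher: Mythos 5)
Your overall architecture matches the paper's: clopenness is immediate, fullness uses surjectivity of the $\sigma^n$, the isomorphism onto $\G(\widetilde{X},\tilde\sigma)|_{X\times\{0\}}$ is a direct check, and the homology statement follows by composing that isomorphism with the inclusion-induced isomorphism from Lemma~\ref{lem:morita-invariance}, tracking $[1_U]\mapsto[1_{U\times\{0\}}]$ via Remark~\ref{rmk:functoriality}. Those parts are fine. But your argument for the AF claim contains a genuine error. The relations you propose, $R_m=\{((x,p),(y,q)):\tilde\sigma^{m}(x,p)=\tilde\sigma^{m}(y,q)\}$, force $p+m=q+m$ and hence $p=q$, so $\bigcup_m R_m$ is only the kernel of the canonical cocycle of $\G(\widetilde{X},\tilde\sigma)$ (the elements living over a single level of $\Z^k$). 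It omits every element $((x,p),q-p,(y,q))$ with $p\neq q$ --- including exactly the elements you constructed to prove fullness --- so this union is not $\G(\widetilde{X},\tilde\sigma)$ and the exhaustion does not establish that the skew product is AF. The paper avoids this by working on $X\times\{0\}$: there the level coordinate is constant, so $c^{-1}(0)=\bigcup_{n\in\N^k}R(\sigma^n)$ genuinely is the whole groupoid, hence AF; AF-ness of $\G(X,\sigma)\times_c\Z^k$ is then transferred across the equivalence with the full clopen subset coming from Theorem~\ref{thm:equivalences}. If you insist on a direct exhaustion of $\G(\widetilde{X},\tilde\sigma)$, you need maps that compensate for the level, e.g.\ $\psi_N(x,p):=\sigma^{N-p}(x)$ defined on the open set $X\times\{p\le N\}$; then $R(\psi_N)$ does capture the cross-level elements and $\bigcup_N R(\psi_N)=\G(\widetilde{X},\tilde\sigma)$.

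A smaller slip: your explicit fullness element $((\sigma^{p_+}(x),0),p,(x,p))$ does not lie in $\G(\widetilde{X},\tilde\sigma)$ --- it would require $\sigma^{p_+}(\sigma^{p_+}(x))=\sigma^{p_-}(x)$. The correct element is $((w,0),p,(x,p))$ for any $w$ with $\sigma^{p_+}(w)=\sigma^{p_-}(x)$; the existence of such a $w$ is precisely where surjectivity of the local homeomorphisms enters (this is the computation the paper does, with $n=n_+-n_-$). Since you flagged this as the fiddly step, I read it as a slip rather than a conceptual gap, but as written the formula is wrong.
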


\begin{proof}
It is clear that $X \times \{ 0 \}$ is a clopen subset of $X \times \Z^k$, the unit space
of $\G ( \widetilde{X}, \tilde\sigma )$. We claim that it is full.  Fix $(x,n) \in X
\times \Z^k$ and write $n = n_+ - n_-$ where $n_+ , n_- \in \N^k$. Then there exists $y
\in X$ such that $\sigma^{n_-} (y) = \sigma^{n_+} (x)$. Set $\gamma = ( (x, 0) , n_+ -
n_- , (y,  n))$. By construction we have  $\tilde{\sigma}^{n_+} (x, 0) =
\tilde{\sigma}^{n_-} (y, n)$, and so $\gamma \in \G ( \widetilde{X}, \tilde\sigma )$.
Furthermore $r(\gamma)=(x, 0)  \in X \times \{ 0 \}$ and $s(\gamma) = (y, n)$, proving
the claim.

The map $(x, 0, y) \mapsto ((x, 0), 0, (y, 0))$ is clearly an injective homomorphism. To
show that it is surjective, let $\gamma  = ((x,m),m-n,(y,n)) \in
\G(\widetilde{X},\tilde\sigma)|_{X \times \{0\}}$. Then $m=n=0$, so $\gamma$ is in the range of $c^{-1} (0)$.

Since $\G(X, \sigma) \times_c \Z^k \cong \G ( \widetilde{X} , \tilde\sigma )$ via the
isomorphism given in~\eqref{eq:skew groupoid iso}, and since $X \times \{ 0 \}$ is
$\G(\widetilde{X}, \tilde{\sigma})$-full, it follows from Theorem~\ref{thm:equivalences}
that $\G(X, \sigma) \times_c \Z^k$ is equivalent to $\G(\widetilde{X}, \tilde{\sigma})|_{X
\times \{0\}}$. By the preceding paragraph $\G(\widetilde{X}, \tilde{\sigma})|_{X \times
\{0\}} \cong c^{-1} (0)$ . Since $c^{-1} (0)$ can be written as an increasing union of
the elementary groupoids $R ( \sigma^n )$, it is AF (see Corollary~\ref{cor:lim
homology}), and so $\G(X, \sigma) \times_c \Z^k$ is an AF groupoid also.

The final statement follows from Lemma~\ref{lem:morita-invariance}.
\end{proof}

To compute the homology of $c^{-1}(0)$, we decompose it as the increasing union of the
subgroupoids $R(\sigma^n)$ as $n$ ranges over $\N^k$.

\begin{lemma}\label{lem:H0 of skewprod as limit}
Let $X$ be a totally disconnected locally compact Hausdorff space, and let $\sigma$ be an
action of $\N^k$ on $X$ by surjective local homeomorphisms. There is an isomorphism
$\varinjlim(C_c(X, \Z), \sigma^n_*) \to H_0(c^{-1}(0))$ that takes $\sigma^{0,
\infty}_*(1_U)$ to $[1_U]$ for every compact open $U \subseteq X$. We have
\[
H_q(\G(X, \sigma) \times_c\Z^k) \cong
    \begin{cases}
        \varinjlim_{n \in \N^k}(C_c(X, \Z), \sigma^n_*) &\text{ if $q = 0$}\\
        0 &\text{ otherwise.}
    \end{cases}
\]
The isomorphism $H_0(\G(X, \sigma) \times_c\Z^k) \cong \varinjlim_{n \in \N^k}(C_c(X,
\Z), \sigma^n_*)$ intertwines the action of $\Z^k$ on $H_0(\G(X, \sigma) \times_c\Z^k)$
given by $p \cdot ((x, m, y), n) = ((x, m, y), n+p)$ with the action of $\Z^k$ on
$\varinjlim_{n \in \N^k}(C_c(X, \Z), \sigma^n_*)$ induced by the $\sigma^n_*$.
\end{lemma}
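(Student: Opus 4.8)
The plan is to realise $c^{-1}(0)$ as the increasing union of elementary subgroupoids, compute homology level by level with Theorem~\ref{thm:R(psi) homology}, pass to the limit using Proposition~\ref{prop:union-homology}, and then extract the $\Z^k$-equivariance from a boundary computation in the skew-product picture.

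First I would use the decomposition noted in the proof of Lemma~\ref{lem:skew gpd homology}: under $(x,0,y)\mapsto(x,y)$ the groupoid $c^{-1}(0)$ is the increasing union $\bigcup_{n\in\N^k}R(\sigma^n)$ of open elementary subgroupoids, directed by the order on $\N^k$ (if $n\le m$ and $\sigma^n(x)=\sigma^n(y)$ then $\sigma^m(x)=\sigma^m(y)$). Under the standing second-countability assumption $X$ is $\sigma$-compact, so Theorem~\ref{thm:R(psi) homology} applied to $\psi=\sigma^n\colon X\to X$ gives $H_q(R(\sigma^n))=0$ for $q\ge 1$ and an isomorphism $H_0(R(\sigma^n))\cong C_c(X,\Z)$ sending $[1_U]$ to $\sigma^n_*(1_U)$. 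For $n\le m$, Remark~\ref{rmk:functoriality} shows the inclusion carries $[1_U]_{R(\sigma^n)}$ to $[1_U]_{R(\sigma^m)}$; comparing under these isomorphisms and using $\sigma^m_*=\sigma^{m-n}_*\circ\sigma^n_*$ identifies the connecting map $C_c(X,\Z)\to C_c(X,\Z)$ with $\sigma^{m-n}_*$. Proposition~\ref{prop:union-homology} then yields $H_q(c^{-1}(0))=0$ for $q\ge 1$ together with the isomorphism $\varinjlim(C_c(X,\Z),\sigma^n_*)\to H_0(c^{-1}(0))$ taking $\sigma^{0,\infty}_*(1_U)$ to $[1_U]$, which is the first assertion. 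Composing with the isomorphism $H_*(c^{-1}(0))\cong H_*(\G(\widetilde{X},\tilde\sigma))\cong H_*(\G(X,\sigma)\times_c\Z^k)$ supplied by Lemma~\ref{lem:skew gpd homology} and~\eqref{eq:skew groupoid iso} gives the displayed computation of $H_q(\G(X,\sigma)\times_c\Z^k)$.

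For the equivariance I would transport everything to $\G(\widetilde{X},\tilde\sigma)$, where the action $\alpha_p$ becomes translation $\beta_p\colon(x,a)\mapsto(x,a+p)$ in the $\Z^k$-coordinate of $\widetilde{X}=X\times\Z^k$. Tracking the isomorphisms, $\sigma^{0,\infty}_*(1_U)$ corresponds to $[1_{U\times\{0\}}]$, and $(\beta_p)_*[1_{U\times\{0\}}]=[1_{U\times\{p\}}]$. The crux is to rewrite $[1_{U\times\{p\}}]$ in terms of level-$0$ data. For $p\in\N^k$ and compact open $U$ with $\sigma^p|_U$ injective, the set $B=\{((x,0),p,(\sigma^p(x),p)):x\in U\}$ is a compact open bisection of $\G(\widetilde{X},\tilde\sigma)$ with $r(B)=U\times\{0\}$ and $s(B)=\sigma^p(U)\times\{p\}$, so $\partial_1(1_B)=1_{\sigma^p(U)\times\{p\}}-1_{U\times\{0\}}$ and hence $[1_{\sigma^p(U)\times\{p\}}]=[1_{U\times\{0\}}]$ in $H_0$. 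Writing an arbitrary compact open $U$ as a finite disjoint union $\bigsqcup_i\sigma^p(U_i')$ with $\sigma^p|_{U_i'}$ injective and summing, one finds that $[1_{U\times\{p\}}]$ corresponds under the inductive-limit isomorphism to $\sigma^{p,\infty}_*(1_U)$ (the image of $1_U$ from the level-$p$ copy, using $\sigma^p_*(\sum_i1_{U_i'})=1_U$). Thus $(\alpha_p)_*$ sends $\sigma^{0,\infty}_*(1_U)$ to $\sigma^{p,\infty}_*(1_U)$; extending over $\Z^k$ via the fact that $\alpha$ is a group action, this is exactly the index-shift automorphism of $\varinjlim(C_c(X,\Z),\sigma^n_*)$ determined by the connecting maps $\sigma^n_*$, which is the final assertion.

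The union decomposition, the level-wise homology, and the identification of the connecting maps are routine given Theorem~\ref{thm:R(psi) homology} and Proposition~\ref{prop:union-homology}. I expect the main obstacle to be the last step: correctly bookkeeping the three successive isomorphisms and carrying out the boundary computation so that $\alpha_p$ is matched with the correct automorphism of the limit. In particular one must pin down the direction of the shift --- equivalently, verify that $\sigma^{p,\infty}_*(1_U)=\sigma^{0,\infty}_*(g)$ whenever $\sigma^p_*(g)=1_U$ --- so that the intertwining holds on the nose rather than up to inversion.
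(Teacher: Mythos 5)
Your proposal is correct and follows essentially the same route as the paper: both decompose $c^{-1}(0)$ as the increasing union of the elementary groupoids $R(\sigma^n)$, identify $H_0(R(\sigma^n)) \cong C_c(X,\Z)$ so that the connecting maps become the $\sigma^{m-n}_*$, pass to the limit by continuity of homology, and transfer the result through Lemma~\ref{lem:skew gpd homology}. The paper disposes of the final equivariance claim with the phrase ``direct computation of the maps involved,'' so your explicit bisection computation --- and your flagging of the shift-direction subtlety, which in any case does not affect the groups $H_*(\Z^k,-)$ used downstream since replacing a $\Z^k$-action by its inverse does not change group homology --- supplies detail the paper omits rather than diverging from it.
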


\begin{proof}
If $U, V \subseteq X$ are compact open sets on which $\sigma^n$ is injective, and if we
have $\sigma^n(U) = \sigma^n(V)$, then we have $[1_U] = [1_V]$ in $H_0(R(\sigma^n))$.
Since every $W \subseteq X$ can be expressed as a finite disjoint union $W = \bigsqcup_j
\sigma^n(U_j)$ where each $U_j$ is compact open and each $\sigma^n|_{U_j}$ is injective,
it follows that there is a unique homomorphism $\varphi_n : C_c(X, \Z) \to
H_0(R(\sigma^n))$ such that $\varphi_n(\sigma^n_*(1_U)) = [1_U]$ for all compact open
$U$. This $\varphi_n$ is the map induced by the weak equivalence of groupoids $\varphi_n
: R(\sigma^n) \to X$  (see Remark~\ref{rmk:weakeq}), so Proposition~\ref{prp:nonample
equiv} and Lemma~\ref{lem:morita-invariance} imply that it is an isomorphism. For $m,n
\in \N^k$, let $\iota_{m,n}$ be the inclusion map $R(\sigma^m) \hookrightarrow
R(\sigma^{m+n})$, and let $(\iota_{m,n})_* : H_0(R(\sigma^m)) \to H_0(R(\sigma^{m+n}))$
be the induced map in homology. Then we have a commuting diagram
\[
\begin{tikzpicture}[xscale=0.85,yscale=0.7]
    \node (tl) at (0, 3) {$C_c(X, \Z)$};
    \node (tr) at (4, 3) {$C_c(X, \Z)$};
    \node (bl) at (0, 0) {$H_0(R(\sigma^m))$};
    \node (br) at (4, 0) {$H_0(R(\sigma^{m+n}))$};
    \draw[->] (tl)--(tr) node[above, pos=0.5]{$\sigma^n_*$};
    \draw[->] (tl)--(bl) node[left, pos=0.5]{$\varphi_m$};
    \draw[->] (bl)--(br) node[below, pos=0.5]{$(\iota_{m,n})_*$};
    \draw[->] (tr)--(br) node[right, pos=0.5]{$\varphi_{m+n}$};
\end{tikzpicture}
\]
Since $c^{-1}(0)$ is the increasing union of the closed subgroupoids $R(\sigma^n)$,
continuity of homology gives $H_0(c^{-1}(0)) \cong \varinjlim(H_0(R(\sigma^m)),
\iota_*)$. So the universal properties of the direct limits prove the first statement.

The isomorphism~\eqref{eq:skew groupoid iso} of $\G(X, \sigma) \times_c \Z^k$ with
$\G(\widetilde{X}, \widetilde{\sigma})$ and Lemma~\ref{lem:skew gpd homology} show that
$H_*(\G(X, \sigma) \times_c \Z^k) \cong H_*(c^{-1}(0))$. So the preceding paragraph
proves that $H_0(\G(X, \sigma) \times_c\Z^k) \cong \varinjlim_{n \in \N^k}(C_c(X, \Z),
\sigma^n_*)$. We saw in the preceding paragraph that $c^{-1}(0)$ is an AF groupoid, so
Corollary~\ref{cor:lim homology} proves that $H_q(\G(X, \sigma) \times_c\Z^k) = 0$ for $q
\ge 1$. The final statement follows from direct computation of the maps involved.
\end{proof}

Matui's spectral sequence \cite[Theorem 3.8(1)]{Matui-2012} relates $H_*(\G(X, \sigma))$
to the homology of $\Z^k$ with coefficients in $H_*(\G(X, \sigma) \times_c \Z^k)$.
Since $H_q ( \G(X, \sigma) \times_c \Z^k)) =0$ for $q \ge 1$ (it is AF by Lemma~\ref{lem:H0 of skewprod as limit}),
the spectral sequence collapses: It follows that
\[
H_q ( \G ( X , \sigma ) ) \cong H_q (  \Z^k , H_0 ( \G (X,\sigma) \times_c \Z^k ) ) \text{ for } 0 \le q \le k .
\]
The proof of the following lemma is based on the technique developed in
\cite[Section~3]{evans}.  Recall that for $1 \le p \le k$, the module $\bigwedge^p R^k$
is the free $R$-module generated by the elements $\varepsilon_{i_1} \wedge \cdots \wedge
\varepsilon_{i_p}$ indexed by integer tuples $1 \le i_1 < i_2 < \cdots < i_p \le k$. We
define $\bigwedge^0 R^k := R$.

\begin{lemma}\label{lem:homology reduction}
Let $A$ be an abelian group, and suppose that $\sigma_1, \dots, \sigma_k$ are
pairwise commuting endomorphisms of $A$ (i.e. an action of $\N^k$). For
$1 \le p \le k$, define
\[\textstyle
\partial_p : \bigwedge^p \Z^k \otimes A \to \bigwedge^{p-1} \Z^k \otimes A
\]
on spanning elements $(\varepsilon_{i_1} \wedge \cdots \wedge \varepsilon_{i_p}) \otimes
a$ in which the $i_j$ are in strictly increasing order by
\[
\partial_p (\varepsilon_{i_1} \wedge \cdots \wedge \varepsilon_{i_p} \otimes a)
    = \begin{cases} \sum_j (-1)^{j+1} \varepsilon_{i_1} \wedge \cdots \wedge \widehat{\varepsilon}_{i_j} \wedge \cdots \wedge \varepsilon_{i_p} \otimes (\id - \sigma_{i_j}) a & \text{ for } p > 1 , \\
    1 \otimes (\id - \sigma_{i_1} ) a  & \text{ for } p= 1 .
    \end{cases}
\]
Then $\partial_{p-1} \circ \partial_p = 0$ for each $p$, so that $(\bigwedge^* \Z^k
\otimes A, \partial_*)$ is a complex. For each $0 \le i \le k$, the homomorphism $\id \otimes
\sigma_i : \bigwedge^* \Z^k \otimes A \to \bigwedge^* \Z^k \otimes A$ commutes with
$\partial_*$, and the induced map $(\id  \otimes \sigma_i)_*$ in homology is the identity
map.
\end{lemma}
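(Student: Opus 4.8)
The plan is to recognise $\big(\bigwedge^* \Z^k \otimes A, \partial_*\big)$ as the Koszul complex attached to the pairwise commuting operators $T_i := \id - \sigma_i$ on $A$, and to treat the three assertions in turn. For $\partial_{p-1}\circ\partial_p = 0$ I would run the standard double-sum computation: applying $\partial$ twice to $\varepsilon_{i_1}\wedge\cdots\wedge\varepsilon_{i_p}\otimes a$ produces, for each unordered pair $\{i_j,i_l\}$ of deleted indices, two terms, one from deleting $i_j$ then $i_l$ and one from deleting $i_l$ then $i_j$. A short sign count shows these carry opposite signs while their operator coefficients are $T_{i_l}T_{i_j}a$ and $T_{i_j}T_{i_l}a$; since the $\sigma_i$ commute, so do the $T_i$, and the two terms cancel. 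The fact that $\id\otimes\sigma_i$ is a chain map is then immediate: $\sigma_i$ commutes with each $T_{i_j}=\id-\sigma_{i_j}$, so applying $\sigma_i$ to the coefficient before or after forming $\partial$ gives the same answer (and for $i=0$ one reads $\sigma_0=\id$, making the claim trivial).

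The substance lies in showing $(\id\otimes\sigma_i)_*=\id$ on homology for $1\le i\le k$. I would instead prove the stronger statement that $\id\otimes(\id-\sigma_i)=\id-(\id\otimes\sigma_i)$ is null-homotopic; passing to homology then yields $\id-(\id\otimes\sigma_i)_*=0$. The homotopy is the Koszul contraction $s_i:\bigwedge^p\Z^k\otimes A\to\bigwedge^{p+1}\Z^k\otimes A$ given by exterior multiplication, $s_i(\omega\otimes a)=\varepsilon_i\wedge\omega\otimes a$ (which vanishes whenever $\varepsilon_i$ already occurs in $\omega$). The target identity is
\[
\partial_{p+1}s_i + s_i\partial_p = \id\otimes(\id-\sigma_i),
\]
which I would verify on a spanning element $\varepsilon_{i_1}\wedge\cdots\wedge\varepsilon_{i_p}\otimes a$ by splitting into the cases $i\notin\{i_1,\dots,i_p\}$ and $i\in\{i_1,\dots,i_p\}$. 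In the first case, reordering $\varepsilon_i\wedge\omega$ into increasing form and expanding $\partial s_i$ isolates one distinguished term $\varepsilon_{i_1}\wedge\cdots\wedge\varepsilon_{i_p}\otimes T_i a$ together with off-diagonal terms that cancel precisely against those of $s_i\partial$; in the second case $\partial s_i$ vanishes and $s_i\partial$ contributes only the single surviving summand, again with coefficient $T_i a$.

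The hard part will be exactly this sign bookkeeping: tracking the permutation sign incurred when inserting $\varepsilon_i$ into an increasing multi-index at its correct position, and confirming that the two off-diagonal families acquire opposite signs and so cancel. This is routine but delicate. An alternative that sidesteps the explicit homotopy is to observe that $A$ is a module over $\Z[t_1,\dots,t_k]$ (with $t_i$ acting as $\sigma_i$), that the complex is exactly the Koszul complex on the sequence $1-t_1,\dots,1-t_k$, and then to invoke the classical fact that each element of a sequence annihilates the homology of its own Koszul complex; applied to $1-t_i$ this gives $(\id\otimes\sigma_i)_*=\id$ directly.
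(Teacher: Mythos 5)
Your proposal is correct and follows essentially the same route as the paper: the identity $\partial_{p+1}(\varepsilon_i\wedge x)+\varepsilon_i\wedge\partial_p(x)=(\id\otimes(\id-\sigma_i))(x)$ that you package as a chain homotopy is exactly the ``claim'' the paper proves by the same case split on whether $i$ occurs among the wedge indices, and the paper then applies it to cycles rather than phrasing it as a null-homotopy --- an equivalent bookkeeping choice. The Koszul-complex alternative you mention at the end is also valid but is not the argument the paper uses.
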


\begin{proof}
Direct computation on a spanning element $( \varepsilon_{i_1} \wedge \cdots \wedge
\varepsilon_{i_{p+1}} ) \otimes a$ of $\bigwedge^* \Z^k \otimes A$ using that the
$\sigma_i$ commute shows that $\partial_{p+1} \circ \partial_p = 0$. Clearly  the
$\id  \otimes \sigma_i$ commute with $\partial_*$.

For the final statement, we first claim that for $x \in \bigwedge^p\Z^k \otimes A$ and $1
\le l \le k$, we have
\[
\partial_{p+1}(\varepsilon_l \wedge x) = -\varepsilon_l \wedge \partial_p(x) + (\id \otimes (\id - \sigma_l))(x).
\]
To prove this, it suffices to consider $x = ( \varepsilon_{i_1} \wedge \cdots \wedge
\varepsilon_{i_p} )  \otimes a$ where $1 \le i_1 < \cdots < i_p \le k$. So fix such an $x$
and fix $l \le k$. We consider two cases.

Case~1: $l \not= i_h$ for all $h$. Then there exists $0 \le j \le p$ such that $i_h < l$
for $h \le j$ and $i_h > l$ for $h
> j$. Then, using at both the first and the third steps that $\varepsilon_l \wedge
\varepsilon_{i_1} \wedge \cdots \wedge \varepsilon_{i_n} = (-1)^n \varepsilon_{i_1}
\wedge \cdots \wedge \varepsilon_{i_n} \wedge \varepsilon_l$ for any $i_1 < i_2 < \cdots
< i_n$, we calculate:
\begin{align*}
\partial_{p+1}&(\varepsilon_l \wedge x)\\
    &= (-1)^j \partial_{p+1}(\varepsilon_{i_1} \wedge \cdots \wedge \varepsilon_{i_j} \wedge \varepsilon_l \wedge \varepsilon_{i_{j+1}} \wedge \cdots \wedge \varepsilon_p \otimes a)\\
    &= (-1)^j\Big[\sum_{h=1}^j (-1)^{h+1} \varepsilon_{i_1} \wedge \cdots \wedge \widehat{\varepsilon}_{i_h} \wedge \cdots \wedge \varepsilon_{i_j} \wedge \varepsilon_l \wedge \varepsilon_{i_{j+1}} \wedge \cdots \wedge \varepsilon_p \otimes (\id - \sigma_{i_h})a\\
        &\qquad + (-1)^{j+2}\varepsilon_{i_1} \wedge \cdots \wedge \varepsilon_{i_j} \wedge \varepsilon_{i_{j+1}} \wedge \cdots \wedge \varepsilon_p \otimes (\id - \sigma_l)a\\
        &\qquad + \sum_{h=j+1}^p (-1)^{h+2} \varepsilon_{i_1} \wedge \cdots \wedge \varepsilon_{i_j} \wedge \varepsilon_l \wedge \varepsilon_{i_{j+1}} \wedge \cdots \wedge \widehat{\varepsilon}_{i_h} \wedge \cdots \wedge \varepsilon_p \otimes (\id - \sigma_{i_h})a\Big]\\
    &= \sum_{h=1}^p(-1)^h \varepsilon_l \wedge \varepsilon_{i_1} \wedge \cdots \wedge \widehat{\varepsilon}_{i_h} \wedge \cdots \wedge \varepsilon_p \otimes (\id - \sigma_{i_h})a\\
        &\qquad + \varepsilon_{i_1} \wedge \cdots \wedge \varepsilon_p \otimes (\id - \sigma_l)a\\
    &= \varepsilon_l \wedge \left( \sum_{h=1}^p (-1)^h \varepsilon_{i_1} \wedge \cdots \wedge \widehat{\varepsilon}_{i_h} \wedge \cdots \wedge \varepsilon_p  \otimes (\id - \sigma_{i_h})a\right)\\
        &\qquad + (\id \otimes (\id - \sigma_l))(\varepsilon_{i_1} \wedge \cdots \wedge \varepsilon_{i_p} \otimes a)\\
    &= -\varepsilon_l \wedge \partial_p(x) + (\id \otimes (\id - \sigma_l))(x).
\end{align*}

Case~2: $l = i_h$ for some $h$. Then $\partial_{p+1}(\varepsilon_l \wedge x) =
\partial_{p+1}(0) = 0$. So we must show that $\varepsilon_l \wedge \partial_p(x) = (\id
\otimes (\id - \sigma_l))(x)$. We have
\[
\varepsilon_l \wedge \partial_p(x)
    = \sum^p_{j=1} \varepsilon_l \wedge (-1)^{j+1} \varepsilon_{i_1} \wedge \cdots \wedge \widehat{\varepsilon}_{i_j} \wedge \cdots \wedge \varepsilon_p \otimes (\id - \sigma_{i_j})(a).
\]
The terms corresponding to $i_j \not= l$ are zero, so this collapses to
\begin{align*}
\varepsilon_l \wedge \partial_p(x)
    &= (-1)^{h+1} \varepsilon_l \wedge \varepsilon_{i_1} \wedge \cdots \wedge \widehat{\varepsilon}_{i_h} \wedge \cdots \wedge \varepsilon_{i_p} \otimes (\id - \sigma_{i_h})(a)\\
    &=  \varepsilon_{i_1} \wedge \cdots \wedge \varepsilon_{i_p} \otimes (\id - \sigma_l)(a)\\
    &= (\id \otimes (\id - \sigma_l))(x).
\end{align*}
This completes the proof of the claim.

We now prove the final statement. Fix $x \in \bigwedge^p\Z^k \otimes A$ such that
$\partial_p(x) = 0$, and fix $l \le p$. We just have to show that $(\id \otimes (\id -
\sigma_l))(x) \in \operatorname{image}(\partial_{p+1})$. Since $\partial_p(x) = 0$ and
using the claim, we see that
\[
(\id \otimes (\id - \sigma_l))(x)
    = (\id \otimes (\id - \sigma_l))(x) - \varepsilon_l \wedge \partial_p(x)
    = \partial_{p+1}(\varepsilon_l \wedge x),
\]
and the result follows.
\end{proof}

\begin{lemma}\label{lem:homology with limits}
Let $A$ be an abelian group, and suppose that $\sigma_1, \dots, \sigma_k$ are pairwise
commuting endomorphisms of $A$ (i.e. an action of $\N^k$). Let $\partial_p : \bigwedge^*
\Z^k \otimes A \to \bigwedge^* \Z^k \otimes A$ be as in Lemma~\ref{lem:homology
reduction}. Let $\widetilde{A} := \varinjlim_{\N^k}(A, \sigma^n)$. For $i \le k$ let
$\tilde\sigma_i$ be the automorphism of $\widetilde{A}$ induced by $\sigma_i$, and let
$\tilde\partial_p : \bigwedge^* \Z^k \otimes \widetilde{A} \to \bigwedge^* \Z^k \otimes
\widetilde{A}$ be the boundary map obtained from Lemma~\ref{lem:homology reduction}
applied to $\widetilde{A}$ and the $\tilde\sigma_i$. Then the canonical homomorphism
$\sigma^{0,\infty} : A \to \widetilde{A}$ corresponding to the $0$\textsuperscript{th}
copy of $A$ induces an isomorphism $H_*(\bigwedge^*\Z^k \otimes A) \cong
H_*(\bigwedge^*\Z^k \otimes \widetilde{A})$. Moreover, $\tilde\sigma$ extends to an
action of $\Z^k$ on $\widetilde{A}$, and $H_*(\bigwedge^*\Z^k \otimes \widetilde{A})
\cong H_*(\Z^k, \widetilde{A})$.
\end{lemma}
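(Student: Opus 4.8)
The plan is to handle the three assertions separately: that the automorphisms $\tilde\sigma_i$ assemble into a $\Z^k$-action on $\widetilde A$, that $\sigma^{0,\infty}$ induces an isomorphism of the homologies of the two complexes, and that the target homology is the group homology $H_*(\Z^k, \widetilde A)$.

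First I would check that $\tilde\sigma$ extends to a $\Z^k$-action. Writing $[a]_m$ for the class in $\widetilde A = \varinjlim_{\N^k}(A, \sigma^n)$ of $a$ in the copy of $A$ indexed by $m \in \N^k$, the endomorphism $\sigma_i$ is realised by a connecting map of the directed system, and so $\tilde\sigma_i$ becomes invertible in the colimit: the map $\tau_i \colon [a]_m \mapsto [a]_{m + \varepsilon_i}$ is well defined and is a two-sided inverse for $\tilde\sigma_i$, since the connecting map from index $m$ to $m + \varepsilon_i$ is precisely $\sigma_i$. The $\tilde\sigma_i$ commute because the $\sigma_i$ do, so together they give an action of $\Z^k$.

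Next I would prove the isomorphism $H_*(\bigwedge^*\Z^k \otimes A) \cong H_*(\bigwedge^*\Z^k \otimes \widetilde A)$. The key observation is that tensoring with the free abelian group $\bigwedge^*\Z^k$ commutes with direct limits, so the directed system of chain complexes $(\bigwedge^*\Z^k \otimes A, \partial_*)$, with connecting maps $\id \otimes \sigma^n$, has colimit $(\bigwedge^*\Z^k \otimes \widetilde A, \tilde\partial_*)$; here one must check that the operators $\id - \sigma_i$ appearing in $\partial_*$ pass in the colimit to $\id - \tilde\sigma_i$, so that the colimit differential really is $\tilde\partial_*$. Since homology commutes with filtered colimits, $H_*(\bigwedge^*\Z^k \otimes \widetilde A) \cong \varinjlim H_*(\bigwedge^*\Z^k \otimes A)$, with connecting maps $(\id \otimes \sigma^n)_*$. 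By Lemma~\ref{lem:homology reduction} each $(\id \otimes \sigma_i)_*$, hence each $(\id \otimes \sigma^n)_*$, is the identity; thus the directed system of homology groups has identity connecting maps, its colimit is canonically $H_*(\bigwedge^*\Z^k \otimes A)$, and the isomorphism onto $H_*(\bigwedge^*\Z^k \otimes \widetilde A)$ is the one induced by the canonical map $\sigma^{0,\infty}$ from the zeroth term. I expect this to be the main obstacle: the careful identification of the colimit complex and the interchange of homology with the filtered colimit, rather than the formal bookkeeping, is where the content lies.

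Finally I would identify $H_*(\bigwedge^*\Z^k \otimes \widetilde A, \tilde\partial_*)$ with $H_*(\Z^k, \widetilde A)$. Since $\widetilde A$ is now a genuine $\Z[\Z^k]$-module via $\tilde\sigma$, I would use the Koszul resolution of the trivial module $\Z$ over $\Z[\Z^k]$ associated to the sequence $t_1 - 1, \dots, t_k - 1$; tensoring this resolution over $\Z[\Z^k]$ with $\widetilde A$ gives exactly $\bigwedge^*\Z^k \otimes \widetilde A$ with differential built from the operators $\tilde\sigma_i - \id$. This agrees with $\tilde\partial_*$ up to the overall sign $\id - \tilde\sigma_i = -(\tilde\sigma_i - \id)$, and multiplying each differential by $-1$ (absorbed by the chain isomorphism that is $(-1)^p$ in degree $p$) leaves homology unchanged, so $H_*(\bigwedge^*\Z^k \otimes \widetilde A) \cong H_*(\Z^k, \widetilde A)$.
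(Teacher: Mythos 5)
Your proposal is correct and follows essentially the same route as the paper: continuity of homology under filtered colimits combined with Lemma~\ref{lem:homology reduction} (the induced maps $(\id\otimes\sigma^n)_*$ are the identity) for the first isomorphism, and a Koszul-type free resolution of $\Z$ over $\Z[\Z^k]$ for the identification with $H_*(\Z^k,\widetilde{A})$. The only cosmetic difference is your sign adjustment in the Koszul differential, which the paper avoids by building the resolution directly from the operators $1 - s_i$.
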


\begin{proof}
Since the homology functor is continuous (see, for example,
\cite[Theorem~4.1.7]{spanier}) we have $H_*(\bigwedge^*\Z^k \otimes \widetilde{A}) \cong
\varinjlim\big(H_*(\bigwedge^*\Z^k \otimes A), (\id \otimes \sigma^n)_*\big)$.
Lemma~\ref{lem:homology reduction} shows that this is equal to
$\varinjlim\big(H_*(\bigwedge^*\Z^k \otimes A), \id\big) = H_*(\bigwedge^*\Z^k \otimes
A)$.

For the second statement, we follow the argument of \cite[Lemma~3.12]{evans} (see also
\cite[Theorem~5.5]{KPS1}). Let $G := \Z^k = \langle s_1, \dots, s_k\rangle$ and let $R :=
\Z G$. For $p \ge 2$, we define $\partial_p : \bigwedge^p R^k \to \bigwedge^{p-1} R^k$ by
\[
\partial_p (\varepsilon_{i_1} \wedge \cdots \wedge \varepsilon_{i_p})
    = \sum_j (-1)^{j+1} (1 - s_{i_j}) \varepsilon_{i_1} \wedge \cdots \wedge
    \widehat{\varepsilon}_{i_j} \wedge \cdots \wedge \varepsilon_{i_p}.
\]
Define $\partial_1 : \bigwedge^1 R^k \to R$ by $\partial_1(\varepsilon_j) := 1 - s_j$,
and let $\eta : R \to \Z$ be the augmentation homomorphism determined by $\eta(s_i) = 1$
for each $i$. Then
\[\textstyle
0 \longrightarrow \bigwedge^k R^k \xrightarrow{\,\partial_k\,} \cdots
    \xrightarrow{\,\partial_2\,} \bigwedge^1 R^k \xrightarrow{\,\partial_1\,} R
    \xrightarrow{\,\eta\,} \Z
\]
is a free resolution of $\Z$. Hence, by definition of homology with coefficients in the
$\Z^k$-module $\widetilde{A}$ \cite[Equation~III(1.1)]{brown}, we have $H_*(\Z^k,
\widetilde{A}) \cong H_*(\bigwedge^* R^k \otimes_R \widetilde{A})$. As a group we have
$\bigwedge^p R^k \otimes_R \widetilde{A} \cong \bigwedge^* \Z^k \otimes \widetilde{A}$,
and this isomorphism intertwines the boundary maps in the complex defining
$H_*(\bigwedge^* R^k \otimes_R \widetilde{A})$ with the maps $\widetilde{\partial}_p$.
\end{proof}

We can now state our main theorem for this section, which is a computation of the
homology of the Deaconu--Renault groupoid $\G(X, \sigma)$ associated to an action of
$\N^k$ by surjective local homeomorphisms of a totally disconnected locally compact space
$X$.

\begin{theorem}\label{thm:main H* computation}
Let $X$ be a second-countable totally disconnected locally compact space, and let
$\sigma$ be an action of $\N^k$ by surjective local homeomorphisms $\sigma_p : X \to X$.
For $1 \le p \le k$, let
\[\textstyle
A^\sigma_p = \bigwedge^p \Z^k \otimes C_c(X, \Z)
\]
and let $A^\sigma_p = \{ 0 \}$ for $p > k$. For $p \ge 1$, define $\partial_p :
A^\sigma_p \to A^\sigma_{p-1}$ by
\[
\partial_p(\epsilon_{i_1} \wedge \dots \wedge \epsilon_{i_p} \otimes f)
    = \begin{cases}
    \id \otimes (\id - \sigma_*^{e_{i_1}}) f  & \text{if $p= 1$,}\\
    \displaystyle \sum^p_{j=1} (-1)^{j+1} \epsilon_{i_1} \wedge \dots \wedge \widehat{\epsilon}_{i_j} \wedge \dots \wedge \epsilon_{i_p} \otimes (\id - \sigma^{e_{i_j}}_*)f & \text{if $2 \le p \le k$,} \\
    0 &\text{if $p \ge k+1$.}
    \end{cases}
\]
Then $(A^\sigma_*, \partial_*)$ is a complex, and
\[
H_*(\G(X, \sigma)) \cong H_*(\Z^k, H_0(\G(X, \sigma) \times_c \Z^k)) \cong H_*(A^\sigma_*, \partial_*).
\]
We have $H_p ( (\Z^k, H_0(\G(X, \sigma) \times_c \Z^k)) = 0$ for $p > k$.
\end{theorem}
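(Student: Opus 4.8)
The plan is to assemble the theorem from the three preceding lemmas, proving in turn its four assertions: that $(A^\sigma_*, \partial_*)$ is a complex, the two isomorphisms, and the vanishing above degree $k$. The key observation throughout is that the complex $A^\sigma_*$ in the statement is exactly $\bigwedge^* \Z^k \otimes A$ with $A := C_c(X, \Z)$ and the endomorphisms $\sigma_i := \sigma^{e_i}_*$, so that Lemmas~\ref{lem:homology reduction} and~\ref{lem:homology with limits} apply verbatim.

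First I would verify the hypothesis of Lemma~\ref{lem:homology reduction}, namely that the $\sigma^{e_i}_*$ pairwise commute. This follows from functoriality of the pushforward: the defining summation \eqref{eq:homology induced map} gives $(\psi \circ \phi)_* = \psi_* \circ \phi_*$ for composable local homeomorphisms, and because $\sigma$ is an action of $\N^k$ we have $\sigma^{e_i} \circ \sigma^{e_j} = \sigma^{e_i + e_j} = \sigma^{e_j} \circ \sigma^{e_i}$, whence $\sigma^{e_i}_* \sigma^{e_j}_* = \sigma^{e_j}_* \sigma^{e_i}_*$. Lemma~\ref{lem:homology reduction} then gives $\partial_{p-1} \circ \partial_p = 0$ directly, establishing that $(A^\sigma_*, \partial_*)$ is a complex, and matching its boundary maps with the displayed ones is a line-by-line comparison of formulas.

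Next, for the isomorphism $H_*(\G(X,\sigma)) \cong H_*(\Z^k, H_0(\G(X,\sigma) \times_c \Z^k))$ I would invoke Matui's spectral sequence \cite[Theorem~3.8(1)]{Matui-2012}, whose $E^2$-page is $H_p(\Z^k, H_q(\G(X,\sigma) \times_c \Z^k))$ and which converges to $H_{p+q}(\G(X,\sigma))$. By Lemma~\ref{lem:H0 of skewprod as limit} the groupoid $\G(X,\sigma) \times_c \Z^k$ is AF, so $H_q = 0$ for $q \ge 1$; the $E^2$-page is then concentrated in the row $q = 0$, the spectral sequence collapses, and $H_p(\G(X,\sigma)) \cong H_p(\Z^k, H_0(\G(X,\sigma) \times_c \Z^k))$, exactly as recorded in the discussion preceding Lemma~\ref{lem:homology reduction}. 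For the second isomorphism, set $\widetilde{A} := \varinjlim_{\N^k}(A, \sigma^n_*)$; Lemma~\ref{lem:H0 of skewprod as limit} supplies a $\Z^k$-equivariant isomorphism $H_0(\G(X,\sigma) \times_c \Z^k) \cong \widetilde{A}$, while Lemma~\ref{lem:homology with limits} gives $H_*(A^\sigma_*) = H_*(\bigwedge^* \Z^k \otimes A) \cong H_*(\bigwedge^* \Z^k \otimes \widetilde{A}) \cong H_*(\Z^k, \widetilde{A})$. Chaining these yields $H_*(A^\sigma_*) \cong H_*(\Z^k, H_0(\G(X,\sigma) \times_c \Z^k))$. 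The vanishing for $p > k$ is then immediate, since under this identification the group in question is $H_p(A^\sigma_*)$ and $A^\sigma_p = \bigwedge^p \Z^k \otimes C_c(X,\Z) = 0$; equivalently it follows from the length-$k$ free resolution of $\Z$ over $\Z[\Z^k]$ exhibited in the proof of Lemma~\ref{lem:homology with limits}.

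The main obstacle is not any single computation but the careful tracking of the $\Z^k$-module structures across the three lemmas. Specifically, I must ensure that the $\Z^k$-action on $\widetilde{A}$ induced by the $\sigma^{e_i}_*$ is the very action that Lemma~\ref{lem:H0 of skewprod as limit} matches with the translation action on $H_0(\G(X,\sigma) \times_c \Z^k)$; it is this equivariance that legitimises passing from $H_*(\Z^k, \widetilde{A})$ to $H_*(\Z^k, H_0(\G(X,\sigma) \times_c \Z^k))$. Both lemmas are stated with exactly this compatibility in mind, so the verification reduces to confirming that the actions agree on the generating classes $[1_U]$, but this is the junction at which the argument could silently fail if the identifications were not aligned.
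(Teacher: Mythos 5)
Your proposal is correct and takes essentially the same route as the paper's own proof: the complex structure via Lemma~\ref{lem:homology reduction}, collapse of Matui's spectral sequence \cite[Theorem~3.8(1)]{Matui-2012} using the vanishing of $H_q(\G(X,\sigma)\times_c\Z^k)$ for $q\geq 1$ from Lemma~\ref{lem:H0 of skewprod as limit}, and then the identification $H_*(\Z^k, H_0(\G(X,\sigma)\times_c\Z^k)) \cong H_*(A^\sigma_*,\partial_*)$ by combining the $\Z^k$-equivariant isomorphism onto $\varinjlim(C_c(X,\Z),\sigma^n_*)$ with Lemma~\ref{lem:homology with limits}. The equivariance issue you rightly flag as the delicate junction is precisely what the final statement of Lemma~\ref{lem:H0 of skewprod as limit} supplies, so your argument aligns with the paper's in every essential step.
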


\begin{proof}
Lemma~\ref{lem:homology reduction} implies that $(A^\sigma_*, \partial_*)$ is a complex.

The automorphisms $\alpha_p : ((x, m, y), n) \mapsto ((x, m, y), n+p)$ of $\G(X, \sigma)
\times_c \Z^k$ induce an action $\widetilde{\alpha}_p$ of $\Z^k$ on $H_*(\G(X, \sigma)
\times_c \Z^k)$. This action $\widetilde{\alpha}$ makes each $H_p(\G(X, \sigma) \times_c
\Z^k)$ into a $\Z^k$-module, so it makes sense to discuss the homology groups $H_*(\Z^k,
H_q(\G(X, \sigma) \times_c \Z^k))$ of $\Z^k$ with coefficients in these modules. By
\cite[Theorem~3.8(1)]{Matui-2012} applied to the cocycle $c : \G(X, \sigma) \to \Z^k$,
there is a spectral sequence $E^r_{p,q}$ converging to $H_{p+q}(\G(X, \sigma))$
satisfying $E^2_{p,q} = H_p(\Z^k, H_q(\G(X, \sigma) \times_c \Z^k))$. Lemma~\ref{lem:H0
of skewprod as limit} shows that $H_q(\G(X, \sigma) \times_c \Z^k)$ is zero for $q \not=
0$, and therefore the differential maps on the $E^2$ page and above of the spectral
sequence are trivial. Hence $E^\infty_{p,q} = E^2 _{p,q}$ for all $p,q$ and so
\cite[Theorem 3.8(1)]{Matui-2012} shows that $H_p( \G ( X , \sigma ) ) \cong E^2_{p,0}
\cong H_p ( \Z^k , H_0 ( \G ( X , \sigma ) \times_c \Z^k ) )$ for all $p$.

The last statement of Lemma~\ref{lem:H0 of skewprod as limit} gives $H_*(\Z^k, H_0(\G(X,
\sigma) \times_c \Z^k)) \cong H_*(\Z^k, \varinjlim(X, \sigma^n_*))$.
Lemma~\ref{lem:homology with limits} shows that this is isomorphic to $H_*(A^\sigma_*,
\partial_*)$.
\end{proof}

Our next two results show that $\G(X, \sigma)$  belongs to $\M$ if $k \le 2$. For $k = 1$
we use the Pimsner--Voiculescu sequence \cite[Theorem 2.4]{pv}. To prove them, we need
the following lemma, which follows from a standard argument.

\begin{lemma}\label{lem:Takai duality}
Let $X$ be a second-countable locally compact totally disconnected space. Let $\sigma$ be
an action of $\N^k$ on $X$ by surjective local homeomorphisms. Let $c : \G(X, \sigma) \to
\Z^k$ be the cocycle $c(x, m, y) = m$. Then the action of $\Z^k$ on $\G(X, \sigma)
\times_c \Z$ given by $\alpha_p((x, m, y), n) = ((x, m, y), n+p)$ induces an action
$\bar{\alpha}$ of $\Z^k$ on $C^*(\G(X, \sigma) \times_c \Z)$. The crossed product
$C^*(\G(X, \sigma) \times_c \Z) \rtimes_{\bar{\alpha}} \Z^k$ is stably isomorphic to
$C^*(G(X, \sigma))$.
\end{lemma}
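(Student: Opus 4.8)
The plan is to deduce the statement from Takai duality applied to the gauge action. Recall from Section~\ref{s:definitions} that the cocycle $c$ determines a gauge action $\beta$ of $\T^k$ on $C^*(\G(X,\sigma))$ by $(\beta_z f)(\gamma) = z^{c(\gamma)} f(\gamma)$, and that there is an isomorphism
\[
\Phi : C^*(\G(X,\sigma) \times_c \Z^k) \xrightarrow{\ \cong\ } C^*(\G(X,\sigma)) \rtimes_\beta \T^k
\]
carrying $f \in C_c(\G(X,\sigma) \times \{n\})$ to the function $z \mapsto z^n (g \mapsto f(g,n))$. Since $\T^k$ and $\Z^k$ are amenable, each crossed product below agrees with its reduced counterpart, and $C^*(\G(X,\sigma)) = C^*_r(\G(X,\sigma))$ for Deaconu--Renault groupoids; so it suffices to identify $\bar\alpha$, under $\Phi$, with the dual action $\hat\beta$ of $\widehat{\T^k} = \Z^k$, and then invoke Takai duality.

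The crux is the intertwining $\Phi \circ \bar\alpha_p = \hat\beta_p \circ \Phi$ for each $p \in \Z^k$, which I would verify on the dense subalgebra by evaluating both sides on an $f$ supported on $\G(X,\sigma) \times \{n\}$. The translation automorphism $\bar\alpha_p$ acts by $(\bar\alpha_p f)(\gamma,a) = f(\gamma, a-p)$, so it sends such an $f$ to a function supported on $\G(X,\sigma) \times \{n+p\}$ with value $f(\gamma,n)$ there; applying $\Phi$ produces $z \mapsto z^{n+p}(g \mapsto f(g,n)) = z^p\,\Phi(f)(z)$. With the pairing $\langle z, p\rangle = z^p$ identifying $\Z^k$ with $\widehat{\T^k}$, the dual action is exactly $(\hat\beta_p F)(z) = z^p F(z)$ on $F \in C(\T^k, C^*(\G(X,\sigma)))$, so $\hat\beta_p(\Phi(f)) = z \mapsto z^p \Phi(f)(z)$ as well. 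As such $f$ span a dense subalgebra and both maps are automorphisms, this shows $\Phi$ intertwines $\bar\alpha$ and $\hat\beta$.

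Granting the intertwining, Takai duality for the action $\beta$ of the compact abelian group $\T^k$ gives a natural isomorphism
\[
\bigl(C^*(\G(X,\sigma)) \rtimes_\beta \T^k\bigr) \rtimes_{\hat\beta} \Z^k \;\cong\; C^*(\G(X,\sigma)) \otimes \K(L^2(\T^k)).
\]
Combining this with $\Phi$ and the previous paragraph yields
\[
C^*(\G(X,\sigma) \times_c \Z^k) \rtimes_{\bar\alpha} \Z^k \;\cong\; C^*(\G(X,\sigma)) \otimes \K(L^2(\T^k)),
\]
and since $L^2(\T^k)$ is separable and infinite dimensional the right-hand side is the stabilisation of $C^*(\G(X,\sigma))$. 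Hence the two algebras are stably isomorphic, as claimed.

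The only point I expect to require genuine care is the intertwining computation in the second paragraph: lining up the pairing conventions for the dual action and for $\Phi$ so that the translation action $\bar\alpha$ is carried \emph{precisely} (not merely up to conjugation by an automorphism of $\Z^k$) onto $\hat\beta$. Everything else is a direct appeal to Takai duality, together with the amenability remarks that let us ignore the full/reduced distinction and the identification of $C^*(\G(X,\sigma)) \otimes \K$ with the stabilisation of $C^*(\G(X,\sigma))$.
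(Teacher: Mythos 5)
Your proposal is correct and follows essentially the same route as the paper: identify $C^*(\G(X,\sigma)\times_c\Z^k)$ with $C^*(\G(X,\sigma))\rtimes\T^k$ via the gauge action, check that this isomorphism intertwines $\bar\alpha$ with the dual action, and conclude by Takesaki--Takai duality. Your explicit verification of the intertwining (which the paper merely asserts) and the observation that $C^*(\G(X,\sigma))\otimes\mathcal{K}(L^2(\T^k))$ is already the stabilisation are both accurate.
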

\begin{proof}
Every automorphism of a groupoid induces an automorphism of its $C^*$-algebra, and then
simpler calculations establish the first statement. Let $\gamma : \T^k \to \Aut(C^*(\G(X,
\sigma)))$ be the gauge action $\gamma_z(f(x, m, y)) = z^m f(x, m, y)$. Then there is an
isomorphism $\theta : C^*(\G(X, \sigma) \times_c \Z^k) \to C^*(G(X, \sigma))
\times_\gamma \T^k$ such that for $f \in C_c(\G(X, \sigma) \times \{m\}) \subseteq
C_c(\G(X, \sigma) \times_c \Z^k)$, we have
\[
\big(\theta(f)(z)\big)(g) = z^m f(g, m).
\]
This $\theta$ intertwines $\bar{\alpha}$ and the dual action $\widehat\gamma$ on
$C^*(G(X, \sigma)) \times_\gamma \T^k$, and so Takesaki--Takai duality implies that
$C^*(\G(X, \sigma) \times_c \Z^k) \rtimes_{\bar{\alpha}} \Z^k$ is stably isomorphic to
$C^*(\G(X, \sigma))$ (see \cite[Theorem 4.5]{Takesaki}, \cite[Theorem 3.4]{Takai}, and
\cite[Theorem~1.2]{bgr}).
\end{proof}

\begin{theorem}\label{thm:DR k=1}
Let $X$ be a second-countable locally compact totally disconnected locally compact space,
let $\sigma : X \to X$ be a surjective local homeomorphism, and let $\sigma_* : C_c ( X ,
\Z ) \to C_c ( X , \Z )$ be the induced map. Then
\begin{align*}
K_0 ( C^* ( \G ( X , \sigma ))   &\cong   H_0 ( \G (X, \sigma ) )  \cong \coker (\id - \sigma_* ) ,\\
K_1 ( C^* ( \G ( X , \sigma ) ) &\cong H_1 ( \G ( X, \sigma ) )  \cong \ker (\id - \sigma_* ) , \\
\text{and } H_n ( \G ( X, \sigma ) ) &= 0 \text{ for } n \ge 2.
\end{align*}
In particular, $\G(X, \sigma)$ belongs to $\M$.
\end{theorem}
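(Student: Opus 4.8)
The plan is to read the homology directly off Theorem~\ref{thm:main H* computation}, and then to obtain the $K$-theory by combining Takai duality with the Pimsner--Voiculescu sequence, matching the two computations at the very end. First I would specialise Theorem~\ref{thm:main H* computation} to $k = 1$. Here $A^\sigma_0 = \bigwedge^0 \Z \otimes C_c(X, \Z) \cong C_c(X, \Z)$ and $A^\sigma_1 = \bigwedge^1 \Z \otimes C_c(X, \Z) \cong C_c(X, \Z)$, while $A^\sigma_p = 0$ for $p \ge 2$, and under these identifications the only nonzero boundary map is $\partial_1 = \id - \sigma_*$. Thus the complex is $0 \to C_c(X, \Z) \xrightarrow{\,\id - \sigma_*\,} C_c(X, \Z) \to 0$, and Theorem~\ref{thm:main H* computation} immediately yields $H_0(\G(X, \sigma)) \cong \coker(\id - \sigma_*)$, $H_1(\G(X, \sigma)) \cong \ker(\id - \sigma_*)$, and $H_n(\G(X, \sigma)) = 0$ for $n \ge 2$.

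For the $K$-theory, I would set $B := C^*(\G(X, \sigma) \times_c \Z)$. By Lemma~\ref{lem:skew gpd homology} the groupoid $\G(X, \sigma) \times_c \Z$ is AF, so $B$ is an AF algebra; hence $K_1(B) = 0$, and by Corollary~\ref{cor:lim homology} together with Lemma~\ref{lem:H0 of skewprod as limit} we have $K_0(B) \cong H_0(\G(X, \sigma) \times_c \Z) \cong \varinjlim(C_c(X, \Z), \sigma_*)$. By Lemma~\ref{lem:Takai duality} the crossed product $B \rtimes_{\bar\alpha} \Z$ is stably isomorphic to $C^*(\G(X, \sigma))$, so $K_j(C^*(\G(X, \sigma))) \cong K_j(B \rtimes_{\bar\alpha} \Z)$. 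I would then feed $B \rtimes_{\bar\alpha} \Z$ into the Pimsner--Voiculescu exact sequence \cite[Theorem~2.4]{pv}. Since $K_1(B) = 0$, this sequence degenerates to a short exact sequence $0 \to K_1(B \rtimes_{\bar\alpha} \Z) \to K_0(B) \xrightarrow{\,\id - \bar\alpha_*\,} K_0(B) \to K_0(B \rtimes_{\bar\alpha} \Z) \to 0$, giving $K_0(B \rtimes_{\bar\alpha} \Z) \cong \coker(\id - \bar\alpha_*)$ and $K_1(B \rtimes_{\bar\alpha} \Z) \cong \ker(\id - \bar\alpha_*)$.

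To finish I would identify these cokernels and kernels with the homology groups. The automorphism $\bar\alpha_*$ of $K_0(B)$ is induced by the groupoid automorphism $\alpha_1$, and under the isomorphism $K_0(B) \cong \varinjlim(C_c(X, \Z), \sigma_*)$ it corresponds to the automorphism $\widetilde\sigma_*$ of the direct limit; this is the equivariance recorded in the last sentence of Lemma~\ref{lem:H0 of skewprod as limit}, transported across the natural isomorphism $K_0 \cong H_0$ of Corollary~\ref{cor:lim homology}. Finally, Lemma~\ref{lem:homology with limits} in the case $k = 1$ says exactly that the canonical map $C_c(X, \Z) \to \varinjlim(C_c(X, \Z), \sigma_*)$ induces isomorphisms $\coker(\id - \sigma_*) \cong \coker(\id - \widetilde\sigma_*)$ and $\ker(\id - \sigma_*) \cong \ker(\id - \widetilde\sigma_*)$. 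Chaining these identifications gives $K_0(C^*(\G(X, \sigma))) \cong \coker(\id - \sigma_*) \cong H_0(\G(X, \sigma))$ and $K_1(C^*(\G(X, \sigma))) \cong \ker(\id - \sigma_*) \cong H_1(\G(X, \sigma))$, and since $H_n(\G(X, \sigma)) = 0$ for $n \ge 2$ the membership $\G(X, \sigma) \in \M$ follows.

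The step I expect to require the most care is the equivariance claim: one must verify that the isomorphism $K_0(B) \cong \varinjlim(C_c(X, \Z), \sigma_*)$ genuinely intertwines the Pimsner--Voiculescu automorphism $\bar\alpha_*$ with the limit automorphism $\widetilde\sigma_*$, so that the two maps $\id - \bar\alpha_*$ and $\id - \widetilde\sigma_*$ are matched. Both actions descend from the single groupoid automorphism $\alpha_1$, so this is a functoriality check rather than a fresh computation, but it is the hinge on which the comparison between $K$-theory and homology turns.
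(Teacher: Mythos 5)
Your proposal is correct and follows essentially the same route as the paper: homology from the $k=1$ specialisation of Theorem~\ref{thm:main H* computation}, $K$-theory from Lemma~\ref{lem:Takai duality} plus the Pimsner--Voiculescu sequence applied to the AF algebra $C^*(\G(X,\sigma)\times_c\Z)$, and the two matched by the equivariance of the isomorphism $K_0(C^*(\G(X,\sigma)\times_c\Z))\cong H_0(\G(X,\sigma)\times_c\Z)\cong\varinjlim(C_c(X,\Z),\sigma_*)$. The only cosmetic difference is that you compare $\id-\bar\alpha_*$ with $\id-\sigma_*$ directly via Lemma~\ref{lem:homology with limits}, whereas the paper routes the same comparison through the group homology $H_*(\Z,H_0(\G(X,\sigma)\times_c\Z))$; the content is identical.
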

\begin{proof}
We first calculate the $K$-theory of $C^* ( \G(X, \sigma) ) $. Lemma~\ref{lem:Takai
duality} applied with $k = 1$ shows that $K_*(C^*(\G(X, \sigma))) \cong K_*(C^*(\G(X,
\sigma) \times_c \Z) \rtimes_{\bar{\alpha}} \Z)$. Corollary~\ref{cor:lim homology} shows
that $K_1(C^*(\G(X, \sigma) \times_c \Z)) = 0$, and so exactness of the
Pimsner--Voiculescu sequence \cite[Theorem 2.4]{pv} implies that
\begin{equation} \label{eq:kokoko}
K_0(C^*(\G(X, \sigma))) \cong \coker(\id - \bar{\alpha}_*)
    \quad\text{ and }\quad
K_1(C^*(\G(X, \sigma))) \cong \ker(\id - \bar{\alpha}_*).
\end{equation}

We now compute the homology of $\G(X, \sigma)$. Theorem~\ref{thm:main H* computation}
shows that $H_p(\G(X, \sigma)) \cong H_p(\Z, H_0(\G(X, \sigma) \times_c \Z))$ for all
$p$. Moreover $H_p ( \Z, H_0(\G(X, \sigma) \times_c \Z)) = 0$ for $p \ge 2$ by
Theorem~\ref{thm:main H* computation}.

Let $\alpha_*$ be the action of $\Z$ on $H_0 ( \G(X, \sigma) \times_c \Z )$ induced by
$\alpha$. By \cite[Example III.1.1]{brown},
\begin{equation}\label{eq:hohoho}
\begin{split}
 H_0(\Z, H_0(\G(X, \sigma) \times_c \Z)) &\cong \ker (\id - \alpha_*) , \quad \text{ and } \\
 H_1(\Z, H_0(\G(X, \sigma) \times_c \Z)) &\cong \coker (\id - \alpha_*).
\end{split}
\end{equation}

Recall from Lemma~\ref{lem:skew gpd homology} that $\G(X, \sigma) \times_c \Z$ is an AF groupoid.
The isomorphism between $H_0(\G(X, \sigma)
\times_c \Z)$ and $K_0(C^*(\G(X, \sigma) \times_c \Z))$ supplied by
Corollary \ref{cor:lim homology} intertwines the $\Z$-actions $\alpha_*$ and
 $\bar\alpha_*$. Thus $\coker(\id - \bar{\alpha}_*) \cong \coker(\id - \alpha_*)$ and
 $\ker(\id - \bar{\alpha}_*) \cong \ker(\id - \alpha_*)$. Hence by \eqref{eq:kokoko} and \eqref{eq:hohoho},
\[
K_0 ( C^* ( \G ( X , \sigma ))   \cong   H_0 ( \G (X, \sigma ) )  \quad \text{and} \quad
K_1 ( C^* ( \G ( X , \sigma ) ) \cong H_1 ( \G ( X, \sigma ) ) .
\]
Since $k=1$ here, the complex $A_*^\sigma$ of Theorem~\ref{thm:main H* computation}
reduces to
\[
0 \longrightarrow C_c ( X , \Z ) \longrightarrow C_c ( X , \Z ) \longrightarrow 0
\]
where the central map is $\id - \sigma_*$. Then we have
\begin{align*}
H_0 ( \G ( X , \sigma ) ) &\cong H_0 ( A^\sigma_* , \partial_* ) \cong \coker (\id - \sigma_*) , \text{ and } \\
H_1 ( \G ( X , \sigma ) ) &\cong H_1 ( A^\sigma_* , \partial_* ) \cong \ker (\id - \sigma_*)
\end{align*}
as required.
\end{proof}

\begin{remark}\label{rmk:1graphHK}
Matui's  paper \cite{Matui-2012}, together with recent results by Hazrat and Li
\cite{HazLi}  and Ortega \cite{ortega} suggest that the groupoids of (not necessarily
finite) $1$-graphs with no sources belong to $\M$. This now follows from
Theorem~\ref{thm:DR k=1}, since graph groupoids are, by definition, rank-1
Deaconu--Renault groupoids.
\end{remark}

We now discuss  Kasparov's $K$-theory spectral sequence for $C^* ( \G ( X , \sigma ) )$.
Lemma~\ref{lem:Takai duality} shows that $C^* ( \G ( X , \sigma ) )$ is stably isomorphic
to the crossed product $C^*(\G(X, \sigma) \times_c \Z^k) \times_{\bar{\alpha}} \Z^k$.
Hence Theorem~6.10 of \cite{kasparov} (see also \cite[\S 3]{evans}) shows that there is a
spectral sequence $E^r_{p,q}$ converging to $K_*(C^* ( \G ( X , \sigma ) ))$ with
$E^2$-page given by
\[
E^2_{p,q} = \begin{cases} H_p ( \Z^k , K_q ( C^* ( \G (X, \sigma ) ) \times_c \Z^k ) )
& \text{ if } q \text{ is even, and } 0 \le p \le k \\
0 & \text{ otherwise.}
\end{cases}
\]
The differential maps in the spectral sequence are maps $d^r_{p,q} : E^r_{p,q} \to
E^r_{p-r,q+r-1}$. If $r > k$, then for any $p,q$ at least one of $E^r_{p,q}$ and
$E^r_{p-r, q+r-1}$ is trivial,  because $E^r_{p,q}$ is nontrivial only for $0 \le p \le
k$. Hence $d^r_{p,q}$ is trivial for $r > k$. Thus $E^\infty_{p,q} = E^{k+1}_{p,q}$. If
$k$ is even, we can improve on this: if $r = k$ is even, then at least one of $E^r_{p,q}$
and $E^r_{p-r, q+r-1}$ is trivial because $E^r_{p,q}$ is nontrivial only for $q$ even,
and it follows that $E^\infty_{p,q} = E^k_{p,q}$ for all $p,q$. In particular, if $k =
2$, then we have $E^\infty_{p,q} = E^2_{p,q}$.

For our next theorem, we need the well-known fact that if $X$ is locally compact
Hausdorff space, then $C_c(X, \Z)$ is a free abelian group. We provide a proof for
completeness.

\begin{lemma}\label{lem:Cc(X,Z) freel abelian}
Let $X$ be a second-countable locally compact Hausdorff space. Then $C_c(X, \Z)$ is a
free abelian group.
\end{lemma}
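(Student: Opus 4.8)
The plan is to exhibit $C_c(X,\Z)$ as an increasing union of finitely generated free subgroups in which each term is a \emph{direct summand} of the next; such a union is automatically free, so this reduces the whole problem to controlling the quotients of the filtration. First I would record two easy structural facts. Torsion-freeness is immediate, since $C_c(X,\Z)$ is a subgroup of the product $\Z^X$ of copies of $\Z$. For countability, note that every $f \in C_c(X,\Z)$ is locally constant with compact support, so its image is finite and each nonzero level set $f^{-1}(a)$ is compact and open; thus $f = \sum_{a \ne 0} a\,1_{f^{-1}(a)}$ is a finite integer combination of indicator functions of compact open sets. Since $X$ is second countable, a compactness argument shows that every compact open set is a finite union of members of a fixed countable base, so there are only countably many compact open subsets of $X$, and hence $C_c(X,\Z)$ is countable and is generated by the functions $1_U$ with $U$ compact open.

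Next I would build the filtration. Enumerate the compact open sets as $U_1,U_2,\dots$, let $\mathcal{B}_n$ be the (finite) Boolean algebra of subsets of $X$ generated by $U_1,\dots,U_n$, and let $\mathcal{P}_n$ be its set of atoms, a finite partition of $U_1\cup\cdots\cup U_n$ into pairwise disjoint compact open sets. Setting $G_n$ to be the subgroup of functions that vanish off $U_1\cup\cdots\cup U_n$ and are constant on each atom, one sees that $G_n$ is free with basis $\{1_A : A \in \mathcal{P}_n\}$ and that $C_c(X,\Z)=\bigcup_n G_n$, since any given $f$ lies in $G_n$ as soon as $U_1,\dots,U_n$ include all of its (finitely many) level sets.

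The crux is to show that each inclusion $G_n \hookrightarrow G_{n+1}$ identifies $G_n$ with a direct summand, equivalently that $G_{n+1}/G_n$ is free. Because $\mathcal{P}_{n+1}$ refines $\mathcal{P}_n$, every old atom $A$ is a disjoint union of new atoms, so $1_A = \sum_{A'\subseteq A} 1_{A'}$. Choosing one sub-atom $A^\ast \subseteq A$ for each $A \in \mathcal{P}_n$, the relation $1_{A^\ast} = 1_A - \sum_{A'\subseteq A,\,A'\ne A^\ast} 1_{A'}$ is a unimodular change of basis, so $\{1_A : A \in \mathcal{P}_n\}$ together with the $1_{A'}$ for the non-chosen atoms $A'$ forms a basis of $G_{n+1}$. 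Hence $G_n$ is a direct summand and $G_{n+1}/G_n$ is free on the images of the non-chosen atoms. Finally, choosing complements $G_{n+1} = G_n \oplus H_{n+1}$ (with $H_1 := G_1$) gives $C_c(X,\Z) = \bigoplus_n H_n$, a direct sum of free groups, and therefore free.

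I expect the main obstacle to be precisely this direct-summand condition: an arbitrary filtration of a countable torsion-free group by finitely generated (hence free) subgroups need not have free quotients, and the union can then fail to be free, as $\Q = \bigcup_n \tfrac{1}{n!}\Z$ shows. Routing the filtration through the atoms of the Boolean algebras generated by the compact open sets is exactly what forces the connecting maps to be unimodular, and so guarantees the purity needed to split each stage; verifying that this refinement structure genuinely yields a unimodular transition is where the real content lies, while countability and torsion-freeness are routine.
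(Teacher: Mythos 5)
Your proof is correct, and it takes a genuinely different route from the paper. The paper first reduces to the compact case via the one-point compactification, then invokes the Urysohn metrisation theorem and the Alexandroff--Hausdorff theorem to produce a continuous surjection $\{0,1\}^\infty \to X$, pulls back along it to embed $C(X,\Z)$ into $C(\{0,1\}^\infty,\Z)$, appeals to the fact that subgroups of free abelian groups are free, and finally writes down an explicit basis of $C(\{0,1\}^\infty,\Z)$ indexed by words ending in $1$. You instead work intrinsically in $X$: you filter $C_c(X,\Z)$ by the finitely generated free subgroups $G_n$ spanned by indicators of the atoms of the Boolean algebras generated by an enumeration of the compact open sets, and you verify that each inclusion $G_n\hookrightarrow G_{n+1}$ is split because refinement of atoms gives a unimodular change of basis. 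Your version is more self-contained --- it avoids metrisation, the Cantor surjection, and the (nontrivial in general) subgroup theorem for free abelian groups, and it handles the non-compact case without the compactification step --- at the cost of not producing a single explicit global basis the way the paper's Cantor-set computation does. Your diagnosis of where the content lies is also exactly right: the $\Q=\bigcup_n \tfrac{1}{n!}\Z$ example shows that countability plus torsion-freeness is not enough, and it is precisely the atom refinement that forces the transition matrices to be unimodular and each $G_n$ to be pure in the union. All the supporting details check out: atoms of the Boolean algebra generated by compact open (hence clopen) sets are again compact open; second countability gives only countably many compact open sets; and a nonzero $f\in C_c(X,\Z)$ lies in $G_n$ once its finitely many nonzero level sets appear among $U_1,\dots,U_n$.
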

\begin{proof}
First note that if $X$ is not compact, then $C(X \cup \{\infty\}, \Z) \cong C_c(X,\Z)
\oplus \Z$ via $f \mapsto (f - f(\infty)1, f(\infty))$, so it suffices to prove the
result for $X$ compact.

So suppose that $X$ is compact. Since $X$ is metrisable by the Urysohn metrisation
theorem (see \cite[Theorems 23.1~and~17.6(a)]{Willard}), the Alexandroff--Hausdorff
theorem (see \cite[Theorem~30.7]{Willard}) shows that there is a continuous surjection
$\phi : \{0,1\}^\infty \to X$. Hence $\phi^* : C(X, \Z) \to C(\{0,1\}^\infty, \Z)$ is an
injective group homomorphism. Since subgroups of free abelian groups are themselves free
abelian, it therefore suffices to show that $C(\{0,1\}^\infty, \Z)$ is free abelian.

For this, let $\{0,1\}^*$ denote the collection of all finite words in the symbols $0,1$,
including the empty word $\varepsilon$. Let $I = \{\varepsilon\} \cup \{\omega 1 : \omega
\in \{0,1\}^*\}$ denote the subset of $\{0,1\}^*$ consisting of the empty word and all
nontrivial words that end with a 1. We claim that $B := \{1_{Z(\omega)} : \omega \in I\}$
is a family of free abelian generators of $C(\{0,1\}^\infty, \Z)$. To see this, we first
argue by induction on $n$ that  $\lsp_\Z\{1_{Z(\omega)} : \omega \in I\text{ and
}|\omega| \le n\} = \lsp_\Z\{1_{Z(\omega)} : \omega \in \{0,1\}^n\}$ for all $n$. The
containment $\subseteq$ is trivial. The containment $\supseteq$ is also trivial for $n =
0$, and if it holds for $n = k$, then for each $\omega = \omega'0 \in \{0,1\}^{k+1}$ that
ends in a 0, we have $1_{Z(\omega)} = 1_{Z(\omega')} - 1_{Z(\omega' 1)}$. We have
$\omega'1 \in I$ with $|\omega'1| = k+1$, and $1_{Z(\omega')} \in \lsp_\Z\{1_{Z(\omega)}
: \omega \in I\text{ and }|\omega| \le k\}$ by the inductive hypothesis. So the
containment $\supseteq$ also holds for $n = k+1$. Hence $B$ generates $C(\{0,1\}^\infty,
\Z)$ as a group. To see that $B$ is a family of free generators, suppose for
contradiction that $F \subseteq I$ is a finite set and $\{a_\omega : \omega \in F\}$ are
nonzero integers such that $\sum_{\omega \in F} a_\omega 1_{V_\omega} = 0$. Fix $\mu \in
F$ of minimal length. Then $\mu 000\cdots \in \{0,1\}^\infty$ belongs to $Z(\mu)$ but not
to $Z(\omega)$ for any $\omega \in F \setminus \{\mu\}$. Hence $0 = \big(\sum_{\omega \in
F} a_\omega 1_{V_\omega}\big)(\mu 000\cdots) = a_\mu$ contradicting the assumption that
the $a_\omega$ are nonzero.
\end{proof}

\begin{theorem}\label{thm:DR k=2}
Let $X$ be a second-countable locally compact totally disconnected space. Let $\sigma$ be
an action of $\N^2$ on $X$ by surjective local homeomorphisms. Define $d_2 : C_c(X, \Z)
\to C_c(X, \Z) \oplus C_c(X, \Z)$ by $d_2(f) = ((\sigma_*^{e_2} - \id)f, (\id -
\sigma_*^{e_1})f)$ and define $d_1 : C_c(X, \Z) \oplus C_c(X, \Z) \to C_c(X, \Z)$ by
$d_1(f \oplus g) = (\id - \sigma_*^{e_1})f + (\id - \sigma^{e_2}_*)g$. Then
\begin{align*}
K_0(C^*( \G(X, \sigma)) &\cong H_0( \G(X, \sigma)) \oplus H_2( \G(X, \sigma)) \cong \coker(d_1) \oplus \ker(d_2),\quad\text{ and}\\
K_1(C^*( \G(X, \sigma)) &\cong H_1(\ G(X, \sigma)) \cong \ker(d_1)/\operatorname{image}(d_2).
\end{align*}
In particular, $\G(X, \sigma)$ belongs to $\M$.
\end{theorem}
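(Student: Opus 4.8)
The plan is to compute both sides of the claimed isomorphisms and then match them: the homology side directly from the complex $A^\sigma_*$, and the $K$-theory side from Kasparov's spectral sequence. For the homology, Theorem~\ref{thm:main H* computation} gives $H_*(\G(X,\sigma)) \cong H_*(A^\sigma_*, \partial_*)$, so I would first unwind this complex when $k = 2$. Under the identifications $\bigwedge^0 \Z^2 \cong \Z$, $\bigwedge^1 \Z^2 \cong \Z^2$ (with basis $\epsilon_1, \epsilon_2$), and $\bigwedge^2 \Z^2 \cong \Z$ (with basis $\epsilon_1 \wedge \epsilon_2$), the complex takes the form
\[
0 \longrightarrow C_c(X, \Z) \xrightarrow{\,\partial_2\,} C_c(X, \Z) \oplus C_c(X, \Z) \xrightarrow{\,\partial_1\,} C_c(X, \Z) \longrightarrow 0,
\]
and substituting into the boundary formula of Theorem~\ref{thm:main H* computation} shows $\partial_2 = d_2$ and $\partial_1 = d_1$. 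Reading off homology then yields $H_0(\G(X,\sigma)) \cong \coker(d_1)$, $H_1(\G(X,\sigma)) \cong \ker(d_1)/\operatorname{image}(d_2)$, and $H_2(\G(X,\sigma)) \cong \ker(d_2)$.

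For the $K$-theory, I would begin with Lemma~\ref{lem:Takai duality}, which makes $C^*(\G(X,\sigma))$ stably isomorphic to $C^*(\G(X,\sigma) \times_c \Z^2) \rtimes_{\bar\alpha} \Z^2$, so the two algebras have the same $K$-theory, and then feed this crossed product into Kasparov's spectral sequence as set up just before the theorem. Since $\G(X,\sigma) \times_c \Z^2$ is AF by Lemma~\ref{lem:H0 of skewprod as limit}, Corollary~\ref{cor:lim homology} gives $K_1(C^*(\G(X,\sigma) \times_c \Z^2)) = 0$ and $K_0(C^*(\G(X,\sigma) \times_c \Z^2)) \cong H_0(\G(X,\sigma) \times_c \Z^2)$ by an isomorphism intertwining the two $\Z^2$-actions (exactly as in the proof of Theorem~\ref{thm:DR k=1}). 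Combining this equivariant identification with Lemmas~\ref{lem:H0 of skewprod as limit} and~\ref{lem:homology with limits}, I would identify the $E^2$-page as $E^2_{p,q} \cong H_p(A^\sigma_*)$ for every even $q$ and $0 \le p \le 2$, and as $0$ otherwise.

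Because $k = 2$, the discussion preceding the theorem gives $E^\infty_{p,q} = E^2_{p,q}$, so it remains to read the $K$-groups off the $E^\infty$-page. The odd total-degree subquotients occur only at $p = 1$, yielding $K_1(C^*(\G(X,\sigma))) \cong H_1(A^\sigma_*) \cong \ker(d_1)/\operatorname{image}(d_2)$ with no extension to resolve. The even total-degree subquotients occur at $p = 0$ and $p = 2$, giving a short exact sequence
\[
0 \longrightarrow \coker(d_1) \longrightarrow K_0(C^*(\G(X,\sigma))) \longrightarrow \ker(d_2) \longrightarrow 0.
\]

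The hard part will be resolving this extension, since the spectral sequence determines only the associated graded group. This is precisely where Lemma~\ref{lem:Cc(X,Z) freel abelian} enters: the quotient term $\ker(d_2)$ is a subgroup of the free abelian group $C_c(X,\Z)$, hence is itself free abelian and therefore projective, so the sequence splits. I would conclude $K_0(C^*(\G(X,\sigma))) \cong \coker(d_1) \oplus \ker(d_2) \cong H_0(\G(X,\sigma)) \oplus H_2(\G(X,\sigma))$. Together with the computation of $K_1$, this is exactly the isomorphism~\eqref{conj:HK} for $j = 0, 1$, and so $\G(X,\sigma) \in \M$.
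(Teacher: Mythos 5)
Your proposal is correct and follows essentially the same route as the paper: Takai duality via Lemma~\ref{lem:Takai duality}, the collapse of Kasparov's spectral sequence at the $E^2$-page for $k=2$, identification of the $E^2$-terms with $H_p(A^\sigma_*)$ via Corollary~\ref{cor:lim homology} and Theorem~\ref{thm:main H* computation}, and the splitting of the $K_0$-extension because $\ker(d_2)$ is a subgroup of the free abelian group $C_c(X,\Z)$ (Lemma~\ref{lem:Cc(X,Z) freel abelian}). The only cosmetic difference is that you make the identification of $\partial_*$ with $d_*$ explicit at the outset rather than at the end.
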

\begin{proof}
Let $A := K_0(C^*(\G(X, \sigma) \times_c \Z^2)) = H_0(\G(X, \sigma) \times_c \Z^2)$.
Lemma~\ref{lem:Takai duality} applied with $k = 2$ shows that $C^*(\G(X, \sigma))$ is
stably isomorphic to $C^*(\G(X, \sigma) \times_c \Z^2) \times_{\bar{\alpha}} \Z^2$ for
the action $\bar\alpha$ induced by translation in $\Z^2$ in the skew-product $\G(X,
\sigma) \times_c \Z^2$.

We follow the argument of \cite{evans, KPS1}. As discussed above, Kasparov's spectral
sequence \cite[Theorem 6.10]{kasparov} for $K_*(C^*(\G(X, \sigma) \times_c \Z^2)
\times_{\bar{\alpha}} \Z^2)$ converges on the second page, and we deduce that $K_0(C^*(\G(X,
\sigma))$ is an extension of $E^2_{0, 2}$ by $E^2_{2, 0}$ while $K_1(C^*(\G(X, \sigma))$
is isomorphic to $E^2_{0, 1}$.

As discussed prior to the statement of the theorem, $E^2_{p,q}$ is isomorphic to the
homology group $H_p(\Z^2, K_0(C^*(\G(X, \sigma)) \times_c \Z^k))$ for $q$ even and is
zero for $q$ odd. Corollary~\ref{cor:lim homology} shows that $K_0(C^*(\G(X, \sigma))
\times_c \Z^2) \cong H_0(\G(X, \sigma) \times_c \Z^2)$, and that this isomorphism
intertwines the actions of $\Z^2$ on the two groups induced by translation in the second
coordinate. It therefore follows from Theorem~\ref{thm:main H* computation} that for $q$
even, we have $E^2_{p,q} \cong H_p(A^\sigma_*, \partial_*)$. Since $C_c(X, \Z)$ is free
abelian by Lemma~\ref{lem:Cc(X,Z) freel abelian}, so is the subgroup $H_2(A^\sigma_*,
\partial_*) = \ker(\partial_2)$. Hence the extension $K_0(C^*(\G(X, \sigma))$ of $E^2_{0,
2}$ by $E^2_{2, 0}$ splits, and we obtain $K_0(C^*(\G(X, \sigma)) \cong H_0(A^\sigma_*,
\partial_*) \oplus H_2(A^\sigma_*, \partial_*)$ and $K_1(C^*(\G(X,\sigma))) \cong
H_1(A^\sigma_*, \partial_*)$. The result then follows from Theorem~\ref{thm:main H*
computation} because the obvious identifications $\bigwedge^j \Z^2 \otimes C(X, \Z) \cong
C(X, \Z)^{\binom{2}{j}}$ for $0 \le j \le 2$ intertwine $\partial_*$ with $d_*$.
\end{proof}

It follows that the path groupoid of a $2$-graph belongs to $\M$ since it is a rank-$2$
Deaconu-Renault groupoid (see Corollary~\ref{cor:HK for graphs}).

\begin{question}
Our proof Theorem~\ref{thm:DR k=2} uses that $H_2(A^\sigma_*)$ is a free abelian group so
that the extension $0 \to H_0(\G(X,\sigma)) \to K_0(C^*(\G(X, \sigma))) \to H_2(\G(X,
\sigma)) \to 0$ splits. Hence the map from $H_0(\G(X, \sigma)) \oplus H_2(\G(X, \sigma))$
to $K_0(C^*(\G(X, \sigma)))$ that we obtain is not natural. An interesting question
arises: can the isomorphism~\eqref{conj:HK} be chosen to be natural in some sense for
elements of $\M$ in general, and for rank-2 Deaconu--Renault groupoids in particular?
\end{question}

\begin{remark}
The proof of Theorem~\ref{thm:DR k=2} is special to the situation $k = 2$, and issues
arise already when $k = 3$. In this situation, the groups on the $E^3$-page of Kasparov's spectral
sequence coincide with those on the $E^2$-page, but the $E^3$-page has potentially nontrivial differential maps,
$d^3_{3,2l} : E^3_{3,2l} \to E^3_{0, 2l+2}$. So $E^3_{p,q} = H_p(A^\sigma_*)$ if $q$ is
even, and is 0 if $q$ is odd, and the $E^3$-page has the following form.
\[
\begin{tikzpicture}[yscale=0.4]
    \node (00) at (0, 0) {$H_0(A^\sigma_*)$};
    \node (10) at (3, 0) {$H_1(A^\sigma_*)$};
    \node (20) at (6, 0) {$H_2(A^\sigma_*)$};
    \node (30) at (9, 0) {$H_3(A^\sigma_*)$};
    \node (40) at (12,0) {$0$};
    \node (01) at (0, 3) {$0$};
    \node (11) at (3, 3) {$0$};
    \node (21) at (6, 3) {$0$};
    \node (32) at (9, 3) {$0$};
    \node (41) at (12,3) {$0$};
    \node (02) at (0, 6) {$H_0(A^\sigma_*)$};
    \node (12) at (3, 6) {$H_1(A^\sigma_*)$};
    \node (22) at (6, 6) {$H_2(A^\sigma_*)$};
    \node (32) at (9, 6) {$H_3(A^\sigma_*)$};
    \node (42) at (12,6) {$0$};
    \node (03) at (0, 9) {$0$};
    \node (13) at (3, 9) {$0$};
    \node (23) at (6, 9) {$0$};
    \node (33) at (9, 9) {$0$};
    \node (43) at (12,9) {$0$};
    \node at (14,0) {$\dots$};
    \node at (14,3) {$\dots$};
    \node at (14,6) {$\dots$};
    \node at (14,9) {$\dots$};
    \node at (0, 12) {$\vdots$};
    \node at (3, 12) {$\vdots$};
    \node at (6, 12) {$\vdots$};
    \node at (9, 12) {$\vdots$};
    \node at (12, 12) {$\vdots$};
    \draw[->] (30)--(02) node[pos=0.5, above] {$d^3_{3,0}$};
\end{tikzpicture}
\]
The sequence converges on the $E^4$-page, and so we have exact sequences
\begin{align*}
0 \to \coker(d^3_{3,0}) \to &K_0(C^*(\G(X, \sigma))) \to H_2(A^\sigma_*) \to 0
    \quad\text{ and }\\
0 \to H_1(A^\sigma_*) \to &K_1(C^*(\G(X, \sigma))) \to \ker(d^3_{3,0}) \to 0.
\end{align*}
Unless $d^3_{3,0}$ is trivial, there is no reason to expect that $\coker(d^3_{3,0}) \cong
H_0(A^\sigma_*)$; and even if $d^3_{3,0}$ is trivial, there is no reason to expect that
$H_2(A^\sigma_*)$ is free abelian, so the extension defining $K_0(C^*(\G(X, \sigma)))$
need not split. This suggests rank-3 Deaconu--Renault groupoids as a potential source of
counterexamples to Matui's HK-conjecture.
\end{remark}

\begin{remark}
If the groups $H_*(A^\sigma_*)$ are finitely generated, and the natural homomorphism
$H_0(\G(X, \sigma)) \to K_0(C^*(\G(X, \sigma)))$ is injective, then one would expect
$d^3_{3,0}$ to be trivial, and then in the rank-3 case, $\G(X, \sigma)$ would satisfy
Matui's conjecture up to stabilisation by $\Q$ (the so-called \emph{rational
HK-conjecture}). This suggests that it would be worthwhile to investigate when the
homomorphism $H_0(\G(X, \sigma)) \to K_0(C^*(\G(X, \sigma)))$ is injective.
\end{remark}

\section{\texorpdfstring{$k$}{k}-graphs}\label{sec:k-graphs}

In this section, we first establish the existence of a natural map from the homology of a
$k$-graph to the homology of its groupoid. We show  that this homomorphism is
in general neither injective nor surjective. We then apply the results of
Section~\ref{sec:DR groupoids} to see that all $1$-graph and $2$-graph groupoids belong
to $\M$. Finally, we restrict our attention to $k$-graphs with one vertex, and
demonstrate that for any such $k$-graph in which $\gcd(|\Lambda^{e_1}| - 1, \dots,
|\Lambda^{e_k}| -1) = 1$, the corresponding $k$-graph groupoid belongs to $\M$.

\subsection{A map from the categorical homology of a \texorpdfstring{$k$}{k}-graph to the homology of its groupoid}

To define the \emph{categorical}\footnote{so called because it matches with the
categorical cohomology of $\Lambda$ defined in \cite{KPS2}.} homology groups
$H_*(\Lambda)$ for a $k$-graph $\Lambda$, we use the following notation.

Given a $k$-graph $\Lambda$, let
\[
\Lambda^{*\,n} :=\begin{cases}
    \Lambda^0 &\text{ if $n = 0$}\\
    \{(\lambda_1,\ldots,\lambda_{n}) \in \prod^n_{i=1} \Lambda \mid s(\lambda_i) = r(\lambda_{i+1})\} &\text{ otherwise.}
    \end{cases}
\]
For $n \ge 2$, and for $i \in \{0, \ldots, n\}$  define a map $d_i : \Lambda^*_n \to
\Lambda^*_{n-1}$ by
\[
d_i(\lambda_1,\ldots,\lambda_{n} ) :=
\begin{cases}
 (\lambda_2,\ldots,\lambda_{n}) & \text{if  } i = 0,\\
 (\lambda_1, \lambda_2,\ldots, \lambda_i \lambda_{i+1},\ldots, \lambda_{n}) & \text{if  } 0 < i < n,\\
  (\lambda_1, \ldots, \lambda_{n-1}) & \text{if  } i = n;
\end{cases}
\]

\begin{definition} (cf.\  \cite[Remark 2.14]{gillaspy-wu}) Let  $\Lambda$  be a $k$-graph.
For $n \ge 0$, let $C_n(\Lambda) := \Z \Lambda^{*\,n}$, the free abelian group with
generators indexed by $\Lambda^{*\,n}$. Identifying elements of $\Lambda^{*\,n}$ with the
corresponding generators of $C_n(\Lambda)$, we regard the boundary maps $d_i$ as
homomorphisms $d_i : C_n(\Lambda) \to C_{n-1}(\Lambda)$. We obtain homomorphisms
$\partial_n : C_n(\Lambda) \to C_{n-1}(\Lambda)$ by $\partial_n := \sum^n_{i=1} (-1)^{i}
d_i$ for $n \ge 2$. Regarding $s, r$ as maps from $C_1(\Lambda)$ to $C_0(\Lambda)$ define
$\partial_1 : C_1(\Lambda) \to C_0(\Lambda)$ by $\partial_1 := s - r$ and   $\partial_0 :
C_0(\Lambda) \to \{0\}$ to be the zero map. Standard calculations show that
$(C_*(\Lambda), \partial_*)$ is a complex. The resulting groups $H_n(\Lambda) :=
\ker(\partial_n)/\operatorname{image}(\partial_{n+1})$ are called the \textit{categorical
homology groups} of $\Lambda$.
\end{definition}

Note that one may define a more general homology theory with coefficients in an abelian group $A$ as in
\cite{gillaspy-wu} but we do not need this level of generality.

\begin{example} \label{ex:1graphom}
Let $\Lambda$ be the $1$-graph with vertex connectivity matrix $\left(
\begin{smallmatrix} 5 & 2 \\ 2 & 3\end{smallmatrix} \right)$. By
\cite[Theorem~6.2]{gillaspy-wu} the categorical homology of $\Lambda$ coincides with its
cubical homology, which can computed as follows. Since $\Lambda$ is connected we have
$H_0 ( \Lambda ) \cong \Z$; since $\Lambda$ is finite its first homology group is free
abelian with rank equal to its Betti number $p = |\Lambda^1| - |\Lambda^0| + 1 = 11$.
Hence $H_1(\Lambda ) \cong \Z^{11}$.  Moreover, $H_n(\Lambda ) = 0$ for all $n > 1$. We
will return to this example in Remark~\ref{rmk:nolink} after establishing how to compute
the homology of the associated groupoid.
\end{example}

\begin{remark}
One can check that $C_0(\Lambda)$ together with the subgroups of the $C_n(\Lambda)$ for
$n \ge 1$ generated by elements $(\lambda_1, \cdots, \lambda_n)$ in which each $\lambda_i
\not \in \Lambda^0$ form a subcomplex under the same boundary maps $\partial_n$, and that
the  homology of this subcomplex is isomorphic to $H_*(\Lambda)$.
\end{remark}

Recall that if $\Lambda$ is a $k$-graph and $\lambda,\mu \in \Lambda$ satisfy $s(\lambda) =
s(\mu)$, then the cylinder set $Z(\lambda,\mu)$ is defined as $Z(\lambda,\mu) =
\{(\lambda x, d(\lambda) - d(\mu), \mu x) : x \in s(\lambda)\Lambda^\infty\} \subseteq \G_\Lambda$, and is a
compact open subset of $\G_\Lambda$.

\begin{definition}
Let $\Lambda$ be a $k$-graph. Let $\G_\Lambda$ be the associated groupoid
(see~\eqref{eq:dr-kgraph}) and for each $n$ let $\G_\Lambda^{(n)}$ be the collection of
composable $n$-tuples in $\G_\Lambda$ as in~\eqref{eq:gndef}. For $(\lambda_1, \dots,
\lambda_n) \in \Lambda^{*\,n} \subseteq C_n(\Lambda)$, define
\[\textstyle
Y(\lambda_1, \dots, \lambda_n) := \big(\prod^n_{i=1} Z(\lambda_i, s(\lambda_i))\big) \cap \G_\Lambda^{(n)}.
\]

Let $\Psi_* : C_*(\Lambda) \to C_c(\G_\Lambda^{(*)}, \Z)$ be the homomorphism such that
$\Psi_0(v) = 1_{Z(v)} \in C_c(\G_\Lambda^{(0)}, \Z)$ for $v \in \Lambda^0$, and
$\Psi_n(\lambda_1, \dots, \lambda_n) = 1_{Y(\lambda_1, \dots, \lambda_n)}$ for $n \ge 1$
and $(\lambda_1, \dots, \lambda_n) \in \Lambda^{*\,n}$.
\end{definition}

\begin{theorem}
\label{them-commuting-w-boundaries} Let $\Lambda $ be a $k$-graph.  The maps $\Psi_*:
{C}_*(\Lambda) \to C_c(\G_\Lambda^{(*)}, \Z)$ defined above comprise a chain map, and
induce a homomorphism $\Psi_* : H_*(\Lambda) \to H_*(\G_\Lambda)$.
\end{theorem}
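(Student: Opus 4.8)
The plan is to prove the two assertions in one stroke: if the family $\Psi_*$ intertwines the boundary maps, i.e. $\partial_n^{\G_\Lambda}\circ\Psi_n = \Psi_{n-1}\circ\partial_n^\Lambda$ for every $n$, then $\Psi_*$ is by definition a chain map, and functoriality of the construction (as in Remark~\ref{rmk:functoriality}) immediately produces the induced homomorphism $\Psi_*\colon H_*(\Lambda)\to H_*(\G_\Lambda)$. Since $C_*(\Lambda)$ is free on $\Lambda^{*\,n}$, it is enough to check the boundary identity on a single generator $(\lambda_1,\dots,\lambda_n)$, where $\Psi_n(\lambda_1,\dots,\lambda_n)=1_{Y(\lambda_1,\dots,\lambda_n)}$ and $Y(\lambda_1,\dots,\lambda_n)$ is a compact open bisection of $\G_\Lambda^{(n)}$. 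First I would record an explicit parametrisation of this bisection: the composability constraints $s(g_i)=r(g_{i+1})$ force each composable tuple to be determined by a single infinite path $x\in Z(s(\lambda_n))$, the entry indexed $i$ being $(\lambda_i\cdots\lambda_n x,\ d(\lambda_i),\ \lambda_{i+1}\cdots\lambda_n x)$. This normal form is the computational backbone of the whole argument.

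With the parametrisation in hand, the next step is to compute the pushforward $(d_i)_*1_{Y(\lambda_1,\dots,\lambda_n)}$ under each face map of Definition~\ref{dfn:gpd homology}, using that $(d_i)_*$ counts preimages and that each $d_i$ restricts to an injection on the bisection. For the inner faces $1\le i\le n-1$, the groupoid product $g_ig_{i+1}$ realises the $k$-graph composition $\lambda_i\lambda_{i+1}$, so $(d_i)_*1_{Y(\lambda_1,\dots,\lambda_n)} = 1_{Y(\lambda_1,\dots,\lambda_i\lambda_{i+1},\dots,\lambda_n)} = \Psi_{n-1}\bigl(d_i(\lambda_1,\dots,\lambda_n)\bigr)$, matching the corresponding categorical face termwise. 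The face that deletes the source-most entry matches directly as well, because the source map carries the bisection onto the full cylinder $Z(s(\lambda_n))$, which is exactly the support of the target generator. The plan is then to assemble the signed sum $\partial_n^{\G_\Lambda}\Psi_n=\sum_i(-1)^i(d_i)_*\Psi_n$ and compare it with $\Psi_{n-1}\partial_n^\Lambda$.

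The delicate point, which I expect to be the main obstacle, is the behaviour of the range-most (extremal) faces. Deleting the range-most groupoid element re-bases the free parameter, and the range map sends the bisection $Z(\lambda_i,s(\lambda_i))$ onto the cylinder $Z(\lambda_i)$, a \emph{proper} subset of $Z(r(\lambda_i))=Z(s(\lambda_{i-1}))$; consequently this pushforward does not agree termwise with $\Psi_{n-1}$ of the matching categorical face. The heart of the argument is thus to show that, once the full alternating sum is formed, these range-type contributions are exactly absorbed using the factorisation property in the guise $Z(v)=\bigsqcup_{\mu\in v\Lambda^{m}}Z(\mu)$, which rewrites the larger cylinder as a disjoint union of the smaller ones indexed by the extensions of a vertex. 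I would carry this out by isolating the extremal terms, substituting the factorisation decomposition, and verifying that the telescoping reproduces precisely $\Psi_{n-1}\partial_n^\Lambda$; this is the step requiring the most careful bookkeeping, and where the precise form of the categorical boundary and the $k$-graph factorisation rules are genuinely used.
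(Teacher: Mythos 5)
Your reduction to generators, the parametrisation of $Y(\lambda_1,\dots,\lambda_n)$ by $x \in Z(s(\lambda_n))$ with $i$th entry $(\lambda_i\cdots\lambda_n x,\, d(\lambda_i),\, \lambda_{i+1}\cdots\lambda_n x)$, and the termwise matching of the inner faces all agree with the paper's argument and are correct. You diverge from the paper at the extremal faces. Note first that your labels are crossed: the face $d_0$, which deletes the range-most entry $g_1$, carries $Y(\lambda_1,\dots,\lambda_n)$ \emph{bijectively} onto $Y(\lambda_2,\dots,\lambda_n)$ (both are parametrised by the same $x$) and so matches termwise; it is the face $d_n$, deleting the source-most entry $g_n$ --- for $n=1$ this is $r_*$ --- whose image is only the part of $Y(\lambda_1,\dots,\lambda_{n-1})$ cut out by requiring the parameter to lie in $Z(\lambda_n)$. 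But your underlying diagnosis is right: $r_*(1_{Z(\lambda,s(\lambda))}) = 1_{Z(\lambda)}$, which is a proper sub-indicator of $1_{Z(r(\lambda))} = \Psi_0(r(\lambda))$ whenever $|r(\lambda)\Lambda^{d(\lambda)}| \ge 2$, so one extremal face genuinely fails to match termwise. (The paper's own proof asserts the contrary, computing $r_*(1_{Y(\lambda)}) = 1_{Z(r(\lambda))}$ and declaring the $i=n$ case ``very similar'' to $i=0$; that is precisely the step you were right to distrust.)

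The genuine gap is your final step: the claim that the mismatch is absorbed by telescoping in the full alternating sum. It is not, and degree one already kills it: $\partial_1\Psi_1(\lambda) - \Psi_0\partial_1(\lambda) = 1_{Z(r(\lambda))} - 1_{Z(\lambda)} = \sum_{\mu \in r(\lambda)\Lambda^{d(\lambda)},\, \mu \ne \lambda} 1_{Z(\mu)}$, a sum of indicators of nonempty pairwise disjoint compact open sets, hence nonzero; only the single generator $\lambda$ occurs, so there is nothing to telescope against, and the same isolated defect appears at the $i=n$ face in every degree. The failure even survives to homology: for the $1$-graph with one vertex $v$ and two edges $e,f$, every $1$-chain is a cycle since $\partial_1(e) = v - v = 0$, yet $\partial_1(\Psi_1(e)) = 1_{Z(v)} - 1_{Z(e)} = 1_{Z(f)} \ne 0$. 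So $\Psi_1$ does not carry cycles to cycles, and no regrouping of the chain-level sums can produce the induced map on homology; any repair has to modify the comparison map $\Psi$ itself (or the complexes), not merely re-sum the faces.
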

\begin{proof}
It suffices to prove that $\Psi_*$ intertwines the boundary maps on generators of
${C}_*(\Lambda)$. Fix $\lambda \in \Lambda = \Lambda^{*\,1} \subseteq {C}_1(\Lambda)$.
Then
\begin{align*}
\partial_1(\Psi_1(\lambda)) &= \partial_1(1_{Y(\lambda)}) =  s_*(1_{Y(\lambda)}) - r_*(1_{Y(\lambda)})
     = 1_{Z(s(\lambda))} - 1_{Z(r(\lambda))}\\
     &= \Psi_0(s(\lambda)) - \Psi_0(r(\lambda)) = \Psi_0(\partial_1(\lambda)).
\end{align*}
For $n \ge 2$, it suffices to prove that given an element
$(\lambda_1, \dots, \lambda_n) \in \Lambda^{*\,n}$ and any $0 \le i \le n$,
we have
\begin{equation}\label{eq:commutation}
(d_i)_*(\Psi_n(\lambda_1, \dots, \lambda_n))
    = \Psi_n(d_i(\lambda_1, \dots, \lambda_n)).
\end{equation}
In the following calculation, given sets $Z_1, \dots, Z_n \subseteq \G_\Lambda$ we define
 \[
 Z_1 * Z_2 * \cdots * Z_n := (Z_1 \times Z_2 \times \cdots \times Z_n) \cap \G_\Lambda^{(n)}.
 \]
Note that $Y(\lambda_1, \dots, \lambda_n) = Z(\lambda_1, s(\lambda_1)) * Z(\lambda_2,
s(\lambda_2)) * \cdots * Z(\lambda_n, s(\lambda_n))$. First suppose that $1 \le i \le
n-1$. Then
\begin{align*}
(d_i)_*(\Psi_n(\lambda_1, \dots, \lambda_n))
    &=  (d_i)_*\big(1_{Y(\lambda_1, \dots, \lambda_n)}\big)
      = (d_i)_*\big(1_{Z(\lambda_1, s(\lambda_1)) * Z(\lambda_2, s(\lambda_2)) * \cdots * Z(\lambda_n, s(\lambda_n)})\big)\\
    &= 1_{Z(\lambda_1, s(\lambda_1)) * \cdots * Z(\lambda_i, s(\lambda_i))Z(\lambda_{i+1}, s(\lambda_{i+1})) *\cdots * Z(\lambda_n, s(\lambda_n))},
\end{align*}
and
\begin{align*}
\Psi_n(d_i(\lambda_1, \dots, \lambda_n))
    &= \Psi_n(\lambda_1, \dots, \lambda_i\lambda_{i+1}, \dots, \lambda_n) =  1_{Y(\lambda_1, \dots, \lambda_i\lambda_{i+1}, \dots, \lambda_n)}\\
    &= 1_{Z(\lambda_1, s(\lambda_1)) * \cdots * Z(\lambda_i\lambda_{i+1}, s(\lambda_{i+1})) *\cdots * Z(\lambda_n, s(\lambda_n))}.
\end{align*}
Since the cylinder sets $Z(\lambda_i, s(\lambda_i))$ and $Z(\lambda_{i+1},
s(\lambda_{i+1}))$ are both bisections and since $s(Z(\lambda_i, s(\lambda_i))) =
Z(s(\lambda_i)) = r(Z(\lambda_{i+1}, s(\lambda_{i+1})))$, we have $Z(\lambda_i,
s(\lambda_i))Z(\lambda_{i+1}, s(\lambda_{i+1})) = Z(\lambda_i\lambda_{i+1},
s(\lambda_{i+1}))$, and~\eqref{eq:commutation} follows.

Now consider $i = 0$ (the case $i = n$ is very similar).  We have
\begin{align*}
(d_0)_*(\Psi_n(\lambda_1, \dots, \lambda_n))
    &= (d_0)_*\big(1_{Y(\lambda_1, \dots, \lambda_n)}\big)\\
    &= (d_0)_*\big(Z(\lambda_1, s(\lambda_1)) * Z(\lambda_2, s(\lambda_2)) * \cdots * Z(\lambda_n, s(\lambda_n))\big)\\
    &= 1_{Z(\lambda_2, s(\lambda_2)) * \cdots * Z(\lambda_n, s(\lambda_n))} \\
    &= 1_{Y(\lambda_2, \cdots, \lambda_n)}\\
    &= \Psi_n(\lambda_2, \cdots, \lambda_n)\\
    &= \Psi_n(d_0(\lambda_1, \dots, \lambda_n)). \qedhere
\end{align*}
\end{proof}

In general the map $\Psi_*$ is neither injective nor surjective: see
Remark~\ref{rmk:nolink}.

\subsection{The HK conjecture for 1-graph groupoids and 2-graph groupoids}

In this subsection we apply the results of \S \ref{sec:DR groupoids} to groupoids associated to
$1$-graphs and $2$-graphs.

Recall from \S \ref{s:definitions} that if $\Lambda$ is a $k$-graph, then there is an
action $\sigma$ of $\N^k$ by endomorphisms on its infinite-path space $\Lambda^\infty$
and that the $k$-graph $C^*$-algebra coincides with the $C^*$-algebra
$C^*(\G(\Lambda^\infty, \sigma))$ of the associated Deaconu--Renault groupoid (see
\cite{KP2000}). We begin this section by showing that the homology of $\G(\Lambda^\infty,
\sigma)$ as computed in Theorem~\ref{thm:main H* computation} coincides with the homology
of the complex $D^\Lambda_*$ used by Evans \cite{evans} to compute the $K$-theory of
$C^*(\Lambda)$.

The complex $(D_*^\Lambda, \partial_*)$ is given as follows. We write $\varepsilon_1,
\dots , \varepsilon_k$ for the generators of $\Z^k$, and we write $\varepsilon_v$ for the
generators of $\Z{\Lambda^0}$. We write $M_j \in M_{\Lambda^0}(\Z)$ for the vertex matrix
given by $M_j(v,w) = |v \Lambda^{e_j} w|$, which we regard as an endomorphism of
$\Z{\Lambda^0}$. For $p \ge 0$ we define $D^\Lambda_p = \bigwedge^p \Z^k \otimes
\Z{\Lambda^0}$ and we define $D^\Lambda_0 = \Z \Lambda^0$, and $D^\Lambda_p = \{0\}$ for
$p > k$. For $p \ge 1$ we define $\partial_p : D^\Lambda_p \to D^\Lambda_{p-1}$ by
$\partial_p = 0$ if $p > k$,
\[
\partial_p(\varepsilon_{i_1} \wedge \dots \wedge \varepsilon_{i_p} \otimes \varepsilon_v)
    = \sum_{j=0}^p (-1)^{j+1} \varepsilon_{i_1} \wedge \dots \wedge \widehat{\varepsilon}_{i_j}
        \wedge \dots \wedge \varepsilon_{i_p} \otimes (I - M^t_{i_j})\varepsilon_v
\]
if $2 \le p \le k$ and
\[
\partial_1(\varepsilon_i \otimes \varepsilon_v) = (I - M^t_i)\varepsilon_v.
\]

In the following proposition, we establish an isomorphism between the homology of Evans'
complex $D^\Lambda_*$ and the homology of the complex $A^\sigma_*$ associated to the
shift maps $\sigma^n$ on $\Lambda^\infty$. We could obtain the existence of isomorphisms
$H_*(D^\Lambda_*) \cong H_*(A^\sigma_*)$ using that, by Evans' results, $H_*(D^\Lambda_*)
\cong H_*(\Z^k, K_0(C^*(\Lambda \times_d \Z^k)))$, that by Matui's results,
$H_*(A^\sigma_*) \cong H_*(\Z^k, H_0(\G_\Lambda \times_c \Z^k))$, and then by identifying
$C^*(\Lambda \times_d \Z^k)$ with $C^*(\G_\Lambda \times_c \Z^k)$ and applying the HK
conjecture for AF groupoids as stated in Corollary~\ref{cor:lim homology}. However, we
have chosen to present a more direct proof, which also has the advantage that it shows
that the natural inclusion $\Z\Lambda^0 \hookrightarrow C_c(\Lambda^\infty, \Z)$ induces
the isomorphism.

\begin{proposition}\label{prp:homology for k-graphs}
Let $\Lambda$ be a row-finite $k$-graph with no sources. Let $\G_\Lambda =
\G(\Lambda^\infty, \sigma)$ be the associated groupoid, and let $(A^\sigma_*)$ be the
complex of Theorem~\ref{thm:main H* computation}. Then the homomorphism $\iota :
\Z\Lambda^0 \to C_c(\Lambda^\infty, \Z)$ determined by $\iota(1_v) = 1_{Z(v)}$ induces an
isomorphism $H_*(A^\sigma_*) \to H_*(D^\Lambda_*)$. In particular,
$H_*(\G(\Lambda^\infty, \sigma)) \cong H_*(D^\Lambda_*)$.
\end{proposition}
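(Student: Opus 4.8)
The plan is to show that $\iota$, extended to $\id \otimes \iota$ on each $\bigwedge^p \Z^k \otimes \Z\Lambda^0$, is a chain map $D^\Lambda_* \to A^\sigma_*$ which is a quasi-isomorphism; the displayed isomorphism is then its inverse, and the final assertion is immediate from Theorem~\ref{thm:main H* computation}, which identifies $H_*(\G(\Lambda^\infty,\sigma))$ with $H_*(A^\sigma_*)$. First I would record the single algebraic identity that drives everything: for each $j$,
\[
\sigma^{e_j}_* \circ \iota = \iota \circ M_j^t \qquad\text{as maps } \Z\Lambda^0 \to C_c(\Lambda^\infty,\Z).
\]
This is a direct computation: $\sigma^{e_j}_*(1_{Z(v)})(x) = |\{y \in Z(v) : \sigma^{e_j}(y) = x\}| = |v\Lambda^{e_j} r(x)| = M_j(v, r(x))$, so $\sigma^{e_j}_*(1_{Z(v)}) = \sum_w M_j(v,w) 1_{Z(w)} = \iota(M_j^t 1_v)$. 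Since the boundary maps of $D^\Lambda_*$ and $A^\sigma_*$ are assembled (as in Lemma~\ref{lem:homology reduction}) from $\id - M_{i}^t$ and $\id - \sigma^{e_i}_*$ respectively, this identity shows at once that $\id \otimes \iota$ intertwines $\partial_*$, i.e.\ is a chain map.

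Next I would pass to direct limits. Write $\widetilde{B} := \varinjlim_{\N^k}(\Z\Lambda^0, (M^t)^n)$ and $\widetilde{A} := \varinjlim_{\N^k}(C_c(\Lambda^\infty,\Z), \sigma^n_*)$; the $M_i^t$ commute because $\Lambda$ is a $k$-graph, and each is a well-defined endomorphism by row-finiteness. Lemma~\ref{lem:homology with limits} applies to both complexes and yields isomorphisms $H_*(D^\Lambda_*) \cong H_*(\Z^k, \widetilde{B})$ and $H_*(A^\sigma_*) \cong H_*(\Z^k, \widetilde{A})$, induced by the structure maps into the $0$th copies. The intertwining identity shows that $\iota$ is a morphism of direct systems, so it induces a $\Z^k$-equivariant homomorphism $\widetilde{\iota} : \widetilde{B} \to \widetilde{A}$, and the heart of the proof is to show $\widetilde{\iota}$ is an isomorphism. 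Injectivity is easy: if $\widetilde{\iota}[\xi]_m = 0$ then $\sigma^{n}_*(\iota\xi) = \iota((M^t)^{n}\xi) = 0$ for some $n \in \N^k$, and since the $1_{Z(w)}$ have pairwise disjoint supports $\iota$ is injective on $\Z\Lambda^0$, whence $(M^t)^n\xi = 0$ and $[\xi]_m = 0$. For surjectivity, note $C_c(\Lambda^\infty,\Z)$ is spanned by the $1_{Z(\lambda)}$, and refining degrees one may write any $f$ as a $\Z$-combination $\sum_\nu c_\nu 1_{Z(\nu)}$ with all $\nu$ of a common degree $p$; since $\sigma^p$ restricts to a homeomorphism $Z(\nu) \to Z(s(\nu))$ we get $\sigma^p_*(1_{Z(\nu)}) = 1_{Z(s(\nu))} = \iota(1_{s(\nu)})$, so in $\widetilde A$ the class of $f$ equals that of $\sigma^p_* f = \iota(\sum_\nu c_\nu 1_{s(\nu)})$, which lies in the image of $\widetilde\iota$.

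Finally I would assemble these into a commuting ladder: the square relating $\id\otimes\iota$ to $\id \otimes \widetilde{\iota}$ commutes because $\widetilde{\iota}\circ\sigma^{0,\infty}_B = \sigma^{0,\infty}_A\circ\iota$, and its horizontal arrows are the isomorphisms of Lemma~\ref{lem:homology with limits}. Since $\widetilde{\iota}$ is a $\Z^k$-module isomorphism it induces an isomorphism $H_*(\Z^k, \widetilde{B}) \cong H_*(\Z^k, \widetilde{A})$; commutativity of the ladder then forces the map $H_*(D^\Lambda_*) \to H_*(A^\sigma_*)$ induced by $\iota$ to be an isomorphism, so the claimed isomorphism $H_*(A^\sigma_*) \to H_*(D^\Lambda_*)$ is its inverse. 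Combining with $H_*(\G(\Lambda^\infty,\sigma)) \cong H_*(A^\sigma_*)$ from Theorem~\ref{thm:main H* computation} gives the last sentence. I expect the main obstacle to be the surjectivity of $\widetilde{\iota}$ — the degree-refinement step together with the identity $\sigma^p_*(1_{Z(\nu)}) = 1_{Z(s(\nu))}$ — and, more subtly, keeping the naturality bookkeeping honest so that the resulting isomorphism is genuinely \emph{induced by $\iota$} rather than merely an abstract coincidence of the two homology groups.
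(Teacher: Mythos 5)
Your proof is correct, and it reaches the conclusion by a genuinely more self-contained route than the paper's. Both arguments share the same skeleton: the intertwining identity $\sigma^{e_j}_*\circ\iota = \iota\circ M_j^t$ (which is exactly the paper's equation~\eqref{eq:sigma<->M}), the observation that $\id\otimes\iota$ is therefore a chain map, and the passage to direct limits via Lemma~\ref{lem:homology with limits}. Where you diverge is in how the induced map between the two limits is shown to be an isomorphism. The paper identifies $\varinjlim(\Z\Lambda^0,(M^t)^n)$ with $K_0(C^*(\Lambda\times_d\Z^k))$ by citing Evans' Theorem~3.14 of \cite{evans}, identifies $\varinjlim(C_c(\Lambda^\infty,\Z),\sigma^n_*)$ with $H_0(c^{-1}(0))$ via Corollary~\ref{cor:lim homology}, and then splices these together using the isomorphism $C^*(\Lambda\times_d\Z^k)\cong C^*(\G_\Lambda\times_c\Z^k)$ from \cite{KP2000} and Lemma~\ref{lem:skew gpd homology}, finally checking that the induced map on the common limit fixes every generator $[1_{Z(\mu)}]$ and so is the identity. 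You instead prove directly, at the level of the two direct systems of abelian groups, that $\widetilde\iota$ is injective (because the $1_{Z(w)}$ have disjoint supports) and surjective (because every $f\in C_c(\Lambda^\infty,\Z)$ refines to a combination of cylinders of a common degree $p$, and $\sigma^p_*$ pushes such a combination into the image of $\iota$). Your argument avoids Evans' theorem and the $C^*$-algebraic detour entirely, which makes it more elementary; the trade-off is that the paper's version simultaneously records the compatibility with $K_0(C^*(\Lambda\times_d\Z^k))$, which it exploits elsewhere (e.g.\ in Corollary~\ref{cor:HK for graphs}). One cosmetic point: as you note, the chain map induced by $\iota$ runs $H_*(D^\Lambda_*)\to H_*(A^\sigma_*)$, so the isomorphism as literally displayed in the statement is its inverse; the paper's own proof concludes with the map in the same direction you obtain.
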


\begin{proof}
Corollary~\ref{cor:lim homology} shows that we can express the complex $\bigwedge^* \Z^k
\otimes H_0(c^{-1}(0))$ as the direct limit of the $A^\sigma_*$ under the maps induced by
the $\sigma_*$. Lemma~\ref{lem:homology with limits} shows that the inclusion of
$A^\sigma_*$ in $\bigwedge^* \Z^k \otimes H_0(c^{-1}(0))$ induces an isomorphism in
homology. Similarly, in \cite[Theorem 3.14]{evans} Evans proves that one can express the complex $\bigwedge^* \Z^k
\otimes K_0(C^*(\Lambda \times_d \Z^k))$ as the direct limit of $D^\Lambda_*$ under the
maps induced by the $M^t_*$, and the inclusion of $D^\Lambda_*$ that takes $1_v$ to the class
of $p_{v,0}$ in the direct limit induces an isomorphism in homology. By
\cite[Theorem 5.2]{KP2000}, there is an isomorphism $C^*(\Lambda \times_d \Z^k) \cong
C^*(\G(\Lambda^\infty, \sigma) \times_c \Z^k)$ that carries $p_{(v,0)}$ to $1_{Z(v)
\times \{0\}}$. So Corollary~\ref{cor:lim homology} shows that there is an isomorphism
$K_0(C^*(\Lambda \times_d \Z^k)) \cong H_0(\G(\Lambda^\infty, \sigma) \times_c \Z^k)$
that takes $[p_{(v,0)}]$ to $[1_{Z(v) \times \{0\}}]$. Lemma~\ref{lem:skew gpd homology}
therefore implies that there is an isomorphism $K_0(C^*(\Lambda \times_d \Z^k)) \cong
H_0(c^{-1}(0))$ induced by the map that carries the class of $p_{v,0}$ to $1_{Z(v)}$.

Given $v \in \Lambda^0$ and $n \in \N^k$, we have
\begin{equation}\label{eq:sigma<->M}
\sigma^n_*(\iota(1_v)) = \sigma^n_*(1_{Z(v)})
    = \sum_{\lambda \in v\Lambda^n} \sigma^n_*(1_{Z(\lambda)})
    = \sum_{\lambda \in v\Lambda^n} 1_{Z(s(\lambda))}
    = \iota(M^t_n(1_v)).
\end{equation}
Hence $\iota$ induces a map of complexes $\iota_* : D^\Lambda_* \to A^\sigma_*$ that
intertwines the $\sigma^n_*$ with the $(M_n^t)_*$. The same computation combined with the
universal property of the direct limit shows that we obtain the following commuting
diagram.
\[
\begin{tikzpicture}
    \node (t1) at (0, 2) {$D^\Lambda_*$};
    \node (t2) at (3, 2) {$D^\Lambda_*$};
    \node (t3) at (6, 2) {$\dots$};
    \node (t4) at (9, 2) {$\bigwedge^* \Z^k \otimes H_0(c^{-1}(0))$};
    \node (b1) at (0, 0) {$A^\sigma_*$};
    \node (b2) at (3, 0) {$A^\sigma_*$};
    \node (b3) at (6, 0) {$\dots$};
    \node (b4) at (9, 0) {$\bigwedge^* \Z^k \otimes H_0(c^{-1}(0))$};
    \draw[->] (t1)--(b1) node[pos=0.5, right] {$\iota_*$};
    \draw[->] (t2)--(b2) node[pos=0.5, right] {$\iota_*$};
    \draw[->] (t4)--(b4) node[pos=0.5, right] {$\iota_*^\infty$};
    \draw[->](t1)--(t2) node[pos=0.5, above] {$M^t_*$};
    \draw[->](b1)--(b2) node[pos=0.5, above]{$\sigma^n_*$};
    \draw[->](t2)--(t3);
    \draw[->](b2)--(b3);
    \draw[->](t3)--(t4);
    \draw[->](b3)--(b4);
\end{tikzpicture}
\]
By definition, $\iota_*([1_{v}]) = [1_{Z(v)}]$ for every $v \in \Lambda^0$. Write $(
M^t_{n,\infty} )_*$ for the map from the $n$th copy of $D^\Lambda_*$ into $\bigwedge^*
\Z^k \otimes  H_* ( c^{-1} (0) )$ and write $\sigma^{n,\infty}_*$ for the map from the
$n$th copy of $A^\sigma_*$ to $\bigwedge^* \Z^k \otimes  H_* ( c^{-1} (0) )$. Then $(
M^t_{n,\infty} )_* ( 1_v ) = [ 1_{Z(\mu)} ]$ for any $\mu \in \Lambda^n v$. Since
\[
\sigma^{n,\infty}_* ( \iota_* ( 1_v ) ) = \sigma^{n,\infty}_* ( 1_{Z(v)} ) = \sigma^{n,\infty}_* ( \sigma^n_* ( 1_{Z(\mu)} ) )
= \sigma^{n,\infty}_* ( 1_{Z(\mu)} ) = [ 1_{Z(\mu)} ],
\]
we see by commutativity of the diagram that $\iota^\infty_* ( 1_{Z(\mu)} ) = [ 1_{Z(\mu)}
]$ for all $\mu$. Since the $1_{Z(\mu)}$ generate $C_c(\Lambda^\infty, \Z)$, we deduce
that $\iota^\infty_*$ is the identity map, and therefore induces the identity map in
homology. Since the maps $H_*(D^\Lambda_*) \to H_*(\bigwedge^* \Z^k \otimes
H_0(c^{-1}(0)))$ and $H_*(A^\sigma_*) \to H_*(\bigwedge^* \Z^k \otimes H_0(c^{-1}(0)))$
are isomorphisms, and the diagram above commutes, the functoriality of homology implies
that $\iota_*$ induces an isomorphism $H_*(D^\Lambda_*) \to H_*(A^\sigma_*)$.

The final statement follows from Theorem~\ref{thm:main H* computation}.
\end{proof}

Though we already know that graph groupoids belong to $\M$ by Remark~\ref{rmk:1graphHK},
the following result goes a step further, computing the homology of the $1$-graph and
$2$-graph groupoids in terms of the vertex matrices of the $1$-graph or $2$-graph. Recall
that given a $k$-graph $\Lambda$ and given $i \le k$ we write $M_i$ for the $\Lambda^0
\times \Lambda^0$ integer matrix given by $M_i(v,w) = |v \Lambda^{e_i} w|$. If $\Lambda$
is the path category of a directed graph $E$, then $M_1$ is just the usual adjacency
matrix $A_E$ of $E$.

\begin{corollary}[{see \cite[Proposition~3.16]{evans}}]\label{cor:HK for graphs}
\begin{enumerate}
\item Let $E$ be a row-finite graph with no sources. Then
\begin{align*}
K_0 ( C^* (E) ) \cong H_0(\G_E) &\cong \coker(I - A^t_E)\quad\text{ and }\\
K_1 ( C^* (E) ) \cong H_1(\G_E)  &\cong \ker(I - A^t_E).
\end{align*}
\item Let $\Lambda$ be a row-finite $2$-graph with no sources. Then
\begin{align*}
K_0(C^*(\Lambda)) \cong H_0(\G_\Lambda) &\cong \Z\Lambda^0/\big((I - M_1^t, I-M_2^t )(\Z\Lambda^0)^2\big)
\oplus \ker(M_2^t - I) \cap \ker(I - M_1^t),\\
\intertext{and}
K_1(C^*(\Lambda)) \cong H_1(\G_\Lambda) &\cong
\ker(I - M_1^t, I - M_2^t )/\big(\begin{smallmatrix}(M_2^t - I) \\ (I - M_1^t)\end{smallmatrix}\big)\Z\Lambda^0.
\end{align*}
\end{enumerate}
In particular, graph groupoids and $2$-graph groupoids belong to $\M$.
\end{corollary}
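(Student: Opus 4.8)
The plan is to derive both parts by feeding the combinatorial data of the graph (resp.\ $2$-graph) into the abstract $K$-theory and homology computations for rank-1 and rank-2 Deaconu--Renault groupoids. Recall that the path groupoid $\G_\Lambda = \G(\Lambda^\infty, \sigma)$ of a row-finite $k$-graph with no sources is precisely a rank-$k$ Deaconu--Renault groupoid, and that $C^*(\Lambda) \cong C^*(\G_\Lambda)$. Consequently the membership assertions ``$\G_E \in \M$'' and ``$\G_\Lambda \in \M$'' are immediate from Theorem~\ref{thm:DR k=1} (for $k=1$) and Theorem~\ref{thm:DR k=2} (for $k=2$), and it remains only to translate the groups appearing there into the matrix expressions in the statement. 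In both parts the mechanism of the translation is the same: Proposition~\ref{prp:homology for k-graphs} identifies $H_*(\G_\Lambda)$ with the homology of Evans' complex $D^\Lambda_*$, whose boundary maps are written directly in terms of the vertex matrices $M_i$.

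For part~(1), I would first invoke Theorem~\ref{thm:DR k=1} to obtain $K_0(C^*(\G_E)) \cong H_0(\G_E) \cong \coker(\id - \sigma_*)$ and $K_1(C^*(\G_E)) \cong H_1(\G_E) \cong \ker(\id - \sigma_*)$, with $H_n(\G_E) = 0$ for $n \ge 2$. Then Proposition~\ref{prp:homology for k-graphs} replaces $H_*(\G_E)$ by $H_*(D^E_*)$. For a $1$-graph this complex collapses to $0 \to \Z\Lambda^0 \xrightarrow{\,I - M_1^t\,} \Z\Lambda^0 \to 0$, and since $M_1 = A_E$ is the adjacency matrix its homology is $\coker(I - A_E^t)$ in degree $0$ and $\ker(I - A_E^t)$ in degree $1$. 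Combining these identifications (and using $C^*(E) \cong C^*(\G_E)$) yields the stated formulas.

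For part~(2), I would apply Theorem~\ref{thm:DR k=2}, which gives $K_0(C^*(\G_\Lambda)) \cong H_0(\G_\Lambda) \oplus H_2(\G_\Lambda) \cong \coker(d_1) \oplus \ker(d_2)$ and $K_1(C^*(\G_\Lambda)) \cong H_1(\G_\Lambda) \cong \ker(d_1)/\operatorname{image}(d_2)$. Again Proposition~\ref{prp:homology for k-graphs} lets me compute these via $D^\Lambda_*$. The remaining work is bookkeeping: using the identifications $\bigwedge^j \Z^2 \otimes \Z\Lambda^0 \cong (\Z\Lambda^0)^{\binom{2}{j}}$ for $0 \le j \le 2$ and the correspondence $\sigma_*^{e_i} \leftrightarrow M_i^t$ recorded in~\eqref{eq:sigma<->M}, I would write out $\partial_1(\varepsilon_i \otimes \varepsilon_v) = (I - M_i^t)\varepsilon_v$ and $\partial_2(\varepsilon_1 \wedge \varepsilon_2 \otimes \varepsilon_v) = -\varepsilon_1 \otimes (I - M_2^t)\varepsilon_v + \varepsilon_2 \otimes (I - M_1^t)\varepsilon_v$, and verify that under these identifications they coincide with the maps $d_1$ and $d_2$ of Theorem~\ref{thm:DR k=2}. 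Reading off kernels and cokernels then produces exactly the matrix expressions in the statement.

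I expect no genuine obstacle; the one place demanding care is the sign-and-indexing verification that $\partial_2$ of $D^\Lambda_*$ matches $d_2$ --- in particular that the component of $\partial_2$ landing in the $\varepsilon_1$-summand acquires the sign making it $(M_2^t - I)$ rather than $(I - M_2^t)$, so that $\ker(d_2) = \ker(M_2^t - I) \cap \ker(I - M_1^t)$ and $\operatorname{image}(d_2)$ is generated by the column $\big(\begin{smallmatrix}(M_2^t - I) \\ (I - M_1^t)\end{smallmatrix}\big)$ as in the statement. Once this identification of complexes is in place, every group in the corollary is literally a homology group of $D^\Lambda_*$, and the result follows.
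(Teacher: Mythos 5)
Your proposal is correct and follows essentially the same route as the paper: invoke Theorems~\ref{thm:DR k=1} and~\ref{thm:DR k=2} for the isomorphisms between $K$-theory and homology, then use Proposition~\ref{prp:homology for k-graphs} together with the definition of Evans' complex $D^\Lambda_*$ to read off the matrix descriptions. Your sign check on $\partial_2$ (that the $\varepsilon_1$-component is $(M_2^t - I)$) is consistent with the formula $d_2(f) = ((\sigma_*^{e_2}-\id)f, (\id - \sigma_*^{e_1})f)$ in Theorem~\ref{thm:DR k=2}, so no issues arise.
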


\begin{proof}
Theorems \ref{thm:DR k=1}~and~\ref{thm:DR k=2} establish the isomorphisms between
homology and $K$-theory. The descriptions of the homology groups follow from
Proposition~\ref{prp:homology for k-graphs} and the definition of the complex
$D^\Lambda_*$.
\end{proof}

\begin{remark} \label{rmk:nolink}
The strongly connected $1$-graph $\Lambda$ described in Example~\ref{ex:1graphom} has homology given by
\[
H_n(\Lambda) =
\begin{cases}
 \Z & \text{if  } n = 0, \\
 \Z^{11} & \text{if  } n = 1, \\
  0 & \text{otherwise.}
\end{cases}
\]
By Corollary~\ref{cor:HK for graphs} the homology of $\G_\Lambda$ is
\[
H_n(\G_\Lambda) =
\begin{cases}
\Z/2\Z \oplus \Z / 2\Z & \text{if $n = 0$,} \\
  0 & \text{otherwise.}
\end{cases}
\]
So the map  $\Psi_0 : H_0(\Lambda) \to H_0(\G_\Lambda)$ of
Theorem~\ref{them-commuting-w-boundaries} is neither surjective nor injective.
\end{remark}

\begin{remark}
In \cite[Proposition~3.18]{evans}, Evans shows that if $\Lambda$ is a $3$-graph and $\{(I
- M_i^t) a : 1 \le i \le 3, a \in \Z\Lambda^0\}$ generates $\Z\Lambda^0$, then the
$K$-theory of $C^*(\Lambda)$ is equal to the homology of $D^\Lambda$. So the groupoids of
such $k$-graphs belong to $\M$.

We also see that if $\Lambda$ is a 3-graph or a 4-graph for which the page 3
differentials in Kasparov's sequence are zero and $H_2(D^\Lambda)$ (and $H_3(D^\Lambda)$
in the case of a 4-graph) are free abelian, then $K_*(C^*(\Lambda))$ is determined by
$H_*(D^\Lambda)$ and so $\G_\Lambda$ belongs to $\M$; but of course the hypothesis on the
differential maps in Kasparov's sequence are not checkable in practice.
\end{remark}

\begin{remark}
Suppose that $\Lambda$ is a finite 3-graph. Then the homology groups $H_*(D^\Lambda_*)$
are finite rank, and $H_3(D^\Lambda_*)$ is free abelian. Consequently, if it were
possible to construct an example of a finite 3-graph for which the page 3 differential
$d^3_{3,0}$ in Kasparov's spectral sequence was nontrivial, then consideration of the
ranks of the groups involved would show that the associated groupoid did not satisfy the
HK conjecture, even up to stabilisation by $\Q$.
\end{remark}

\subsection{One vertex \texorpdfstring{$k$}{k}-graphs}

In this section we will show that if $\Lambda$ is a $1$-vertex $k$-graph in which each
$|\Lambda^{e_i}| \ge 2$ and in which $\gcd(|\Lambda^{e_1}|-1, \dots , |\Lambda^{e_k}|-1)
= 1$, then $K_*(C^*(\Lambda)) = H_*( \G_\Lambda) = 0$. We work with row-finite $k$-graphs
throughout, but we include a comment at the end of the section indicating how to extend
our $K$-theory calculation to non-row-finite $k$-graphs. A similar result has been proved
in \cite[Theorem 6.4(a)]{baomsta} under the assumption that the elements of $\{
|\Lambda^{e_i}| : 1 \le i \le k \}$ are pairwise relatively prime.

The key point is the following consequence of Matui's K\"unneth formula for the groupoid
homology of an ample Hausdorff groupoid (see \cite[Theorem 2.4]{Matui-2016}).

\begin{theorem}\label{thm:H computation}
Let $\Lambda$ be a row-finite single-vertex $k$-graph with at least two edges of each colour,
and write $N_i := |\Lambda^{e_i}| - 1$ for each $i \le k$. Then
\[
H_n(\G_\Lambda) \cong \begin{cases}
    (\Z_{\gcd(N_1, \dots, N_k)})^{\binom{k-1}{n}} &\text{ if $0 \le n \le k-1$}\\
    0 &\text{ otherwise.}
    \end{cases}
\]
\end{theorem}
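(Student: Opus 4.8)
The plan is to reduce the computation to the homology of Evans' complex $D^\Lambda_*$ via Proposition~\ref{prp:homology for k-graphs}, exploit the fact that for a single-vertex $k$-graph this complex decomposes as a tensor product of one-colour complexes, and then apply Matui's K\"unneth formula \cite[Theorem~2.4]{Matui-2016} inductively. First I would record that, since $\Lambda$ has a single vertex, $\Z\Lambda^0 \cong \Z$ and each vertex matrix $M_i$ is the scalar $N_i + 1$, so that $I - M_i^t$ acts on $\Z$ as multiplication by $-N_i$; thus $D^\Lambda_* = \bigwedge^* \Z^k$ carries the Koszul differential determined by $-N_1, \dots, -N_k$, and hence $D^\Lambda_* \cong D^{\Lambda_1}_* \otimes_\Z \cdots \otimes_\Z D^{\Lambda_k}_*$, where $D^{\Lambda_i}_* = [\Z \xrightarrow{-N_i} \Z]$ is the complex of the one-vertex one-colour graph $\Lambda_i$ with $N_i + 1$ edges. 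Since $H_*(\G_\Lambda)$ depends only on the edge counts (Proposition~\ref{prp:homology for k-graphs}), this reflects the fact that we may assume $\Lambda$ is a cartesian-product $k$-graph $\Lambda_1 \times \cdots \times \Lambda_k$, so that $\G_\Lambda \cong \G_{\Lambda_1} \times \cdots \times \G_{\Lambda_k}$. By Theorem~\ref{thm:DR k=1} (see also Corollary~\ref{cor:HK for graphs}(1)) each factor has $H_0(\G_{\Lambda_i}) \cong \coker(-N_i) \cong \Z_{N_i}$ and $H_1(\G_{\Lambda_i}) \cong \ker(-N_i)$; here I use the hypothesis of at least two edges of each colour, i.e.\ $N_i \ge 1$, to conclude that this kernel vanishes, so that $H_*(\G_{\Lambda_i})$ is concentrated in degree $0$.

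Next I would run an induction on the number of factors using Matui's K\"unneth formula---equivalently, the algebraic K\"unneth theorem applied to the tensor-product complex $D^\Lambda_* \cong \bigotimes_i D^{\Lambda_i}_*$---which yields a (non-naturally) split short exact sequence expressing $H_n(\G \times \mathcal{H})$ in terms of $\bigoplus_{i+j=n} H_i(\G)\otimes H_j(\mathcal{H})$ and $\bigoplus_{i+j=n-1}\operatorname{Tor}(H_i(\G), H_j(\mathcal{H}))$. The two arithmetic inputs are $\Z_a \otimes_\Z \Z_b \cong \operatorname{Tor}(\Z_a, \Z_b) \cong \Z_{\gcd(a,b)}$ and the associativity of $\gcd$. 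Writing $d_m := \gcd(N_1, \dots, N_m)$ and $P_m := \G_{\Lambda_1} \times \cdots \times \G_{\Lambda_m}$, the inductive claim is $H_n(P_m) \cong (\Z_{d_m})^{\binom{m-1}{n}}$. For the inductive step, because $H_*(\G_{\Lambda_{m+1}})$ is concentrated in degree $0$, the K\"unneth sequence collapses to $H_n(P_{m+1}) \cong \big(H_n(P_m) \otimes \Z_{N_{m+1}}\big) \oplus \operatorname{Tor}(H_{n-1}(P_m), \Z_{N_{m+1}})$; evaluating the tensor and $\operatorname{Tor}$ groups replaces each $\Z_{d_m}$ by $\Z_{\gcd(d_m, N_{m+1})} = \Z_{d_{m+1}}$, and Pascal's identity $\binom{m-1}{n} + \binom{m-1}{n-1} = \binom{m}{n}$ gives $H_n(P_{m+1}) \cong (\Z_{d_{m+1}})^{\binom{m}{n}}$, advancing the induction. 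Setting $m = k$ yields the stated formula, with vanishing outside the range $0 \le n \le k-1$ coming from the ranges of the binomial coefficients.

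The main obstacle---indeed the only non-formal point---is the splitting of the K\"unneth sequence, since the inductive step pins down $H_n(P_{m+1})$ only up to an extension unless the sequence splits; I would invoke the splitting supplied by Matui's K\"unneth formula, which ultimately rests on the algebraic K\"unneth theorem for the complexes $C_c(P_m^{(*)}, \Z)$, all of whose terms are free abelian by Lemma~\ref{lem:Cc(X,Z) freel abelian}. A clean alternative that avoids the splitting question entirely is to work directly with the Koszul complex $D^\Lambda_*$: applying a suitable element of $\mathrm{GL}_k(\Z)$ to the generators $\varepsilon_1, \dots, \varepsilon_k$ transforms the defining vector $(N_1, \dots, N_k)$ into $(d, 0, \dots, 0)$ with $d = \gcd(N_1, \dots, N_k)$, giving $D^\Lambda_* \cong [\Z \xrightarrow{d} \Z] \otimes [\Z \xrightarrow{0} \Z]^{\otimes (k-1)}$. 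Every factor after the first then has free homology, so the K\"unneth terms carry no $\operatorname{Tor}$ contribution and no extension problem arises, and one reads off $H_n \cong (\Z_d)^{\binom{k-1}{n}}$ directly from the $\binom{k-1}{n}$ ways of selecting the degree-one slots among the $k-1$ free factors.
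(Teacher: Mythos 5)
Your argument is correct and follows essentially the same route as the paper's proof: reduce to a cartesian product of single-vertex $1$-graphs using the factorisation-independence of $D^\Lambda_*$ (Proposition~\ref{prp:homology for k-graphs}), settle $k=1$ via Corollary~\ref{cor:HK for graphs}(1), and induct with Matui's K\"unneth formula, using that the new factor's homology is concentrated in degree $0$ together with $\Z_a\otimes\Z_b\cong\operatorname{Tor}(\Z_a,\Z_b)\cong\Z_{\gcd(a,b)}$ and Pascal's identity. Your closing alternative --- changing basis by an element of $\mathrm{GL}_k(\Z)$ to turn the Koszul complex into $[\Z\xrightarrow{d}\Z]\otimes[\Z\xrightarrow{0}\Z]^{\otimes(k-1)}$ --- is a pleasant extra that sidesteps the splitting question, but the main line of reasoning matches the paper.
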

\begin{proof}
We proceed by induction. This follows from Corollary~\ref{cor:HK for graphs}(1) if $k = 1$.
Suppose it holds for $k = K-1$ and that $\Lambda$ is a $K$-graph with one vertex and at least two edges of each colour.
Since the complex $D^\Lambda_*$ is independent of the factorisation rules in $\Lambda$,
Proposition~\ref{prp:homology for k-graphs} shows that $H_*(\G_\Lambda)$ is independent
of the factorisation rules. So we can assume that $\Lambda = B_{N_1 + 1} \times \dots
\times B_{N_k + 1}$ and so $\G_\Lambda = \prod^k_{i=1} \G_{B_{N_i + 1}}$.

Write $\G = \prod^{K-1}_{i=1} \G_{B_{N_i + 1}}$ and $\mathcal{H} = \G_{B_{N_K + 1}}$ so
that $\G_\Lambda \cong \G \times \mathcal{H}$. Matui's K\"unneth theorem gives a split
exact sequence
\[
0 \to \bigoplus_{i+j= n} H_i(\G) \otimes H_j(\mathcal{H})
     \longrightarrow H_n(\G \times \mathcal{H})
     \longrightarrow \bigoplus_{i+j= n-1} \operatorname{Tor}(H_i(\G), H_j(\mathcal{H}))
     \to 0,
\]
and since $H_*(\mathcal{H}) = (\Z_{N_K}, 0, 0, \dots)$ this collapses to a split exact
sequence
\[
0 \to H_n(\G) \otimes \Z_{N_K}
     \longrightarrow H_n(\G \times \mathcal{H})
     \longrightarrow \operatorname{Tor}(H_{n-1}(\G), \Z_{N_K})
     \to 0.
\]
Since the sequence splits, the middle term is the direct sum of the two ends. Write
$\gamma := \gcd(N_1, \dots, N_{K-1})$. Then the inductive hypothesis gives $H_n(\G) =
\Z_\gamma^{^{\binom{K-2}{n}}}$ and $H_{n-1}(\G) = \Z_\gamma^{\binom{K-2}{n-1}}$. Also,
$\gcd(\gamma, N_K) = \gcd(N_1, \dots, N_K)$. As $\otimes$ and $\operatorname{Tor}$ are
both additive in the first variable and $\Z_l \otimes \Z_m = \Z_{\gcd(l,m)} =
\operatorname{Tor}(\Z_l, \Z_m)$, we have
\begin{align*}
H_n(\G \times \mathcal{H})
    &= (\Z_{\gcd(\gamma, N_K)})^{\binom{K-2}{n}} \oplus (\Z_{\gcd(\gamma, N_K)})^{\binom{K-2}{n-1}}\\
    &= (\Z_{\gcd(N_1, \dots, N_K)})^{\binom{K-2}{n} + \binom{K-2}{n-1}}\\
    &= (\Z_{\gcd(N_1, \dots, N_K)})^{\binom{K-1}{n}}.\qedhere
\end{align*}
\end{proof}

We deduce that the groupoids of $1$-vertex $k$-graphs in which there are at least two
edges of each colour, and in which $\gcd(|\Lambda^{e_1}|-1, \dots , |\Lambda^{e_k}|-1) =
1$ belong to $\M$.

\begin{corollary}
If $\Lambda$ is a single-vertex $k$-graph with at least two edges of each colour, and
$\gcd(|\Lambda^{e_1}|-1, \dots , |\Lambda^{e_k}|-1) = 1$, then $K_*(C^*(\Lambda)) =
H_*(\G_\Lambda) = 0$. In particular, if $C^*(\Lambda)$ is simple then it is isomorphic to
$\mathcal{O}_2$.
\end{corollary}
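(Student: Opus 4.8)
The plan is to derive the vanishing of both $H_*(\G_\Lambda)$ and $K_*(C^*(\Lambda))$ directly from the computation in Theorem~\ref{thm:H computation}, and then to identify the algebra under the simplicity hypothesis by appealing to classification.

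First I would dispatch the homology, which is immediate. Writing $N_i := |\Lambda^{e_i}| - 1$, the hypothesis $\gcd(|\Lambda^{e_1}|-1, \dots, |\Lambda^{e_k}|-1) = 1$ says precisely that $\gcd(N_1, \dots, N_k) = 1$, so that $\Z_{\gcd(N_1, \dots, N_k)} = \Z_1 = 0$. Substituting this into the formula of Theorem~\ref{thm:H computation} annihilates every group $(\Z_{\gcd(N_1, \dots, N_k)})^{\binom{k-1}{n}}$, giving $H_n(\G_\Lambda) = 0$ for all $n$. Since Theorem~\ref{thm:H computation} is stated for an arbitrary single-vertex $k$-graph with at least two edges of each colour, this covers the general factorisation rules with no reduction to product graphs on my part.

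For the $K$-theory the key observation is that the whole $E^2$-page of Kasparov's spectral sequence for $K_*(C^*(\Lambda))$ is assembled from exactly these homology groups. Concretely, $\G_\Lambda = \G(\Lambda^\infty, \sigma)$, and Lemma~\ref{lem:Takai duality} shows that $C^*(\Lambda) \cong C^*(\G_\Lambda)$ is stably isomorphic to $C^*(\G_\Lambda \times_c \Z^k) \rtimes_{\bar\alpha} \Z^k$, to which Kasparov's spectral sequence applies. As recorded in the discussion preceding Theorem~\ref{thm:DR k=2}, its $E^2$-page satisfies $E^2_{p,q} \cong H_p(\Z^k, K_0(C^*(\G_\Lambda \times_c \Z^k)))$ for $q$ even and vanishes for $q$ odd. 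Corollary~\ref{cor:lim homology} identifies $K_0(C^*(\G_\Lambda \times_c \Z^k))$ with $H_0(\G_\Lambda \times_c \Z^k)$ compatibly with the two $\Z^k$-actions, so Theorem~\ref{thm:main H* computation} gives $E^2_{p,q} \cong H_p(\Z^k, H_0(\G_\Lambda \times_c \Z^k)) \cong H_p(A^\sigma_*)$, which Proposition~\ref{prp:homology for k-graphs} identifies with $H_p(\G_\Lambda)$. As this is $0$ by the previous paragraph, the entire $E^2$-page vanishes, the spectral sequence collapses and converges to $0$, and hence $K_*(C^*(\Lambda)) = 0$. (Equivalently, $\G_\Lambda \in \M$, since both sides of~\eqref{conj:HK} vanish and the full and reduced algebras of a Deaconu--Renault groupoid coincide.)

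Finally I would treat the $\mathcal{O}_2$ statement. Since $\Lambda$ has a single vertex, $C^*(\Lambda)$ is unital; it is separable and nuclear and lies in the UCT class by the standard theory of $k$-graph $C^*$-algebras. Having at least two edges of each colour forces $C^*(\Lambda)$ to be infinite (it is not AF), and a simple $k$-graph $C^*$-algebra satisfies the dichotomy of being either AF or purely infinite, so under the simplicity hypothesis it is a unital Kirchberg algebra in the UCT class; its $K$-theory is trivial by the preceding paragraph, so the Kirchberg--Phillips classification theorem identifies it with $\mathcal{O}_2$. The homology vanishing is routine; the only mildly delicate internal point is bookkeeping the chain of isomorphisms identifying the $E^2$-page with $H_*(\G_\Lambda)$ and checking it respects the $\Z^k$-actions. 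The main obstacle is really the last step: the $\mathcal{O}_2$ conclusion is the sole place that reaches outside the machinery developed here, relying on the AF-versus-purely-infinite dichotomy for simple $k$-graph algebras together with Kirchberg--Phillips classification.
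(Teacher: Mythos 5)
Your proposal is correct and follows essentially the same route as the paper: vanishing of $H_*(\G_\Lambda)$ from Theorem~\ref{thm:H computation} since $\Z_{\gcd(N_1,\dots,N_k)} = \Z_1 = 0$, collapse of Kasparov's spectral sequence because its $E^2$-page is built from these vanishing homology groups, and classification of the simple case as $\mathcal{O}_2$. The only difference is that you spell out the purely-infinite/Kirchberg--Phillips argument that the paper compresses into a citation of \cite[Proposition 8.8 and Corollary 8.15]{Sims:CJM06}.
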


\begin{proof}
Theorem~\ref{thm:H computation} shows that $H_*(\G_\Lambda) = 0$. It follows that the
groups $E^2_{p,q}$ in Matui's spectral sequence are all zero. Since the terms $F^2_{p,q}$
in Kasparov's spectral sequence \cite{kasparov} (see also \cite{evans}) are given by
$E^2_{p,0}$ if $q$ is even, and 0 if $q$ is odd, we deduce that the $F^2_{p,q}$ are all
zero. So Evans' spectral sequence collapses, and we obtain $K_*(C^*(\Lambda)) = 0$ as
well. The final statement follows from \cite[Proposition 8.8 and  Corollary
8.15]{Sims:CJM06}.
\end{proof}

\begin{remark}
Unfortunately, if $\gcd(N_1, \dots, N_k) > 1$, we can conclude little new about the HK
conjecture. The problem is that in $K$-theory, with the exception of tensor products, we
only obtain from Evans' spectral sequence that the $K$-groups have filtrations of length
at most $k-1$ with subquotients equal to direct sums of copies of $\Z_{\gcd(N_1, \dots,
N_k)}$.
\end{remark}

\begin{remark}\label{rmk:non-row-finite}
The above discussion deals only with row-finite $k$-graphs. We can extend the $K$-theory
calculation to non-row-finite examples as follows. If $\Lambda$ is any 1-vertex
$k$-graph, and $\Gamma$ is a $1$-vertex $(k+1)$-graph such that $d_{\Gamma}^{-1}(\N^k)
\cong \Lambda$ and $\Gamma^{e_{k+1}}$ is infinite, then as in \cite{BCSS} we can make the
identification
\[
C^*(\Gamma)
    \cong \mathcal{T}_{\ell^2(C^*(\Lambda))_{C^*(\Lambda)}},
\]
and deduce from Pimsner's \cite[Theorem~4.4]{Pimsner} that the inclusion $C^*(\Lambda)
\hookrightarrow C^*(\Gamma)$ determines a $KK$-equivalence, so $K_*(C^*(\Gamma)) \cong
K_*(C^*(\Lambda))$.

Applying this iteratively, we can compute the $K$-theory of the $C^*$-algebra of a
$1$-vertex, not-necessarily-row-finite $k$-graph as the $K$-theory of the $C^*$-algebra
of the subgraph consisting only of those coordinates in which there are finitely many
edges.
\end{remark}

\end{document}